\newcommand{\uedge}[2]{\ensuremath{(#1,#2)}}
\def\mNode{\circ}      
\def\mLeaf{\bullet}    
\def\mEdge{\mNode-\mNode}
\def\mDEdge{\mNode\rightarrow\mNode}
\def\PGtext{PG}
\def\clsPG{\cls{\PGtext}}
\def\clsPGrl{\cls[\mLeaf]{\PGtext}}
\def\BGtext{BG}
\def\clsBG{\cls{\BGtext}}
\def\clsBGrl{\cls[\mLeaf]{\BGtext}}
\def\TCGtext{TCG}
\def\clsTCG{\cls{\TCGtext}}
\def\clsTCGrl{\cls[\mLeaf]{\TCGtext}}
\def\TTCGtext{TTCG}
\def\clsTTCG{\cls{\TTCGtext}}
\def\clsTTCGrl{\cls[\mLeaf]{\TTCGtext}}
\def\FCGtext{FCG}
\def\clsFCG{\cls{\FCGtext}}
\def\clsFCGrl{\cls[\mLeaf]{\FCGtext}}
\begin{document}


\title{Enumerations, Forbidden Subgraph Characterizations,\\%
  and the Split-Decomposition}

\author{%
  Maryam Bahrani\thanks{Department~of~Computer~Science, Princeton
    University, 35~Olden~Street, Princeton, NJ 08540, USA.
    \email{mbahrani@princeton.edu} and \email{lumbroso@cs.princeton.edu}}\and %
  J\'{e}r\'{e}mie Lumbroso\footnotemark[1]}
\date{}

\maketitle

\begin{abstract}
  Forbidden characterizations may sometimes be the most natural way to
  describe families of graphs, and yet these characterizations are usually
  very hard to exploit for enumerative purposes.
  
  By building on the work of Gioan and Paul~(2012) and Chauve et
  al.~(2014), we show a methodology by which we constrain a
  split-decomposition tree to avoid certain patterns, thereby avoiding the
  corresponding induced subgraphs in the original graph.
  
  We thus provide the grammars and full enumeration for a wide set of
  graph classes: ptolemaic, block, and variants of cactus graphs
  (2,3-cacti, 3-cacti and 4-cacti). In certain cases, no enumeration was
  known (ptolemaic, 4-cacti); in other cases, although the enumerations
  were known, an abundant potential is unlocked by the grammars we provide
  (in terms of asymptotic analysis, random generation, and parameter
  analyses, etc.).
  
  We believe this methodology here shows its potential; the natural next
  step to develop its reach would be to study split-decomposition trees
  which contain certain prime nodes. This will be the object of future
  work.
\end{abstract}


\section*{Introduction}

Many important families of graphs can be defined (sometimes exclusively)
through a \emph{forbidden graph characterization}. These characterizations
exist in several flavors:
\begin{enumerate}
\item \emph{Forbidden minors}, in which we try to avoid certain subgraphs from
  appearing after arbitrary edge contractions and vertex deletion.
\item \emph{Forbidden subgraphs}, in which we try to avoid certain subgraphs from
  appearing as subsets of the vertices and edges of a graph.
\item \emph{Forbidden induced subgraphs}, in which we try to avoid certain
  induced subgraphs from appearing (that is we pick a subset of vertices,
  and use all edges with both endpoints in that subset).
\end{enumerate}
As far as we know, while these notions are part and parcel of the work of
graph theorists, they are usually not exploited by analytic
combinatorists. For forbidden minors, there is the penetrating article of
Bousquet-Mélou and Weller~\cite{BoWe14}. For forbidden subgraphs or
forbidden induced subgraphs, we know of few papers, except because of the
simple nature of graphs~\cite{RaTh01}, or because some other, alternate
property is used instead~\cite{CaWo03}, or only asymptotics are
determined~\cite{RaTh04}.

We are concerned, in this paper, with \emph{forbidden induced subgraphs}.

\subsection*{Split-decomposition and forbidden induced subgraphs.}

Chauve~\etal~\cite{ChFuLu14} observed that relatively well-known graph
decomposition, called the \emph{split-decomposition}, could be a fruitful
means to enumerate a certain class called \emph{distance-hereditary
  graphs}, of which the enumeration had until then not been known (since
the best known result was the bound from Nakano~\etal~\cite{NaUeUn09},
which stated that there are at most $\cramped{2^{\lfloor 3.59n\rfloor}}$
unlabeled distance-hereditary graphs on $n$ vertices).

In addition, the reformulated version of this split-decomposition
introduced by Paul and Gioan, with internal graph labels, considerably
improved the legibility of the split-decomposition tree.

We have discovered, and we try to showcase in this paper, that the
split-decomposition is a very convenient tool by which to find induced
subpatterns: although various connected portions of the graphs may be
broken down into far apart blocks in the split-decomposition tree, the
property that there is an \emph{alternated path} between any two vertices
that are connected in the original graph is very powerful, and as we show
in Section~\ref{sec:forbidden} of this paper, allows to deduce constraints
following the appearance of an induced pattern or subgraph.

\subsection*{Outline of paper.}

In Section~\ref{sec:preliminaries}, we introduce all the definitions and
preliminary notions that we need for this paper to be relatively
self-contained (although it is based heavily in work introduced by
Chauve~\etal~\cite{ChFuLu14}).

In Section~\ref{sec:forbidden}, we introduced a collection of bijective
lemmas, which translate several forbidden patterns (a cycle with 4
vertices, a diamond, cliques, a pendant vertex and a bridge, all
illustrated in Figure~\ref{fig:forbidden}) into constraints on the
split-decomposition tree of a graph. In each of the subsequent setions, we
show how these constraints can be used to express a formal symbolic
grammar that describes the constrained tree---and by so doing, we obtain
grammar for the associated class of graphs.

We start by studying block graphs in Section~\ref{sec:block}, because
their structure is sufficiently constrained as to yield a relatively
simple grammar. We then study ptolemaic graphs in
Section~\ref{sec:ptolemaic} (which allows us to showcase how to use the
symbolic grammar to save ``state'' information, since we have to remember
the provenance of the hierarchy of each node to determine whether it has a
center as a starting point). And we finally look at some varieties of
cactus graphs in Section~\ref{sec:cactus}.

Finally, in Section~\ref{sec:conclusion}, we conclude and introduce
possible future directions in which to continue this work.


\begin{figure*}[p!]
  \centering
  \begin{bigcenter}
    \begin{minipage}[t]{.45\linewidth}
      \centering
      \includegraphics[scale=0.35]{./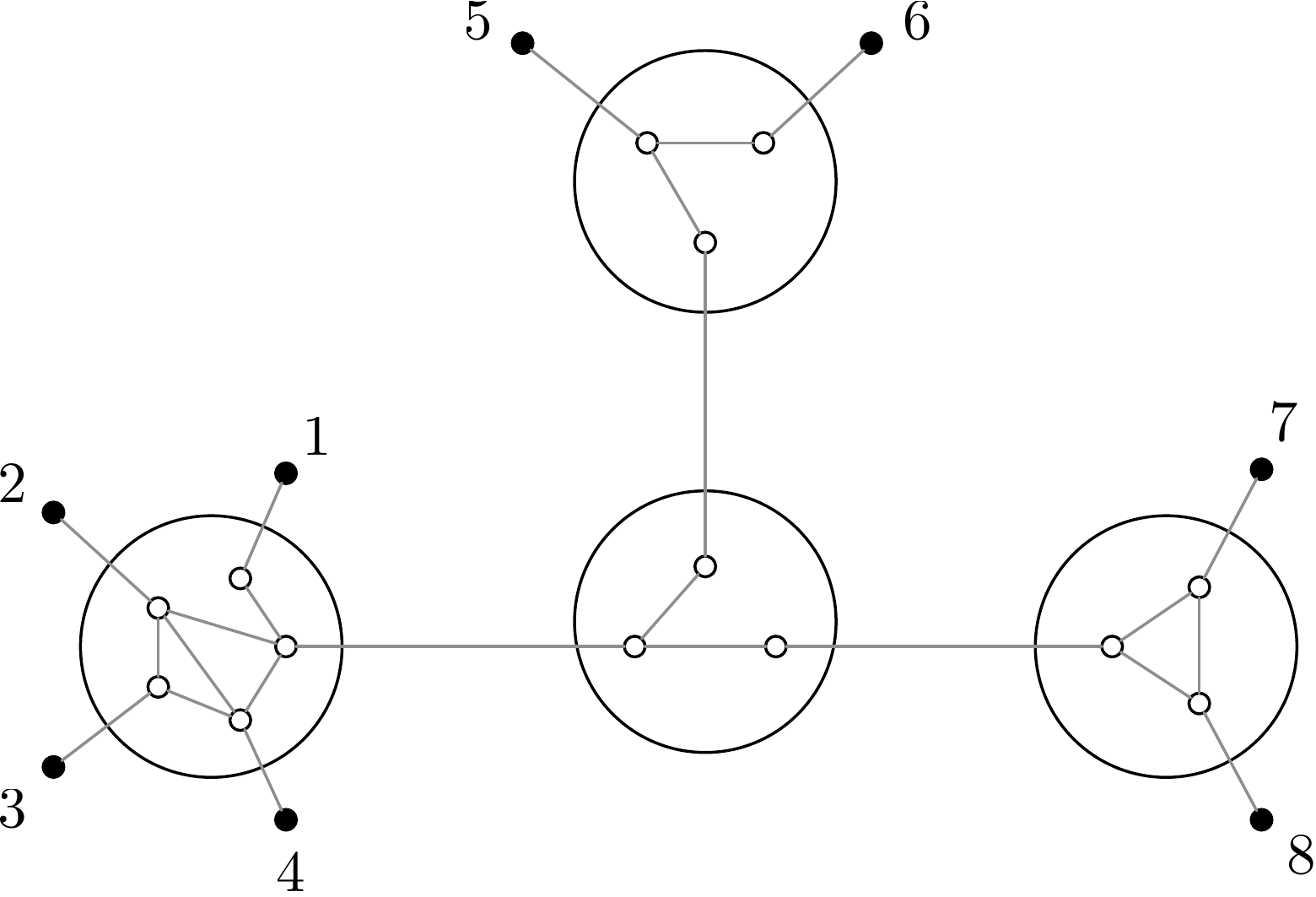}%
      \subcaption{\label{fig:ex-split-glt}%
        A graph-labeled tree.}
    \end{minipage}\hspace{2em}
    \begin{minipage}[t]{.45\linewidth}
      \centering
      \includegraphics[scale=0.35]{./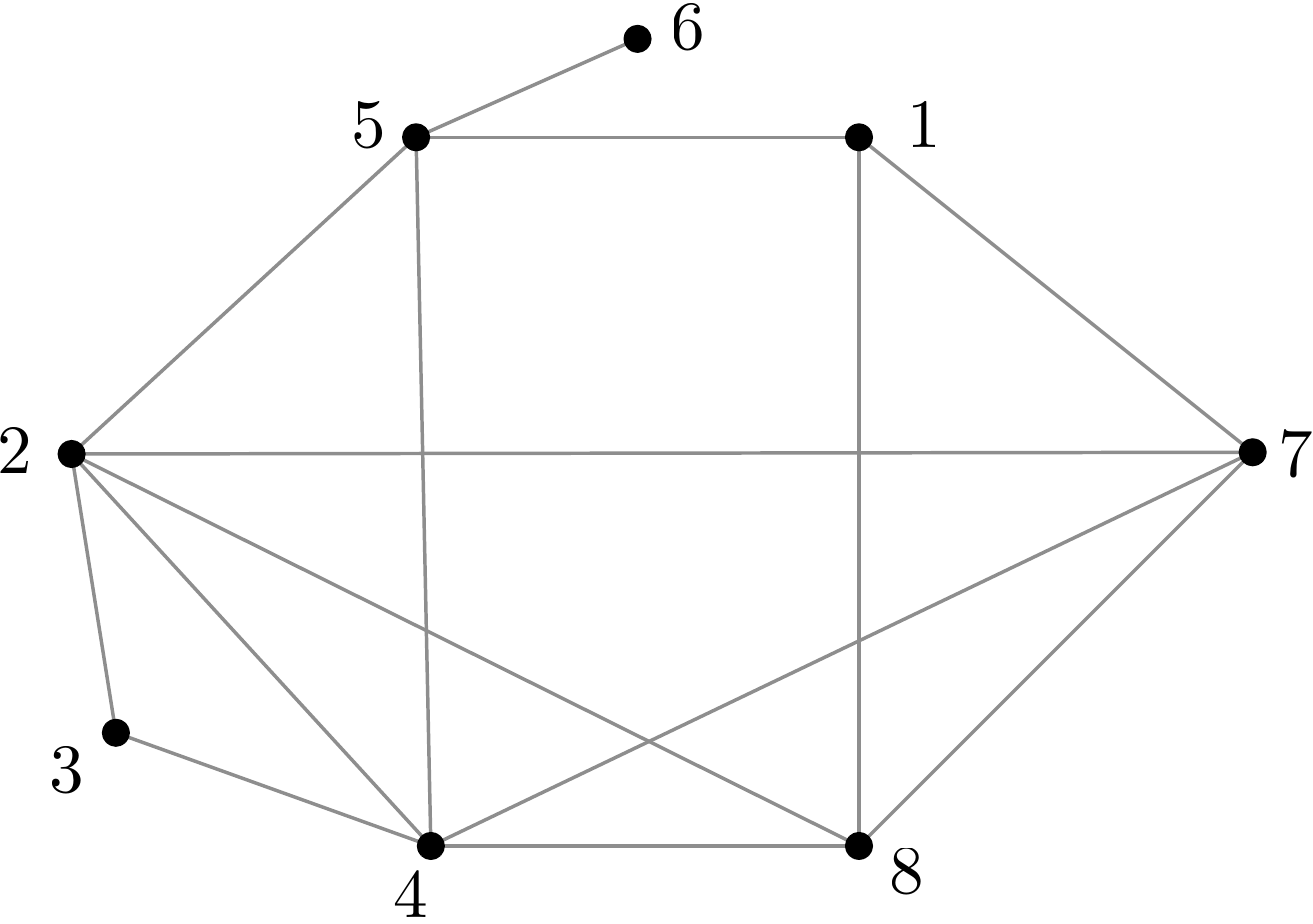}%
      \subcaption{\label{fig:ex-split-orig}%
        Original graph for (or \emph{accessibility graph} of)
        the graph-labeled tree in Figure~\ref{fig:ex-split-glt}.}
    \end{minipage}
  \end{bigcenter}
  \caption{%
    \label{fig:ex-split}%
    Two leaves of the split-decomposition graph-labeled tree (left)
    correspond to adjacent vertices in the original graph that was
    decomposed (right) if there exists an \emph{alternated path}: a path
    between those leaves, which uses at most one interior edge of any
    given graph label. For example, vertex 5 is adjacent to vertex 4 in
    the original graph, because there is an alternated path between the
    two corresponding leaves in the split-decomposition tree; vertex 5 is
    not adjacent to vertex 3 however, because that would require the path
    to take two interior edges of the (\emph{prime}) leftmost
    graph-label.}
\end{figure*}

\begin{figure*}[p!]
  \centering
  \includegraphics[scale=0.5]{./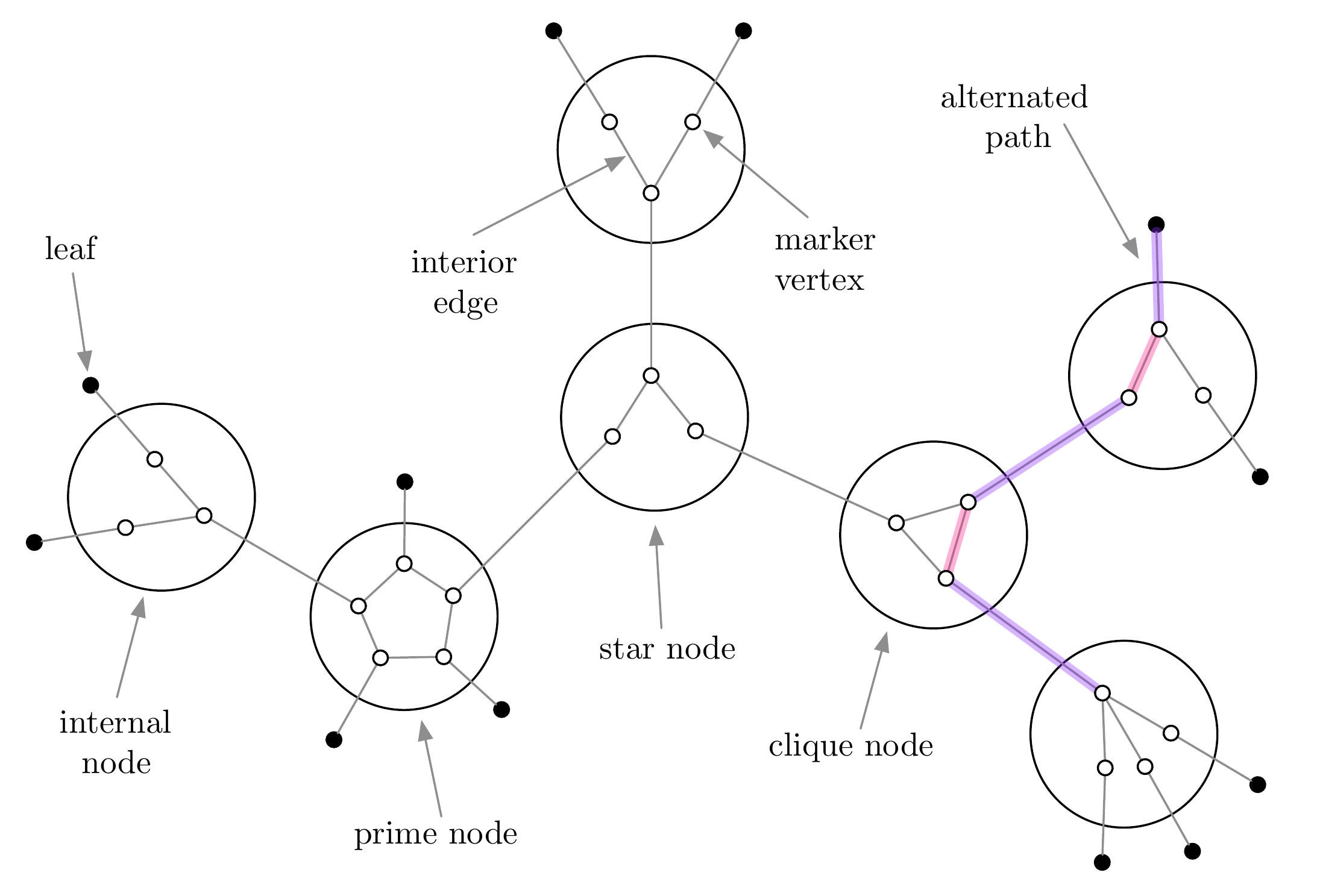}
  \caption{\label{fig:split-terminology}%
    In this figure, we present a few terms that we use a lot in this
    article.}
\end{figure*}

\begin{figure*}[p!]
  \centering
  \begin{minipage}[b]{.45\linewidth}
    \centering
    \includegraphics[scale=0.38]{./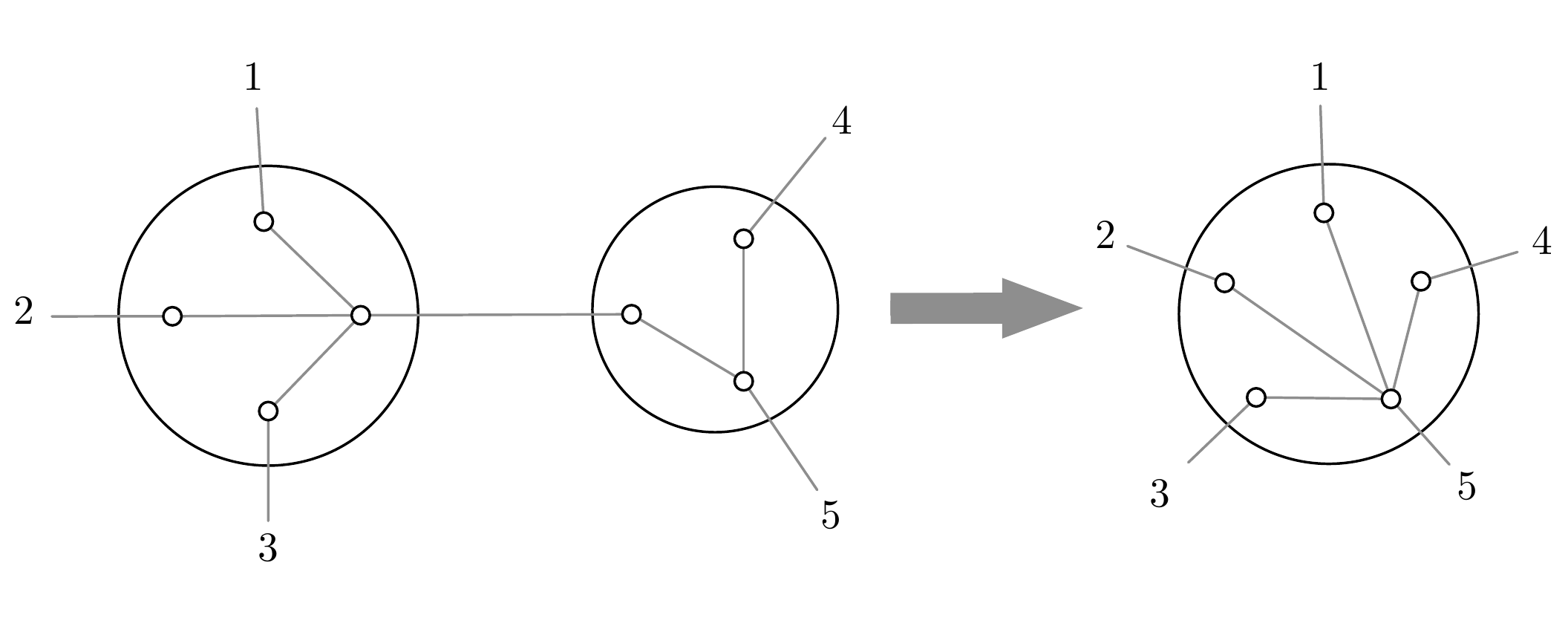}\vspace{-0.8cm}
    \subcaption{\label{fig:star-join}%
      Example of a star-join.}
  \end{minipage}\hspace{0.08\linewidth}%
  \begin{minipage}[b]{.45\linewidth}
    \centering
    \includegraphics[scale=0.38]{./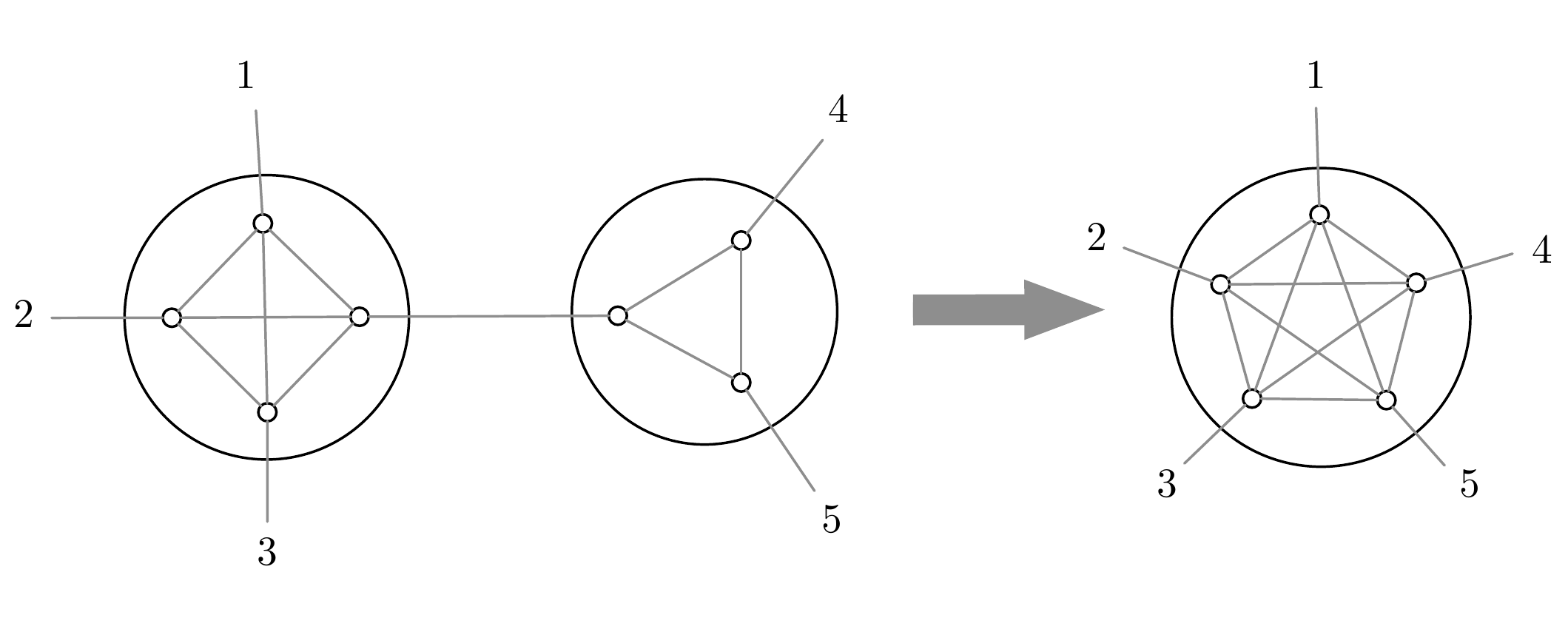}\vspace{-0.8cm}
    \subcaption{\label{fig:clique-join}%
      Example of a clique-join.}
  \end{minipage}
  \caption{\label{fig:reduced}%
    The star-join and clique-join operations result in the merging of two
    internal nodes of a split-decomposition tree. A split-decomposition
    tree in which neither one of these operations may be applied (and in
    which all non-clique and non-star nodes are prime nodes) is said to be
    \emph{reduced}.}
\end{figure*}

\section{Definitions and Preliminaries\label{sec:preliminaries}}
In this rather large section, we introduce standard definitions from graph
theory (\ref{subs:def-graph} to \ref{subs:split}) and analytic
combinatorics (\ref{subs:FS}), and then present a summary of the work of
Chauve~\etal~\cite{ChFuLu14} (\ref{subs:split-grammars}), as well as
summary of how they used the dissymmetry theorem, introduced by
Bergeron~\etal~\cite{BeLaLe98} (\ref{subs:dissymmetry}).

\subsection{Graph definitions.\label{subs:def-graph}}

For a graph $G$, we denote by $V(G)$ its vertex set and $E(G)$ its edge
set. Moreover, for a vertex $x$ of a graph $G$, we denote by $N(x)$ the
neighbourhood of $x$, that is the set of vertices $y$ such that
$\{x,y\}\in E(G)$; this notion extends naturally to vertex sets: if
$\cramped{V_1}\subseteq V(G)$, then $N(\cramped{V_1})$ is the set of
vertices defined by the (non-disjoint) union of the neighbourhoods of the
vertices in $\cramped{V_1}$. Finally, the subgraph of $G$ induced by a
subset $\cramped{V_1}$ of vertices is denoted by $G[\cramped{V_1}]$.

Given a graph $G$ and vertices $(u,v) \in \cramped{V(G)^2}$ in the same
connected component of $G$, the distance between $u$ and $v$ denoted by
$\cramped{d_G}(u, v)$ is defined as the length of the shortest path
between $u$ and $v$.

A graph on $n$ vertices is \emph{labeled} if its vertices are identified
with the set $\{1,\dots,$ $n\}$, with no two vertices having the same
label. A graph is \emph{unlabeled} if its vertices are indistinguishable.

A clique on $k$ vertices, denoted $\cramped{K_k}$ is the complete graph on
$k$ vertices (\textit{i.e.}, there exists an edge between every pair of
vertices). A star on $k$ vertices, denoted $\cramped{S_k}$, is the graph
with one vertex of degree $k-1$ (the \emph{center} of the star) and $k-1$
vertices of degree $1$ (the \emph{extremities} of the star).

\subsection{Special graph classes.\label{subs:graph-classes}}

The following two graph classes are important because they are supersets
of the classes we study in this paper.

\begin{definition}%
  \label{def:graph-dh}%
  A connected graph $G$ is \emph{distance-hereditary} if for every induced
  subgraph $H$ and every $(u,v) \in \cramped{V(H)^2}$,
  $\cramped{d_G}(u,v) = \cramped{d_H}(u,v)$.
\end{definition}

\begin{definition}%
  \label{def:graph-chordal}%
  A connected graph is \emph{chordal}, or \emph{triangulated}, or
  $\cramped{C_{\geqslant 4}}$-free, if every cycle of length at least 4
  has a chord.
\end{definition}

\subsection{Split-decomposition.\label{subs:split}}

We first introduce the notion of \emph{graph-labeled tree}, due to Gioan
and Paul~\cite{GiPa12}, then define the split-decomposition and finally
give the characterization of a \emph{reduced} split-decomposition tree,
described as a graph-labeled tree.

\begin{definition}\label{def:glt}
  A graph-labeled tree $(T,\cls{F})$ is a tree $T$ in which every internal
  node $v$ of degree $k$ is labeled by a graph
  $\cramped{G_v} \in \cls{F} $ on $k$ vertices, called \emph{marker
    vertices}, such that there is a bijection $\cramped{\rho_v}$ from the
  edges of $T$ incident to $v$ to the vertices of $\cramped{G_v}$.
\end{definition}

\noindent For example, in Figure~\ref{fig:ex-split} the internal nodes of
$T$ are denoted with large circles, the marker vertices are denoted with
small hollow circles, the leaves of $T$ are denoted with small solid
circles, and the bijection $\cramped{\rho_v}$ is denoted by each edge that
crosses the boundary of an internal node and ends at a marker vertex.

Importantly, the graph labels of these internal nodes are for convenience
alone---indeed the split-decomposition tree itself is unlabeled. However
as we will see in this paper, these graph-labeled trees are a powerful
tool by which to look at the structure of the original graph they
describe. Some elements of terminology have been summarized in
Figure~\ref{fig:split-terminology}, as these are frequently referenced in
the proofs of Section~\ref{sec:forbidden}.

\begin{definition}
  Let $(T, \cls{F})$ be a graph-labeled tree and let $\ell, \ell'\in V(T)$
  be leaves of $T$. We say that there is an \emph{al\-ter\-na\-ted path}
  between $\ell$ and $\ell'$, if there exists a path from $\ell$ to
  $\ell'$ in $T$ such that for any adjacent edges $e = \uedge{u,v}$ and
  $e' = \uedge{v,w}$ on the path,
  $\uedge{\cramped{\rho_v}(e),\cramped{\rho_v}(e')}\in E(\cramped{G_v})$.
\end{definition}

\begin{definition}%
  \label{def:split-originalgraph}%
  The \emph{original graph}, also called \emph{accessibility graph}, of a
  graph-labeled tree $(T, \cls{F})$ is the graph $G = G(T, \cls{F})$ where
  $V(G)$ is the leaf set of $T$ and, for $x, y\in V(G)$, $(x,y)\in E(G)$ iff
  $x$ and $y$ are accessible in $(T, \cls{F})$.
\end{definition}

\noindent Figures~\ref{fig:ex-split} and~\ref{fig:split-terminology}
illustrate the concept of alternated path: it is, more informally, a path
that only ever uses at most one interior edge of graph-label.

\begin{definition}%
  \label{def:split}%
  A \emph{split}~\cite{Cunningham82} of a graph $G$ with vertex set $V$ is
  a bipartition $(\cramped{V_1},\cramped{V_2})$ of $V$ (\textit{i.e.},
  $V=\cramped{V_1}\cup V_2$, $\cramped{V_1}\cap \cramped{V_2}=\emptyset$)
  such that
  \begin{enumerate}[label=(\alph*), noitemsep, nosep]
  \item $|V_1|\geqslant 2$ and $|V_2|\geqslant 2$;
  \item every vertex of $N(V_1)$ is adjacent to every of $N(V_2)$.
  \end{enumerate}
\end{definition}

\noindent A graph without any split is called a \emph{prime} graph. A
graph is \emph{degenerate} if any partition of its vertices without a
singleton part is a split: cliques and stars are the only such 
graphs.

Informally, the split-decomposition of a graph $G$ consists in finding a
split $(\cramped{V_1}, \cramped{V_2})$ in $G$, followed by decomposing $G$
into two graphs $\cramped{G_1}=G[\cramped{V_1}\cup \{\cramped{x_1}\}]$
where $\cramped{x_1}\in N(\cramped{V_1})$ and
$\cramped{G_2}=G[\cramped{V_2}\cup \{\cramped{x_2}\}]$ where
$\cramped{x_1}\in N(\cramped{V_2})$ and then recursively decomposing
$\cramped{G_1}$ and $\cramped{G_2}$. This decomposition naturally defines
an unrooted tree structure of which the internal vertices are labeled by
degenerate or prime graphs and whose leaves are in bijection with the
vertices of $G$, called a \emph{split-decomposition tree}. A
split-decomposition tree $(T,\mathcal{F})$ with $\mathcal{F}$ containing
only cliques with at least three vertices and stars with at least three
vertices is called a \emph{clique-star tree}\footnote{In this paper, we
  only consider split-decomposition trees which are clique-star trees. As
  such the family $\mathcal{F}$, to which our graph labels belong, is
  understood to only contain cliques and stars: we thus omit
  $\mathcal{F}$, and simply refer to clique-star trees as $T$.}

It can be shown that the split-decomposition tree of a graph might not be
unique (\textit{i.e.}, several sequences of decompositions of a given
graph can lead to different split-decomposition trees), but following
Cunningham~\cite{Cunningham82}, we obtain the following uniqueness result,
reformulated in terms of graph-labeled trees by Gioan and
Paul~\cite{GiPa12}.

\begin{theorem*}[Cunningham~\cite{Cunningham82}]%
  \label{thm:cunningham}%
  For every connected graph $G$, there exists a unique split-decomposition
  tree such that:
  \begin{enumerate}[label=(\alph*), noitemsep, nosep]
  \item every non-leaf node has degree at least three;
  \item no tree edge links two vertices with clique labels;
  \item no tree edge links the center of a star-node to the extremity of
    another star-node.
  \end{enumerate}
\end{theorem*}

\noindent Such a tree is called \emph{reduced}, and this theorem
establishes a one-to-one correspondence between graphs and their reduced
split-decomposition trees. So enumerating the split-decomposition trees of
a graph class provides an enumeration for the corresponding graph class,
and we rely on this property in the following sections.

Figure~\ref{fig:reduced} demonstrates the \emph{star-join} and
\emph{clique-join} operations which respectively allow trees that do not
verify conditions (b) and (c) to be further reduced---in terms of number
of internal nodes.

\begin{lemma}[Split-decomposition tree characterization of
  distance-hereditary graphs~\cite{Cunningham82, GiPa12}]%
  \label{lem:dh-split-characterization}
  A graph is \emph{distance-hereditary} if and only its
  split-decomposition tree is a clique-star tree. For this reason,
  distance-hereditary graphs are called \emph{totally decomposable} with
  respect to the split-decomposition.
\end{lemma}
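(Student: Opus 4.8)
The plan is to prove both implications using the classical characterization of distance-hereditary graphs due to Bandelt and Mulder: a connected graph is distance-hereditary if and only if it can be built from a single vertex $K_1$ by repeatedly adding a pendant vertex, a true twin, or a false twin. Each of these three operations corresponds transparently to a local feature of a clique-star tree---true twins to two leaves sharing a clique node, false twins to two leaves attached at extremities of a common star node, and pendant vertices to a leaf attached at the center of a star node---so the split-decomposition tree is the natural bookkeeping device for this build-up. (The forbidden-subgraph characterization of Howorka, by induced $C_{\geq 5}$, house, domino, and gem, would give an alternative route, but the twin/pendant version meshes better with the tree structure.)

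For the \emph{only if} direction, suppose $G$ is distance-hereditary, and assume toward a contradiction that its reduced split-decomposition tree contains a node $v$ whose label $G_v$ is prime. Since every label on a degree-$3$ node is a clique or a star, $G_v$ has at least four vertices. The first step is to establish the \emph{representative property}: by selecting one leaf in each subtree incident to $v$, chosen so that the selected leaves are pairwise joined by alternated paths exactly when the corresponding marker vertices are adjacent in $G_v$, one obtains a set of leaves whose induced subgraph in $G$ is isomorphic to $G_v$. Because distance-heredity is hereditary (closed under induced subgraphs), $G_v$ is itself distance-hereditary. But by Bandelt and Mulder every distance-hereditary graph on at least four vertices has a true twin, a false twin, or a pendant vertex, and each of these immediately yields a split; hence $G_v$ cannot be prime, a contradiction. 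Therefore no prime node occurs and the tree is a clique-star tree.

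For the \emph{if} direction, suppose the split-decomposition tree $T$ is a clique-star tree, and induct on the number of leaves. Choose an internal node $v$ that is a leaf of the subtree induced on the internal nodes; then $v$ has at least two leaf neighbours, and reading off its label shows that $G$ contains a true twin (if $v$ is a clique node), a false twin (two extremity leaves of a star node), or a pendant vertex (a leaf at the center of a star node). Deleting one such leaf, and suppressing the resulting degree-two node if necessary, produces a strictly smaller clique-star tree whose original graph is distance-hereditary by the induction hypothesis; re-adding the deleted leaf reinstates a twin or pendant vertex, an operation that preserves distance-heredity by the Bandelt--Mulder characterization. This closes the induction.

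The main obstacle is the representative property in the forward direction: one must show that the local adjacency structure recorded by a marker graph $G_v$ is faithfully reproduced as an induced subgraph of the accessibility graph $G$. This is exactly where the alternated-path definition does the work---two chosen representatives are adjacent in $G$ precisely when their incident edges at $v$ are joined in $G_v$, provided the representatives are selected coherently within each subtree (for instance, always reachable from $v$ through a marker vertex of the subtree's root lying on an alternated path). Verifying that such a coherent choice exists, and that it neither creates nor destroys edges among the representatives, is the delicate part; once it is in hand, both directions follow cleanly from the twin/pendant characterization.
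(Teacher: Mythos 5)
The paper offers no proof of this lemma: it is quoted as a known result of Cunningham and of Gioan and Paul, so there is no in-paper argument to compare yours against. Your reconstruction follows the standard route in the literature. The forward direction rests on the fact that every node label of the reduced split-decomposition tree occurs as an induced subgraph of the accessibility graph (your ``representative property,'' which is indeed the delicate step and which follows from Lemmas~\ref{lem:alternated-paths} and~\ref{lem:alternated-paths-disjoint} exactly as you indicate: pick, for each marker vertex of $v$, a leaf reached by a maximal alternated path avoiding the interior of $G_v$; two such leaves are adjacent in $G$ iff the corresponding marker vertices are adjacent in $G_v$), combined with the Bandelt--Mulder elimination scheme showing that a connected distance-hereditary graph on at least four vertices has a twin pair or a pendant vertex and hence a split, so no label can be prime. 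The converse is a clean induction that peels a leaf off an internal node all of whose other neighbours but one are leaves. Both halves are completable, subject to one correction below.

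The correction: your identification of the pendant-vertex pattern is inverted, and as written the inductive step would delete the wrong leaf. In a star-node whose center is adjacent to a leaf $a$ and one of whose extremities is adjacent to a leaf $b$, the degree-one (pendant) vertex is $b$, the \emph{extremity} leaf, and $a$ is its unique neighbour --- this is precisely the content of Lemma~\ref{lem:split-characterization-pendant}. You state twice that the pendant vertex is ``a leaf attached at the center of a star node.'' If the induction deletes the center leaf $a$, the surgery fails: the star label loses its center, the extremity subtrees become mutually inaccessible, and $G - a$ may even be disconnected (every extremity leaf of that star is pendant on $a$). Deleting the extremity leaf $b$ instead shrinks the star by one extremity, possibly down to degree two, after which the suppression you describe goes through and the induction closes. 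With that swap, and with the representative-property verification written out, the argument is correct.
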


\subsection{Decomposable structures.\label{subs:FS}}

In order to enumerate classes of split-decomposition trees, we use the
framework of decomposable structures, described by Flajolet and
Sedgewick~\cite{FlSe09}. We refer the reader to this book for details and
outline below the basic idea.

We denote by $\clsAtom$ the combinatorial family composed of a single
object of size $1$, usually called \emph{atom} (in our case, these refer
to a leaf of a split-decomposition tree, \textit{i.e.}, a vertex of the
corresponding graph).

Given two disjoint families $\cls{A}$ and $\cls{B}$ of combinatorial
objects, we denote by $\cls{A} + \cls{B}$ the \emph{disjoint union} of the
two families and by $\cls{A} \times \cls{B}$ the \emph{Cartesian product}
of the two families.

Finally, we denote by $\Set{\cls{A}}$ (resp.
$\cramped{\Set[\geqslant k]{\cls{A}}}$, $\cramped{\Set[k]{\cls{A}}}$) the
family defined as all sets (resp. sets of size at least $k$, sets of size
exactly $k$) of objects from ${\cls{A}}$, and by
$\cramped{\Seq[\geqslant k]{\cls{A}}}$, the family defined as all
sequences of at least $k$ objects from ${\cls{A}}$.

\subsection{Split-decomposition trees expressed symbolically.%
  \label{subs:split-grammars}}

While approaching graph enumeration from the perspective of tree
decomposition is not a new idea (the recursively decomposable nature of
trees makes them well suited to enumeration), Chauve~\etal~\cite{ChFuLu14}
brought specific focus to Cunningham's split-decomposition.

Their way of describing constrained split-decomposition trees with
decomposable grammars is the starting point of this paper, so we briefly
outline their method here.

\paragraph{Example} Let us consider the split-decomposition tree of
Figure~\ref{fig:ex-split-glt}, and illustrate how this
tree\footnote{Figure~\ref{fig:ex-split-glt} is not a clique-star tree
  because it contains a prime node---the leftmost internal node that does
  not have any splits. We illustrate the method for this more general
  split-decomposition tree, noting that the process would be identical in
  the case of a clique-star tree.} can be expressed recursively as a
rooted tree.

Suppose the tree is rooted at vertex 5. Assigning a root immediately defines
a direction for all tree edges, which can be thought of as oriented away from
the root. Starting from the root, we can set out to traverse the tree in the
direction of the edges, one internal node at a time.

We start at the root, vertex 5. The first internal node we encounter
is a star-node, and since we are entering it from the star's center, we
have to describe what is on each of its two remaining extremities. On one
of the extremities there is a leaf, 6; on the other, there is another
split-decomposition subtree, of which the first internal node we
encounter happens to be another star-node.

This time, we enter the star-node through one of its extremity. So we must
describe what is connected to its center and its remaining extremities (of
which there is only one).

Both of these are connected to smaller split-decomposition trees: the
extremity is connected itself to a clique-node, which we enter through one
of its undistinguished edges (leaving the two other to go to leaves, 7 and
8); the center of the star-node is connected to a prime node, and so on.

\paragraph{Grammar description} Now, to describe this tree symbolically,
let's consider the rule for star-nodes (assuming we are, unlike in the
tree of Figure~\ref{fig:ex-split-glt}, in a clique-star tree that has no
prime internal nodes). First assume like at the beginning of our example,
that we enter a star-node through its center: we have to describe what the
extremities can be connected to.

According to Cunningham's Theorem: we know that there are at least two
extremities (since every non-leaf node has degree at least three); and we
know that the star-node's extremities \emph{cannot} be connected to the
center of another star-node. We call $\cramped{\cls[C]{S}}$ a
split-decomposition tree that is traversed starting at a star-node entered
through its center. We have
\begin{align*}
  \cls[C]{S} = \Set[\geqslant 2]{\clsAtom + \cls{K} + \cls[X]{S}}
\end{align*}
because indeed, we have at least two extremities, which are not
ordered---so $\cramped{\Set[\geqslant 2]{\ldots}}$---and each of these
extremities can either lead to a leaf, $\clsAtom$, a clique-node entered
through any edge, $\cls{K}$, and a star-node \emph{entered through one of
  its extremities}, $\cramped{\cls[X]{S}}$.

For a star-node entered through its extremity, we have a similar
definition, with a twist,
\begin{align*}
  \cls[X]{S}  &= \left(\clsAtom +\cls{K}+\cls[C]{S}\right)\times
                \Set[\geqslant 1]{\clsAtom + \cls{K} + \cls[X]{S}}
\end{align*}
because the center---which can lead to a leaf, $\clsAtom$, a clique-node,
$\cls{K}$, or a star-node entered through its center,
$\cramped{\cls[C]{S}}$---is distinct from the extremities (which, from the
perspective of the star-node itself, are undistinguishable). We thus
express the subtree connected through the center as separate from those
connected through the extremities: this is the reason for the Cartesian
product (rather than strictly using non-ordered constructions such as
$\Set$).

\paragraph{Conventions} As explained above, we use rather similar
notations to describe the combinatorial classes that arise from
decomposing split-decomposition trees. These notations are summarized in
Table~\ref{tab:symbols}, and the most frequently used are:
\begin{itemize}[noitemsep]
\item $\cls{K}$ is a clique-node entered through one of its edges;
\item $\cramped{\cls[C]{S}}$ is a star-node entered through its center;
\item $\cramped{\cls[X]{S}}$ is a star-node entered through one of its
  extremities.
\end{itemize}

\noindent Furthermore because we provide grammars for tree classes that
are both rooted and unrooted, we use some notation for clarity. In
particular, we use $\cramped{\clsAtom_{\bullet}}$ to denote the
\emph{rooted vertex}, although this object does not differ in any way from
any other atom $\clsAtom$.

\paragraph{Terminology} In the rest of this paper, we describe the
combinatorial class $\cramped{\cls[X]{S}}$ as representing a ``\emph{a
  star-node entered through an extremity}'', but others may have alternate
descriptions: such as ``\emph{a star-node linked to its parent by an
  extremity}''; or such as Iriza~\cite{Iriza15}, ``\emph{a star-node with
  the subtree incident to one of its extremities having been
  removed}''---all these descriptions are equivalent (but follow different
viewpoints).

\subsection{The dissymmetry theorem.\label{subs:dissymmetry}}

All the grammars produced by this methodology are \emph{rooted} grammars:
the trees are described as starting at a root, and branching out to
leaves---yet the split-decomposition trees are not rooted, since they
decompose graphs which are themselves not rooted.

If we were limiting ourselves to \emph{labeled} objects\footnote{Labeled
  objects are composed of atoms (think of atoms as being vertices in a
  graph, or leaves in a tree) that are each uniquely distinguished by an
  integer between 1 and $n$, the size of the object; each of these integer
  is called a \emph{label}.}, it would be simple to move from a rooted
object to an unrooted one, because there are exactly $n$ ways to root a
tree with $n$ labeled leaves. But because we allow the graphs (and
associated split-decomposition trees) to be \emph{unlabeled}, some
symmetries make the transition to unrooted objects less straightforward.

While this problem has received considerable attention since
P\'olya~\cite{Polya37, PoRe87}, Otter~\cite{Otter48} and
others~\cite{HaUh53}, we choose to follow the lead of
Chauve~\etal~\cite{ChFuLu14}, and appeal to a more recent result, the
\emph{dissymmetry theorem}. This theorem was introduced by
Bergeron~\etal~\cite{BeLaLe98} in terms of ordered and unordered pairs of
trees, and was eventually reformulated in a more elegant manner, for
instance by Flajolet and Sedgewick~\cite[VII.26 p.~481]{FlSe09} or
Chapuy~\etal~\cite[\S 3]{ChFuKaSh08}. It states
\begin{align}\label{eq:dissymmetry}
  \cls{A} + {\cls{A}}_{\mDEdge} \simeq
  {\cls{A}}_{\mNode} + {\cls{A}}_{\mEdge}
\end{align}
where $\cls{A}$ is the unrooted class of trees, and
$\cramped{{\cls{A}}_{\mNode}}$, $\cramped{{\cls{A}}_{\mEdge}}$,
$\cramped{{\cls{A}}_{\mDEdge}}$ are the rooted classes of trees
respectively where only the root is distinguished, an edge from the root
is distinguished, and a directed, outgoing edge from the root is
distinguished. The proof is straightforward, see Drmota~\cite[\S 4.3.3,
p.~293]{HoECDrmota15}, and involves the notion of \emph{center} of a tree.


\begin{figure*}
  \centering
  \begin{minipage}[t]{.25\linewidth}
    \centering
    \includegraphics[scale=0.5]{./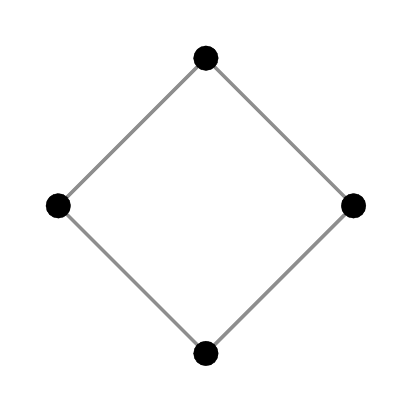}
    \subcaption{A $\cramped{C_4}$ (cycle with 4 vertices).}
  \end{minipage}
  \begin{minipage}[t]{.25\linewidth}
    \centering
    \includegraphics[scale=0.5]{./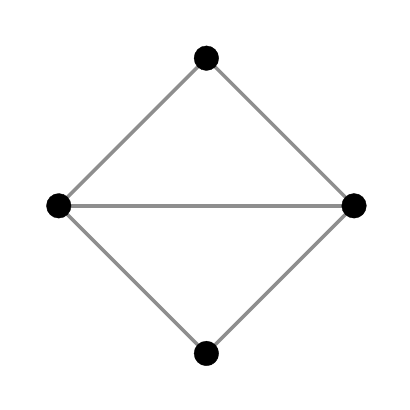}
    \subcaption{A diamond.}
  \end{minipage}\vspace{1em}\\
  \begin{minipage}[b]{.25\linewidth}
    \centering
    \includegraphics[scale=0.5]{./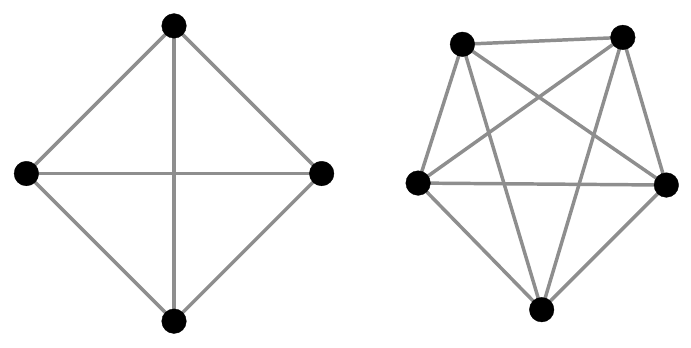}
    \subcaption{Cliques (here, of size 4 and 5).}
  \end{minipage}
  \begin{minipage}[b]{.25\linewidth}
    \centering
    \includegraphics[scale=0.5]{./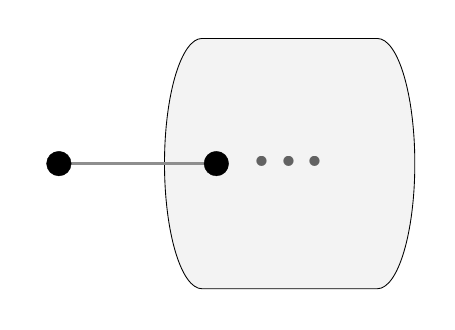}%
    \subcaption{A pendant vertex.}
  \end{minipage}
  \begin{minipage}[b]{.25\linewidth}
    \centering
    \includegraphics[scale=0.5]{./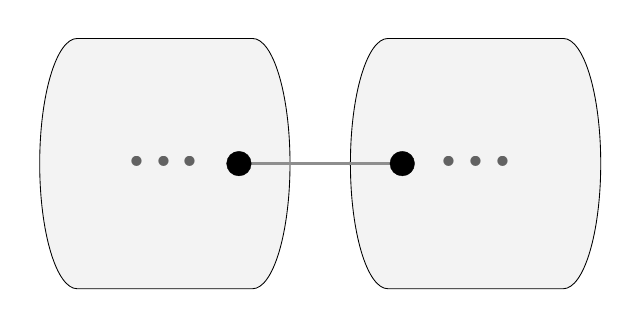}%
    \subcaption{A bridge.}
  \end{minipage}
  \caption{%
    \label{fig:forbidden}%
    These are the induced forbidden subgraphs that we investigate in this
    paper. In Section~\ref{sec:forbidden}, we introduce a series of lemmas
    that characterize the split-decomposition tree of a (totally
    decomposable) graph which avoids one or some of these induced
    subgraphs.}
\end{figure*}

For more details on the dissymmetry theorem, see Chauve~\etal~\cite[\S 2.2
and \S 3]{ChFuLu14}. We will content ourselves with some summary remarks:
\begin{itemize}
\item The process of applying the dissymmetry theorem involves
  \emph{rerooting} the trees described by a grammar in every possible way.
  Indeed, the trees obtained from our methodology will initially be rooted
  at their \emph{leaves}. For the dissymmetry theorem, we re-express the
  grammar of the tree in all possible ways it can be rooted.
\item A particularity of the dissymmetry theorem is that in this rerooting
  process, we can completely ignore leaves~\cite[Lemma~1]{ChFuLu14}, as
  the effect of doing this cancels out in the subtraction of
  Eq.~\eqref{eq:dissymmetry}:

  \begin{lemma}[Dissymmetry theorem leaf-invariance~\cite{ChFuLu14}]%
    \label{lem:no-leaves}%
    In the dissymmetry theorem for trees, when rerooting at the nodes (or
    atoms) of a combinatorial tree-like class $\cls{A}$, leaves can be
    ignored.
  \end{lemma}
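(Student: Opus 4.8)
The plan is to start from the dissymmetry identity~\eqref{eq:dissymmetry} and to split each of the three pointed classes on both sides according to whether the distinguished node (resp.\ edge, directed edge) touches a leaf or lies entirely among internal nodes. The crucial structural fact I would exploit is that in a clique-star tree every leaf has exactly one incident edge, and that edge always joins it to an internal node (no edge joins two leaves). Writing $\mLeaf$ for a leaf-root and $\mINode$ for an internal-node-root, this gives the disjoint decompositions
\[
  {\cls{A}}_{\mNode} \simeq {\cls{A}}_{\mLeaf} + {\cls{A}}_{\mINode},
  \qquad
  {\cls{A}}_{\mEdge} \simeq {\cls{A}}_{\mLEdge} + {\cls{A}}_{\mINode-\mINode},
\]
and, splitting directed edges by orientation and endpoint type,
\[
  {\cls{A}}_{\mDEdge} \simeq {\cls{A}}_{\mLDEdge} + {\cls{A}}_{\mDLEdge} + {\cls{A}}_{\mDIEdge},
\]
where ${\cls{A}}_{\mDIEdge}$ collects the directed edges between two internal nodes.

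Next I would record the two bijections that make the leaf terms coincide. Because a leaf determines, and is determined by, its unique incident edge, marking a leaf is the same as marking that leaf--internal edge, so ${\cls{A}}_{\mLeaf} \simeq {\cls{A}}_{\mLEdge}$. Likewise each undirected leaf--internal edge carries exactly one orientation of each type, whence ${\cls{A}}_{\mLDEdge} \simeq {\cls{A}}_{\mLEdge} \simeq {\cls{A}}_{\mDLEdge}$. Substituting the decompositions into~\eqref{eq:dissymmetry} and collecting the leaf-touching summands, the left-hand side contributes ${\cls{A}}_{\mLDEdge} + {\cls{A}}_{\mDLEdge} \simeq {\cls{A}}_{\mLEdge} + {\cls{A}}_{\mLEdge}$, while the right-hand side contributes ${\cls{A}}_{\mLeaf} + {\cls{A}}_{\mLEdge} \simeq {\cls{A}}_{\mLEdge} + {\cls{A}}_{\mLEdge}$. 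These are isomorphic, so they cancel, leaving
\[
  \cls{A} + {\cls{A}}_{\mDIEdge} \simeq {\cls{A}}_{\mINode} + {\cls{A}}_{\mINode-\mINode},
\]
which is exactly the dissymmetry identity with only internal nodes (and internal--internal edges) distinguished: the leaves have been ignored, as claimed.

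The step I expect to be the main obstacle is arguing that this cancellation is genuinely an isomorphism of unlabeled combinatorial classes, and not merely an equality of counts. Concretely, I would check that the map sending a leaf to its incident edge, and an undirected leaf-edge to each of its two orientations, is canonical---it involves no arbitrary choice---so that it is equivariant under the automorphism group of the underlying unrooted tree and therefore descends to a well-defined isomorphism at the level of unlabeled structures. Once this equivariance is in hand, the isomorphism passes to the associated generating series, and the leaf-terms drop out of the dissymmetry computation verbatim, which justifies the practice of ignoring leaves when rerooting.
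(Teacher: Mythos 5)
Your argument is correct and is exactly the cancellation that the paper alludes to (``the effect of doing this cancels out in the subtraction of Eq.~\eqref{eq:dissymmetry}''): splitting each pointed class by whether the distinguished node or edge touches a leaf, using the canonical bijection between a leaf and its unique incident edge (and its two orientations), and cancelling the leaf-touching terms on both sides. The only caveat, which does not affect the classes used here, is that the bijection $\cls{A}_{\mLeaf}\simeq\cls{A}_{\mLEdge}$ presupposes the tree has an internal node, so the degenerate one- and two-leaf trees must be treated separately.
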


\item In terms of notation, we systematically refer to $\cls[\omega]{T}$
  as trees re-rooted in a node (or edge) of type $\omega$. Often these
  rerooted trees present the distinct characteristic that, unlike the
  trees described in the rooted grammars, they are not ``missing a
  subtree.'' Thus the combinatorial class $\cramped{\cls[S]{T}}$ refers to
  a split-decomposition tree (of some graph family) that is rerooted at
  star-nodes: in this context, we must account both for the center, and at
  least two extremities.

\item This is a relatively simple theorem to apply; the downside is that
  it only yields an equality of the coefficient, but it loses the symbolic
  meaning of a grammar. This is a problem when using the tools of analytic
  combinatorics~\cite{FlSe09}, in particular those having to do with
  random generation~\cite{FlZiVa94, DuFlLoSc04, FlFuPi07}.
\item An alternate tool to unroot combinatorial classes,
  \emph{cycle-pointing}~\cite{BoFuKaVi11}, does not have this issue: it is
  a combinatorial operation (rather than algebraic one), and it allows for
  the creation of random samplers for a class. However it is more complex
  to use, though Iriza~\cite{Iriza15} has already applied it to the
  distance-hereditary and 3-leaf power grammars of
  Chauve~\etal~\cite{ChFuLu14}.
\end{itemize}


\section{Characterization \& Forbidden Subgraphs\label{sec:forbidden}}

In this section, we provide a set of bijective lemmas that characterize
the split-decomposition tree of a graph that avoids any of the forbidden
induced subgraphs of Figure~\ref{fig:forbidden}.

\subsection{Elementary lemmas.}

We first provide three simple lemmas, which essentially have to do with
the fact that the split-decomposition tree is a \emph{tree}. Their proofs
are provided in Appendix~\ref{app:proof-tree-lemmas}, and notably are
still valid in the presence of prime nodes (\textit{i.e.}, these
elementary lemmas would still apply to a split-decomposition tree that
while reduced, is not purely a clique-star tree---even though those are
the only trees that we work with in the context of this paper).

\begin{lemma}\label{lem:alternated-paths}%
  Let $G$ be a totally decomposable graph with the reduced clique-star
  split-decomposition tree $T$, any maximal\footnote{A maximal alternated path is one
    that cannot be extended to include more edges while remaining
    alternated.} alternated path starting from any node in $V(T)$ ends in
  a leaf.
\end{lemma}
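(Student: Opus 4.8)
The plan is to show that an alternated path can always be continued at an internal node, so that the only vertex at which a path can fail to be extendable---and hence the only place a maximal path can terminate---is a leaf. First I would isolate the two ways the growing end of a path could stop: either it reaches a leaf, which has degree one and offers no further edge, or it reaches an internal node $v$ through some incident edge $e$ but admits no legal continuation. I then argue that the second situation never occurs.

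So suppose the terminal end of a maximal alternated path sits at an internal node $v$, reached via the tree edge $e$. Continuing the path means choosing a second edge $e'$ incident to $v$, distinct from $e$, with $\uedge{\rho_v(e)}{\rho_v(e')} \in E(G_v)$. Because $T$ is a tree, every edge of $v$ other than $e$ leads into a subtree not yet visited by the path, so any such $e'$ yields a simple path that remains alternated (the adjacency condition at $v$ is exactly what we require). The existence of a continuation is therefore equivalent to the marker vertex $\rho_v(e)$ having at least one neighbour in the graph label $G_v$.

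The crux is that in a clique-star tree this neighbour is always present. Since $T$ is reduced, Cunningham's theorem forces every internal node to have degree at least three, so $G_v$ is a clique $K_k$ or a star $S_k$ with $k \geq 3$. In a clique every vertex is adjacent to the other $k-1 \geq 2$ vertices, so $\rho_v(e)$ has a neighbour no matter which marker it is. In a star the center is adjacent to all $k-1$ extremities while each extremity is adjacent to the center, so again $\rho_v(e)$ has a neighbour whether it is the center or an extremity. In every case a continuing edge $e'$ exists, contradicting maximality; hence a maximal alternated path cannot end at an internal node and must end at a leaf.

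I expect the only delicate point to be bookkeeping rather than genuine difficulty: one must state carefully that maximality refers to the terminal endpoint and that, because $T$ is acyclic, the candidate continuation $e'$ never revisits a node, leaving the local adjacency in $G_v$ as the sole possible obstruction. Once this is pinned down, the connectivity of cliques and stars---equivalently, the absence of isolated marker vertices---does all the work. This also explains why, as the surrounding discussion notes, the statement persists even in the presence of prime labels, since those graph labels are likewise connected.
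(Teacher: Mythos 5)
Your proof is correct and follows essentially the same route as the paper's: the paper likewise assumes a maximal alternated path ends at an internal node $v$, splits into the three cases (clique label, star label entered at the center, star label entered at an extremity), exhibits a neighbour of the entry marker vertex in $G_v$ in each case, and extends the path to contradict maximality. Your observation that the argument reduces to the absence of isolated marker vertices in the graph labels (and hence survives connected prime labels) is a nice distillation of the same mechanism.
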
 

\begin{lemma}\label{lem:alternated-paths-disjoint}%
  Let $G$ be a totally decomposable graph with the reduced clique-star
  split-decomposition tree $T$ and let $u\in V(T)$ be an internal node. Any two maximal
  alternated paths $P$ and $Q$ that start at distinct marker vertices of
  $u$ but contain no interior edges from $\cramped{G_u}$ end at distinct
  leaves.
\end{lemma}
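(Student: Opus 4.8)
The plan is to reduce this to a purely tree-theoretic observation, using that $T$ is a tree together with the hypothesis that neither path traverses an interior edge of $G_u$.

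First I would unpack the hypotheses in terms of the tree $T$. Recall from Definition~\ref{def:glt} that $\rho_u$ is a bijection from the tree edges incident to $u$ to the marker vertices of $G_u$. Hence the two distinct marker vertices at which $P$ and $Q$ start correspond to two \emph{distinct} tree edges $e_P \neq e_Q$ incident to $u$. Since neither path contains an interior edge of $G_u$, the only way for $P$ to leave $u$ is along $e_P$ (the alternative, taking an edge of $G_u$ to another marker vertex, is excluded by hypothesis), and likewise $Q$ must leave $u$ along $e_Q$. Thus $P$ and $Q$ depart $u$ through different edges.

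Next I would exploit the tree structure. Deleting $u$ from $T$ splits it into connected components, one incident to each tree edge at $u$; write $T_P$ and $T_Q$ for the components reached through $e_P$ and $e_Q$. As $e_P \neq e_Q$, these components are distinct and therefore vertex-disjoint. The key step is then to show that each path stays inside its own component. Being a path in the tree $T$, $P$ is simple and so visits $u$ only once; after leaving $u$ it cannot come back, because the sole tree edge from $T_P$ to $u$ is $e_P$ (already used, hence unavailable to a simple path), and re-entering $u$ at a different marker vertex would require an interior edge of $G_u$, which is forbidden. Consequently $P$ remains within $T_P$ after its first step, and symmetrically $Q$ remains within $T_Q$.

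Finally, Lemma~\ref{lem:alternated-paths} guarantees that the maximal alternated paths $P$ and $Q$ each terminate at a leaf. The terminal leaf of $P$ lies in $T_P$ and that of $Q$ lies in $T_Q$; since $T_P$ and $T_Q$ are disjoint, these leaves are distinct, as claimed. I expect the confinement argument to be the only delicate point: it is precisely there that the assumption ``no interior edges from $G_u$'' is indispensable, and I would take care to rule out both ways a path could otherwise escape its component --- retracing its entering edge (impossible for a simple path), and re-entering $u$ through the interior of $G_u$ (impossible by hypothesis).
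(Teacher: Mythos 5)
Your proof is correct and follows essentially the same route as the paper's: both use the hypothesis that neither path takes an interior edge of $G_u$ to show $P$ and $Q$ leave $u$ through distinct tree edges, then invoke the acyclicity of $T$ to conclude the paths are disjoint, and finish with Lemma~\ref{lem:alternated-paths} to get distinct terminal leaves. The only cosmetic difference is that you phrase the disjointness via the components of $T \setminus \{u\}$, whereas the paper argues that a common node would close a cycle in $T$; these are two phrasings of the same tree-theoretic fact.
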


\begin{lemma}\label{lem:split-induced-clique}%
  Let $G$ be a totally decomposable graph with the reduced clique-star
  split-decomposition tree $T$. If $T$ has a clique-node of degree $n$, then $G$ has a
  corresponding induced clique on (at least) $n$ vertices.
\end{lemma}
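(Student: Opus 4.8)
The plan is to exhibit $n$ specific leaves of $T$ and show they are pairwise adjacent in the original graph $G$, which immediately makes the induced subgraph on them a clique. Let $u$ be the clique-node of degree $n$, so its label $\cramped{G_u}$ is the clique $\cramped{K_n}$ on $n$ marker vertices $\cramped{m_1},\dots,\cramped{m_n}$, with incident tree edges $\cramped{e_1},\dots,\cramped{e_n}$ where $\cramped{\rho_u}(\cramped{e_i})=\cramped{m_i}$. First I would, for each $i$, consider the maximal alternated path $\cramped{P_i}$ that starts by leaving $u$ along the tree edge $\cramped{e_i}$. By Lemma~\ref{lem:alternated-paths}, each such maximal alternated path terminates at a leaf, which I call $\cramped{\ell_i}$. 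Since a path in a tree is simple and each $\cramped{P_i}$ departs $u$ immediately and never returns, none of the $\cramped{P_i}$ traverses an interior edge of $\cramped{G_u}$; as they start at distinct marker vertices of $u$, Lemma~\ref{lem:alternated-paths-disjoint} guarantees that $\cramped{\ell_1},\dots,\cramped{\ell_n}$ are pairwise distinct.

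Next I would establish pairwise adjacency. Fix $i\neq j$. Because the edges $\cramped{e_i}$ and $\cramped{e_j}$ lead into subtrees of $T$ that meet only at $u$, the concatenation of the reverse of $\cramped{P_i}$, the single interior edge $\uedge{\cramped{m_i}}{\cramped{m_j}}$ of $\cramped{G_u}$, and the path $\cramped{P_j}$ is a simple path $\cramped{R_{ij}}$ in $T$ from $\cramped{\ell_i}$ to $\cramped{\ell_j}$ passing through $u$ exactly once. I would then verify that $\cramped{R_{ij}}$ is alternated: at every internal node other than $u$ the alternation condition is inherited verbatim from $\cramped{P_i}$ or $\cramped{P_j}$, and at $u$ the two incident tree edges on $\cramped{R_{ij}}$ are precisely $\cramped{e_i}$ and $\cramped{e_j}$, whose marker vertices satisfy $\uedge{\cramped{\rho_u}(\cramped{e_i})}{\cramped{\rho_u}(\cramped{e_j})}=\uedge{\cramped{m_i}}{\cramped{m_j}}\in E(\cramped{G_u})$ since $\cramped{G_u}$ is a clique. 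Hence $\cramped{\ell_i}$ and $\cramped{\ell_j}$ are accessible, so $\uedge{\cramped{\ell_i}}{\cramped{\ell_j}}\in E(G)$ by Definition~\ref{def:split-originalgraph}.

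It then follows that $\cramped{\ell_1},\dots,\cramped{\ell_n}$ are pairwise adjacent, so $G[\{\cramped{\ell_1},\dots,\cramped{\ell_n}\}]$ is an induced clique on $n$ vertices (the ``at least'' accounting for the fact that these vertices may extend to a larger clique in $G$), which is the claim.

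The step I expect to be the main obstacle is the verification that $\cramped{R_{ij}}$ is a genuine alternated path, and in particular that it uses exactly one interior edge of $\cramped{G_u}$ rather than violating alternation there. This is where the clique hypothesis is essential and is used in precisely the right place: the all-pairs adjacency of the marker vertices of $\cramped{K_n}$ is exactly what licenses crossing $u$ from any $\cramped{e_i}$ to any $\cramped{e_j}$. The remaining bookkeeping—that the $\cramped{P_i}$ reach leaves, that these leaves are distinct, and that the concatenation stays simple—is handled cleanly by Lemmas~\ref{lem:alternated-paths} and~\ref{lem:alternated-paths-disjoint} together with the tree structure, so no separate combinatorial argument is needed.
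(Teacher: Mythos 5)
Your proposal is correct and follows essentially the same argument as the paper: take maximal alternated paths out of each of the $n$ marker vertices of the clique-node, invoke Lemmas~\ref{lem:alternated-paths} and~\ref{lem:alternated-paths-disjoint} to get $n$ distinct leaves, and join each pair by concatenating the two paths with the interior edge of the clique label. Your extra care in verifying the alternation condition at the clique-node is a welcome elaboration of a step the paper states more tersely, but it is not a different approach.
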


\subsection{Forbidden subgraphs lemmas}


\begin{definition}\label{def:center-center}%
  Let $G$ be a totally decomposable graph with the reduced clique-star
  split-decomposition tree $T$. A \emph{center-center path} in $T$ is an alternated path
  $P$, such that the endpoints of $P$ are centers of star-nodes
  $(u, v) \in \cramped{V(T)^2}$ and $P$ does not contain any interior edge
  of either star-node.
\end{definition}

\begin{lemma}[Split-decomposition tree characterization of $\cramped{C_4}$-free graphs] %
  \label{lem:split-characterization-c4} %
  Let $G$ be a totally decomposable graph with the reduced clique-star
  split-decomposition tree $T$. $G$ does not have any induced $\cramped{C_4}$ if and
  only if $T$ does not have any center-center paths.
\end{lemma}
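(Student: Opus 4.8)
The plan is to prove the contrapositive equivalence, namely that $T$ \emph{has} a center-center path if and only if $G$ \emph{has} an induced $C_4$, and to establish each implication by reasoning about how alternated paths interact with the center/extremity structure of star-nodes together with the fact that $T$ is a tree (so the path realizing any edge, being a simple path in $T$, is \emph{unique}).

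For the direction assuming a center-center path $P$, let its endpoints be the centers $c_u, c_v$ of star-nodes $u, v$; these are distinct since $T$ is acyclic, and each has at least two extremities because every internal node has degree at least three. I would select two extremities of $u$ and two of $v$ and, using Lemma~\ref{lem:alternated-paths}, push a maximal alternated path out of each chosen extremity into its hanging subtree, terminating at four leaves $a, c$ (off $u$) and $b, d$ (off $v$); Lemma~\ref{lem:alternated-paths-disjoint} makes $a \neq c$ and $b \neq d$, and all four are distinct since they lie in subtrees separated by $P$. The crucial check is that the concatenation \emph{leaf}~$\to$~extremity of $u$, then the center interior-edge of $u$, then $P$, then the center interior-edge of $v$, then extremity of $v$~$\to$~\emph{leaf}, is alternated: at $u$ one turns from an extremity marker to the center marker (adjacent in a star), $P$ is alternated and contains no interior edge of $u$ or $v$ by hypothesis, and symmetrically at $v$. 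This produces the edges $ab, ad, cb, cd$, while $a \not\sim c$ because the unique tree path between $a$ and $c$ turns extremity-to-extremity at $u$ (non-adjacent markers of a star), and likewise $b \not\sim d$ at $v$; hence $\{a,b,c,d\}$ induces exactly a $C_4$.

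For the converse, suppose $a,b,c,d$ induce a $C_4$ with edges $ab, bc, cd, da$ and non-edges $ac, bd$. I would analyze the \emph{quartet topology} of these four leaves in $T$: either all four meet at one node, or the spanning subtree has two branch nodes joined by a path, realizing one of $ab|cd$, $ac|bd$, $ad|bc$. The single-node case is impossible, since the four incident markers would induce a $C_4$ inside one clique- or star-label, which neither admits. For the two splits whose non-edges are \emph{crossing} ($ab|cd$ and, symmetrically, $ad|bc$), I would show each branch node is forced to be a clique: at each one, three relevant markers are pairwise adjacent (read off from three of the four $C_4$-edges), and a star contains no triangle of markers. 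But then, since the connecting segment is alternated (it coincides with the middle of a genuine edge-path such as the one realizing $a\sim d$), the path realizing a supposed non-edge such as the one between $a$ and $c$ has no violation at either clique branch node nor in between, so it is alternated, contradicting $a \not\sim c$.

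This leaves the split $ac|bd$, whose non-edges are \emph{same-side}. At the branch node $p$ on the $\{a,c\}$ side, the marker pointing along the connecting path is adjacent to both the $a$-marker and the $c$-marker (from edges $ab$ and $cb$), while the $a$- and $c$-markers are mutually non-adjacent (from non-edge $ac$); a marker adjacent to two mutually non-adjacent markers can only be the center of a star, forcing $p$ to be a star-node with center pointing along the path and $a, c$ on extremities, and symmetrically for $q$ with $b, d$. The $p$-$q$ segment is then alternated (again being the middle of the alternated path realizing $a\sim b$) and uses no interior edge of $p$ or $q$, so it is precisely a center-center path. The main obstacle I anticipate is exactly this converse: the bookkeeping of which marker-adjacencies each $C_4$-edge and each non-edge imposes at each branch node, and the careful treatment of intermediate nodes on the connecting segment so that ``both branch nodes are cliques'' genuinely forces the crossing non-edge path to be alternated; the forward direction is comparatively routine once the concatenated path is verified node by node.
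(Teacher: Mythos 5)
Your proof is correct. The forward direction is essentially identical to the paper's: both extract two leaves from extremities of each star via maximal alternated paths (using Lemma~\ref{lem:alternated-paths-disjoint} for distinctness), splice the center-center path into four alternated leaf-to-leaf paths, and rule out the two remaining pairs by observing that the unique tree path between two extremity-leaves of the same star must turn through two interior edges of that star's label. In the converse, however, you organize the argument genuinely differently. The paper constructs the two star-nodes directly as the branching points of specific pairs of alternated paths ($P_{a,c}$ against $P_{a,d}$, then $P_{a,c}$ against $P_{b,c}$), argues each branching node is a star entered at its center because the two exit markers cannot be adjacent, and finally establishes the center-center property by a contradiction involving the fourth edge $(b,d)$. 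You instead classify the Steiner topology of the four leaves (a single branch node, or one of the three quartet splits), eliminate the single-node case and the two ``crossing'' splits by showing they would force an alternated tree path realizing a forbidden diagonal, and read the center-center path off the one surviving split. Both arguments rest on the same local observations --- a marker adjacent to two mutually non-adjacent markers must be a star center, and the tree path realizing an edge is unique and hence alternated along its entire length --- but your exhaustive case analysis makes explicit what the paper's closing proof-by-contradiction handles implicitly (namely, that the two branching nodes cannot sit on a common side of the quartet), at the cost of a longer case enumeration; the paper's version is shorter but asks the reader to verify that the two branching nodes it names are the only configurations that need to be excluded.
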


\begin{proof}
  \noindent {[$\Rightarrow$]}~~Let $T$ be a clique-star tree with a center-center
  path $P$ between the centers of two star-nodes $u,v\in V(T)$; we will
  show that the accessibility graph $G(T)$ has an induced $\cramped{C_4}$.
    
  Let $\cramped{c_u}\in\cramped{G_u}$ and $\cramped{c_v}\in\cramped{G_v}$
  be the endpoints of $P$. Since $T$ is assumed to be a reduced split-decomposition tree
  (Theorem~\ref{thm:cunningham}), $u$ and $v$ have degree at least three
  and thus $\cramped{G_u}$ and $\cramped{G_u}$ have at least two
  extremities. Therefore, there are at least two maximal alternated paths
  out of $u$ (resp. $v$), each beginning at an extremity of
  $\cramped{G_u}$ (resp. $\cramped{G_v}$) and not using any interior edges
  of $\cramped{G_u}$ (resp. $\cramped{G_v}$). By
  Lemma~\ref{lem:alternated-paths-disjoint}, these paths end at distinct
  leaves $a,b\in V(T)$ (resp. $c,d\in V(T)$), as shown in
  Figure~\ref{fig:split-subpattern-C4}.
    
  Now consider the accessibility graph $G$ of $T$. First, we observe that
  the pairs\footnote{Out of what is, perhaps, notational abuse, we refer
    to both vertices of the accessibility graph, and leaves of the
    split-decomposition tree as the same objects.} $(a,c)$, $(a,d)$,
  $(b,c)$, $(b,d)$ all belong to the edge set of $G$. We will show this
  for the edge $(a,c)$ by extending $P$ into an alternated path in $T$
  from $a$ to $c$. The argument extends symmetrically to the other three
  edges.
    
  Let $\cramped{P_a}$ be the alternated path between $a$ and an extremity
  of $\cramped{G_u}$, and let $\cramped{P_c}$ be the alternated path
  between $c$ and an extremity of $\cramped{G_v}$. To show
  $(a,c)\in E(G)$, we extend $P$ into the following alternated path:
  \begin{align*}
    P_a, c_u, P, c_v, P_c
  \end{align*}
    
  We next observe that $(a,b)$ and symmetrically $(c,d)$ cannot belong to
  the edge set of G. Since $T$ is a tree, there is a unique path in $T$
  between $a$ and $b$, which passes through $u$. This unique path must use
  two interior edges within $\cramped{G_u}$ and therefore cannot be
  alternated. Consequently, $(a,b)\not\in E(G)$. It can be shown by a
  similar argument that $(c,d)\not\in E(G)$. Therefore, the induced
  subgraph of $G$ consisting of $a,b,c,d$ is a $\cramped{C_4}$ illustrated
  in Figure~\ref{fig:split-subpattern-C4}.\medskip
      
  \begin{figure*}
    \centering
    \includegraphics[scale=0.5]{./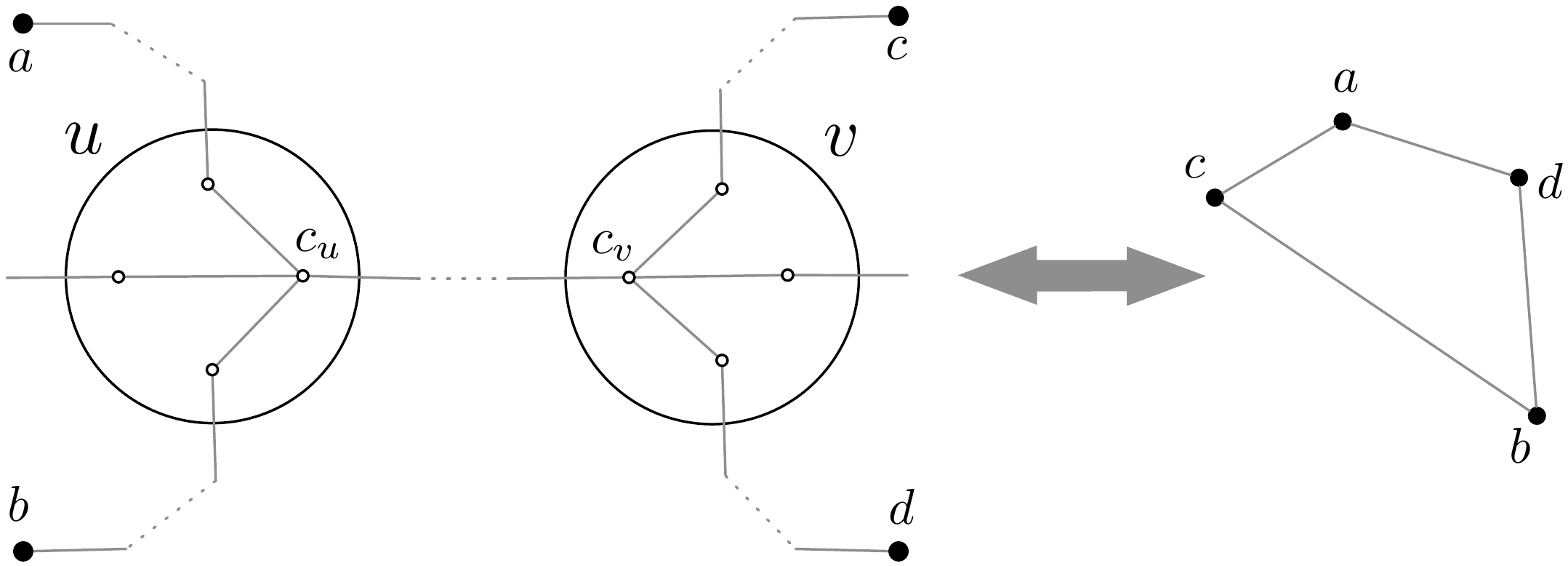}
    \caption{\label{fig:split-subpattern-C4}%
      A center-center path in a split-decomposition tree translates to an induced
      $\cramped{C_4}$ in the corresponding accessibility graph, see
      Lemma~\ref{lem:split-characterization-c4}.}
  \end{figure*}
  
  
  \noindent {[$\Leftarrow$]}~~Let $G$ be a totally decomposable graph with
  an induced $\cramped{C_4}$ with its vertices arbitrarily labeled
  $(a,c,b,d)\in V(G)$ as in Figure~\ref{fig:split-subpattern-C4}. We will
  show that the reduced split-decomposition tree $T$ of $G$ has a center-center path.
     
  First, we will show that there is a star-node $v\in V(T)$ that has
  alternated paths out of its extremities ending in $c$ and $d$. Since
  $(a,c),(a,d)\in E(G)$, there must exist alternated paths
  $\cramped{P_{a,c}}$ and $\cramped{P_{a,d}}$, which begin at the leaf $a$
  and end at the leaf $c$ or $d$ respectively. Let $v\in V(T)$ be the
  internal node that both $\cramped{P_{a,c}}$ and $\cramped{P_{a,d}}$
  enter via the same edge $\cramped{\rho_{c_v}}$ but exit via different
  edges $\cramped{\rho_{x_c}}$ and $\cramped{\rho_{x_d}}$ respectively. We
  claim that $v$ must be a star-node, such that $\cramped{c_v}$ is its
  center and $\cramped{x_c}$ and $\cramped{x_d}$ are two of its
  extremities. It is sufficient to show
  $(\cramped{x_c},\cramped{x_d})\not\in E(\cramped{G_v})$, which is indeed
  true because otherwise, we could use that edge and the disjoint parts of
  $\cramped{P_{a,c}}$ and $\cramped{P_{a,d}}$ to construct the alternated
  path between $c$ and $d$, contradicting the fact that
  $(c,d)\not\in E(G)$.
     
  Next, we will show that there is a star-node $u\in V(T)$ that has
  alternated paths out of its extremities ending in $a$ and $b$ and forms
  a center-center path with $v$. Consider this time the alternated path
  $\cramped{P_{b,c}}$ between leaves $b$ and $c$, as well as
  $\cramped{P_{a,c}}$ defined above. Similar to the argument above, let
  $u\in V(T)$ be the internal node that both $\cramped{P_{a,c}}$ and
  $\cramped{P_{b,c}}$ enter via the same edge $\cramped{\rho_{c_u}}$ but
  exit via different edges $\cramped{\rho_{x_a}}$ and
  $\cramped{\rho_{x_b}}$ respectively. With the same argument outlined
  above, $u$ must be a star-node, such that $\cramped{c_u}$ is its center
  and $\cramped{x_a}$ and $\cramped{x_b}$ are two of its extremities. It
  remains to show that $u$ and $v$ form a center-center path.

  Suppose $u$ and $v$ do not form a center-center path. Then $u$ must be
  on the common part of $\cramped{P_{a,c}}$ and $\cramped{P_{b,c}}$
  between $\cramped{x_c}$ and $c$. However, in this case, both $u$ and $v$
  lie on the unique path $\cramped{P_{b,d}}$ in $T$ between $b$ and $d$,
  in such a way that $\cramped{P_{b,d}}$ must use two interior edges of
  both $\cramped{G_u}$ and $\cramped{G_v}$, which is a contradiction since
  $(b,d)\in E(G)$ (see
  Figure~\ref{fig:split-subpattern-center-center-C4}).

  \begin{figure*}
    \centering
    \includegraphics[scale=0.5]{./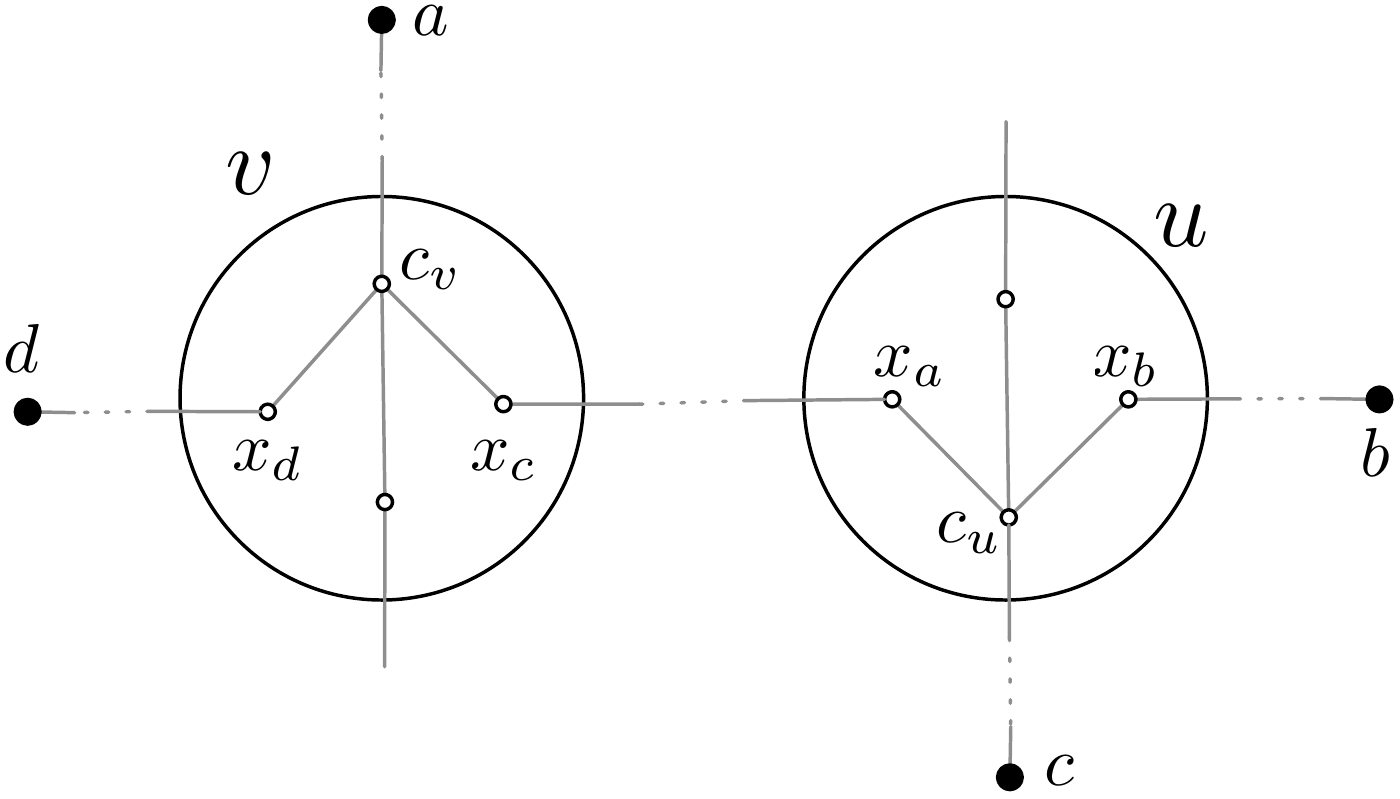}
    \caption{\label{fig:split-subpattern-center-center-C4}%
      A graph that has an induced $\cramped{C_4}$ subgraph on vertices
      $a,c,b,d$ must have a center-center path. In this figure, the star
      $u$ is where $\cramped{P_{a,c}}$ and $\cramped{P_{b,c}}$ branch
      apart, and the star-node $v$ is where $\cramped{P_{a,c}}$ and
      $\cramped{P_{a,d}}$ branch apart. If $u$ and $v$ do not form a
      center-center path, $b$ and $d$ cannot be adjacent in the
      accessibility graph $G$.}
  \end{figure*}
\end{proof}

\begin{remark}
  Importantly, a \emph{center-center path} is defined as being an
  alternated path between the centers of two star-nodes, as reflected in
  Figure~\ref{fig:split-subpattern-center-center-C4}. In this manner, the
  definition excludes the possibility that, somewhere on the path between
  the $\cramped{c_u}$ and $\cramped{c_v}$ marker vertices, there is a star
  (or for that matter a prime node) which \emph{breaks} the alternating
  path---in the sense that it requires taking at least two interior edges.
  
  But while the definition excludes it, it is a very real possibility to
  keep in mind when decomposing the grammar of the tree. As we will see in
  Section~\ref{sec:ptolemaic} on ptolemaic graphs, specifically for the
  case of the clique-node $\cls{K}$, we may need to engineer the grammar
  in such a way that it keeps track of whether a path between two nodes is
  alternated (or not).
\end{remark}


\begin{definition}%
  \label{clique-center}%
  Let $G$ be a totally decomposable graph with the reduced clique-star
  split-decomposition tree $T$. A \emph{clique-center path} in $T$ is an alternated path
  $P$, such that the endpoints of $P$ are the center of a star-node
  $u\in V(T)$ and a marker vertex of a clique-node $v\in V(T)$ and $P$
  does not contain any interior edge of the clique-node or the star-node.
\end{definition}

\begin{lemma}[split-decomposition tree characterization of diamond-free graphs]
  \label{lem:split-characterization-diamond}%
  Let $G$ be totally decomposable graph with the reduced clique-star
  split-decomposition tree $T$. $G$ does not have any induced diamonds if and only if
  $T$ does not have any induced clique-center paths.
\end{lemma}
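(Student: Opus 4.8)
The plan is to mirror closely the proof of Lemma~\ref{lem:split-characterization-c4}, establishing both directions through their contrapositives: a clique-center path produces an induced diamond, and an induced diamond forces a clique-center path. Throughout I label the diamond so that $a,b$ are its two degree-$3$ vertices (hence $(a,b)\in E(G)$, the shared edge) and $c,d$ its two degree-$2$ vertices (hence $(c,d)\notin E(G)$), with the four remaining pairs $(a,c),(a,d),(b,c),(b,d)$ all being edges. The diamond is thus a $\cramped{C_4}$ on $a,c,b,d$ with the single extra chord $(a,b)$, and this is exactly the structural difference I expect to exploit: replacing one of the two star-nodes of the $\cramped{C_4}$ characterization by a clique-node.

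For the direction clique-center path $\Rightarrow$ induced diamond, I would start from a clique-center path $P$ (Definition~\ref{clique-center}) joining the center $\cramped{c_u}$ of a star-node $u$ to a marker vertex $m_v$ of a clique-node $v$, with $P$ using no interior edge of either. Since $T$ is reduced, $u$ and $v$ have degree at least three; so, exactly as in the $\cramped{C_4}$ argument, $u$ has two extremities whose maximal alternated paths reach distinct leaves $c,d$ (distinctness by Lemma~\ref{lem:alternated-paths-disjoint}), and $v$ has two further marker vertices whose maximal alternated paths reach distinct leaves $a,b$. Because these four paths leave $u$ and $v$ in directions away from $P$, the four leaves are distinct. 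I would then certify each adjacency by exhibiting a single alternated path: $(a,b)$ is an edge because the two markers reaching $a,b$ are joined by one interior edge of the clique $v$; each cross-pair such as $(a,c)$ is an edge via the concatenation of an alternated path to a marker of $v$, one interior clique edge to $m_v$, then $P$, then one interior star edge from $\cramped{c_u}$ to an extremity, then the alternated path down to $c$; and $(c,d)$ is a non-edge since the unique tree path between $c$ and $d$ traverses two interior edges of $\cramped{G_u}$. This yields precisely a diamond.

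For the converse I would work from an induced diamond and recover the clique-center path in two stages. First, using $\cramped{P_{a,c}}$ and $\cramped{P_{a,d}}$ together with $(c,d)\notin E(G)$, the identical argument as in Lemma~\ref{lem:split-characterization-c4} produces a star-node $v$ with center $\cramped{c_v}$ and extremities $\cramped{x_c},\cramped{x_d}$ leading to $c$ and $d$. Second, I would consider $\cramped{P_{a,c}}$ and $\cramped{P_{b,c}}$, which both terminate at $c$; letting $u$ be the node where they branch, they enter $u$ through a common marker $m_u$ and leave through distinct markers $\cramped{x_a},\cramped{x_b}$, so $(m_u,\cramped{x_a})$ and $(m_u,\cramped{x_b})$ lie in $E(\cramped{G_u})$. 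Here the diamond departs from the $\cramped{C_4}$: since $(a,b)\in E(G)$, the unique (hence alternated) tree path from $a$ to $b$ forces $(\cramped{x_a},\cramped{x_b})\in E(\cramped{G_u})$, so $m_u,\cramped{x_a},\cramped{x_b}$ form a triangle and $u$ must be a clique-node, stars being triangle-free.

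It remains to glue the two stages, and this is where I expect the main obstacle. I would show that the sub-path of $\cramped{P_{a,c}}$ running from $m_u$ to $\cramped{c_v}$ is a clique-center path: it is alternated, it has a clique marker and a star center as its endpoints, and it contains no interior edge of $u$ (the turn at $u$ lies outside the sub-path) nor of $v$ (the turn $\cramped{c_v}\to\cramped{x_c}$ lies outside it). The delicate point is the \emph{ordering} of $u$ and $v$ along $\cramped{P_{a,c}}$: I must confirm that $u$ sits on the center side of $v$, so that the connecting markers are exactly $m_u$ and $\cramped{c_v}$. I expect to establish this through a short case analysis showing that $b$ lies in the subtree hanging off the center $\cramped{c_v}$ of $v$; were $b$ below some extremity of $v$ instead, one of $(b,c)$ or $(b,d)$ would demand a path using two interior edges of $\cramped{G_v}$, contradicting the diamond's edges. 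Given this, $\cramped{P_{b,c}}$ enters $v$ through $\cramped{c_v}$ and shares the segment from $\cramped{x_c}$ down to $c$ with $\cramped{P_{a,c}}$, which forces the branch point $u$ onto the center side and completes the clique-center path.
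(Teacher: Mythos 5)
Your proposal is correct and follows essentially the same route as the paper: the forward direction builds the diamond from the clique-center path exactly as in the $\cramped{C_4}$ lemma (with the clique's interior edge supplying the chord), and the backward direction identifies the star-node and clique-node as branch points of the same pairs of alternated paths (modulo your swapped labeling of the degree-2 and degree-3 vertices), concluding with the same style of case analysis on the diamond's adjacencies to force the connecting segment onto the star's center.
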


\begin{proof}
  \noindent {[$\Rightarrow$]}~~Let $T$ be a clique-star tree containing a
  clique-center path $P$ between the center of a star-node $u\in V(T)$ and
  a marker vertex of a clique-node $v\in V(T)$. We will show that the
  accessibility graph $G(T)$ has an induced diamond.
      
  Let $\cramped{c_u}\in\cramped{G_u}$ and $\cramped{c_v}\in\cramped{G_v}$
  be the endpoints of $P$. By an argument similar to the one in the proof
  of Lemma~\ref{lem:split-characterization-c4}, it follows from
  Lemma~\ref{lem:alternated-paths-disjoint} that there must be at least
  two disjoint maximal alternated paths out of $u$, each beginning at an
  extremity of $\cramped{G_u}$ and ending at leaves $a,b\in V(T)$.
  Similarly, there must be at least two disjoint maximal alternated paths
  out of the clique-node $v$ ending at leaves $c,d\in V(T)$
  (Figure~\ref{fig:split-subpattern-diamond}).

  We can now show that this clique-center path translates to an induced
  diamond in the accessibility graph $G$ of $T$. Given this established
  labeling of the leaves $a,b,c,d$ and internal nodes $u,v$, the exact
  same argument outlined in the proof of
  Lemma~\ref{lem:split-characterization-c4} directly applies here, showing
  that $(a,c), (a,d), (b,c), (b,d) \in E(G)$. Similarly, it can be shown
  that $(a,b)\not\in E(G)$.
   
  Where this proof diverges from the proof of
  Lemma~\ref{lem:split-characterization-c4} is in the existence of the
  edge $(c,d)\in E(G)$. This is easy to show: Let
  $\cramped{x_c},\cramped{x_d}\in V(\cramped{G_v})$ be the marker vertices
  of the clique-node $v$ that mark the end points of the paths out of $v$
  to the leaves $c$ and $d$ respectively\footnote{We chose here to use the
    same notation as the proof of
    Lemma~\ref{lem:split-characterization-c4} in referring to marker
    vertices of the clique-node $v$ by names that might be reminiscent of
    the center and extremities of a star-node. This notation is not meant
    to imply that $v$ is a star-node, but rather aims to highlight the
    parallelism between the two proofs, hinting at the ease by which our
    methods can be generalized to derive split-decomposition tree characterizations for
    different classes of graphs defined in terms of forbidden subgraphs.}.
  We have $(c,d)\in E(G)$ by tracing the following alternated path:
  $c, \cramped{x_c}, \cramped{x_d}, d$ (see
  Figure~\ref{fig:split-subpattern-diamond}).\medskip
      
  \begin{figure*}
    \centering
    \includegraphics[scale=0.5]{./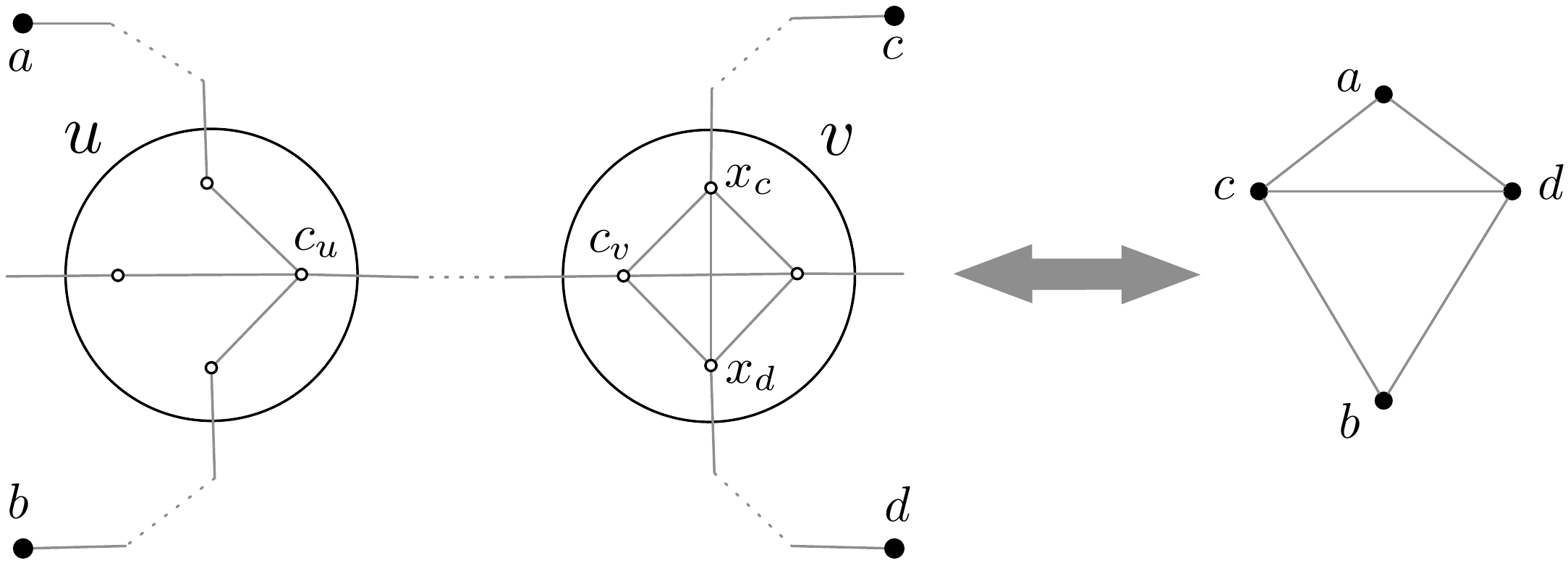}
    \caption{\label{fig:split-subpattern-diamond}%
      A clique-center path in a split-decomposition tree translates to an induced
      diamond in the accessibility graph.}
  \end{figure*}


  \noindent {[$\Leftarrow$]}~~Let $G$ be a totally decomposable graph with
  an induced diamond on vertices $(a,c,b,d)\in V(G)$ labeled as
  illustrated in Figure~\ref{fig:split-subpattern-diamond}. We need to
  show that the reduced split-decomposition tree $T$ of $G$ has a clique-center path.
     
  It can be shown by a similar argument to the proof of
  Lemma~\ref{lem:split-characterization-c4} that there must exist a star
  node $u\in V(T)$ that has alternated paths $\cramped{P_{a,c}}$ and
  $\cramped{P_{b,c}}$ out of its extremities ending in $a$ and $b$
  respectively. Let $\cramped{c_u}\in\cramped{G_u}$ be the center of this
  star-node.
     
  Similarly, we can show that there is a clique-node out of which maximal
  alternated paths lead to $c$ and $d$. Let $\cramped{P_{a,d}}$ be the
  unique path in $T$ between leaves $a$ and $d$, and consider the node
  $v\in V(T)$ where $\cramped{P_{a,c}}$ and $\cramped{P_{a,d}}$ branch
  apart. Let $\cramped{c_v}\in V(\cramped{G_v})$ denote the marker vertex
  in common between the two paths, and let
  $\cramped{x_c}, \cramped{x_d}\in V(\cramped{G_v})$ be the marker
  vertices out of which $\cramped{P_{a,c}}$ and $\cramped{P_{a,d}}$ exit
  $v$ respectively. Since $(c,d)\in E(G)$, there must be an alternated
  path in $T$ between $c$ and $d$ that uses at most one interior edge from
  $\cramped{G_v}$, so we must have
  $(\cramped{x_c}, \cramped{x_d})\in E(\cramped{G_v})$. Therefore,
  $\cramped{G_v}$ has an induced $\cramped{K_3}$ on the marker vertices
  $\cramped{x_c}$, $\cramped{x_d}$, and $\cramped{c_v}$. Since $T$ is a
  clique-star tree and $v$ cannot be a star-node, it has to be a clique
  node.
     
  Finally, we need to show that $u$ and $v$ form a clique-center path.
  This is indeed the case since, if the path $P$ between $u$ and $v$
  connected to either extremity of $\cramped{G_u}$, one of the following
  cases would occur:
  \begin{itemize}[noitemsep, nosep]
  \item $P$ connects to the extremity of $u$ ending in $a$, which implies
    $(b,c),(b,d)\not\in E(G)$;
  \item $P$ connects to the extremity of $u$ ending in $b$, which implies
    $(a,c),(a,d)\not\in E(G)$;
  \item $P$ connects to another extremity of $u$ (if one exists), which
    implies $(a,c),(a,d),(b,c),(b,d)\not\in E(G)$.
  \end{itemize}
  
  \noindent Since all the above cases contradict the fact that
  $\set{a,b,c,d}$ induces a diamond in $G$, $P$ must be a clique-center
  path between $u$ and $v$.
\end{proof}



\begin{lemma}[Split-decomposition tree characterization of graphs without induced cliques on
  4 (or more) vertices]%
  \label{lem:split-characterization-K4}%
  Let $G$ be totally decomposable graph with the reduced clique-star
  split-decomposition tree $T$. $G$ does not contain any induced
  $\cramped{K_{\geqslant4}}$ subgraphs if and only if $T$ does not have:
  \begin{itemize}[noitemsep, nosep]
  \item any clique-nodes of degree 4 or more;
  \item any alternated paths between different clique-nodes.
  \end{itemize}
\end{lemma}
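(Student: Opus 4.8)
The plan is to prove both implications separately, treating the forward direction as a short consequence of the earlier lemmas and reserving the substantive work for the converse. The two recurring tools will be Lemma~\ref{lem:split-induced-clique} (a degree-$n$ clique-node forces an induced clique on at least $n$ leaves) and the elementary fact that, because $T$ is a tree, the alternated path witnessing an edge of $G$ is the \emph{unique} tree path between its endpoints, so an edge between two leaves is equivalent to that unique path being alternated.

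For [$\Rightarrow$] I would assume $T$ violates one of the two conditions and exhibit an induced $\cramped{K_{\geqslant4}}$. If $T$ has a clique-node of degree $n\geqslant 4$, Lemma~\ref{lem:split-induced-clique} immediately gives an induced clique on at least $n\geqslant 4$ vertices. If instead $T$ has an alternated path $P$ between markers $\cramped{p_u},\cramped{p_v}$ of two distinct clique-nodes $u,v$ using no interior edge of either, I would recycle the construction of Lemma~\ref{lem:split-characterization-diamond}: since $u,v$ are reduced clique-nodes of degree at least three, each has two further markers spawning maximal alternated paths that end (Lemma~\ref{lem:alternated-paths}) at leaves $a,b$ off $u$ and $c,d$ off $v$, all distinct by Lemma~\ref{lem:alternated-paths-disjoint} and separated by $P$. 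Then $(a,b)$ and $(c,d)$ are edges via a single interior clique-edge of $u$ (resp.\ $v$), while each of $(a,c),(a,d),(b,c),(b,d)$ is realized by concatenating a path to a marker of $u$, one clique-edge of $u$ to $\cramped{p_u}$, the path $P$, one clique-edge of $v$ from $\cramped{p_v}$, and a path to the target leaf; every junction alternates precisely because the clique labels make all their markers mutually adjacent. Hence $\{a,b,c,d\}$ induces a $\cramped{K_4}$.

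For [$\Leftarrow$] I would argue the contrapositive: given an induced $\cramped{K_{\geqslant4}}$, fix four pairwise-adjacent leaves $a,b,c,d$ and produce one of the two forbidden configurations. The key observation is that for three pairwise-adjacent leaves $x,y,z$ with median $m$ (the node lying on all three pairwise tree paths), the three markers of $\cramped{G_m}$ pointing toward $x,y,z$ are distinct and, since each pairwise tree path is alternated at $m$, pairwise adjacent; thus $\cramped{G_m}$ contains a triangle, and as a star is triangle-free $\cramped{G_m}$ must be a clique-node. I would then split on the Steiner topology of $a,b,c,d$. Let $\cramped{m_1}$ be the median of $a,b,c$, a clique-node by the observation. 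If $d$ lies in a fourth subtree off $\cramped{m_1}$, all four leaves occupy distinct branches, so $\cramped{m_1}$ has degree at least four --- the first configuration. Otherwise $d$ shares a branch with one of $a,b,c$, say $c$; letting $\cramped{m_2}$ be the median of $a,c,d$ (which lies strictly inside that branch, so $\cramped{m_2}\neq\cramped{m_1}$), the observation makes $\cramped{m_2}$ a second clique-node, and the sub-segment of the alternated $a$--$c$ path running from $\cramped{m_1}$ to $\cramped{m_2}$ is an alternated path between two distinct clique-nodes touching neither's interior edges --- the second configuration.

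The main obstacle I anticipate is purely bookkeeping in that last step: verifying that this sub-segment genuinely meets the definition of an alternated path between clique-nodes, i.e.\ that it leaves $\cramped{m_1}$ through the marker toward $c$ and enters $\cramped{m_2}$ through the marker toward $a$ \emph{without} traversing an interior edge of either clique (each being an endpoint incident to a single edge of the segment), and that the median-based case split is exhaustive and symmetric under relabeling of $a,b,c,d$. I would also sanity-check the boundary case where the $\cramped{m_1}$--$\cramped{m_2}$ path is short; reducedness of $T$ (no two adjacent clique-nodes) guarantees an intervening node, though the argument does not actually rely on it.
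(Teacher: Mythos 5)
Your proposal is correct and follows essentially the same route as the paper: the forward direction combines Lemma~\ref{lem:split-induced-clique} with the disjoint-maximal-alternated-paths construction, and the converse identifies the branching nodes of the pairwise alternated paths (your ``medians'') as clique-nodes via the triangle-in-the-label argument and then case-splits on whether they coincide. If anything, your case split --- on whether $d$ occupies a fourth branch off $m_1$ or shares a branch with one of $a,b,c$ --- is slightly more careful than the paper's, which concludes degree at least four whenever its two branching nodes $u_{b,c}$ and $u_{b,d}$ coincide, without addressing the sub-case where $c$ and $d$ still share a branch off that common node (there one must instead exhibit the second forbidden configuration, exactly as your split does).
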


\begin{proof}
  \noindent {[$\Rightarrow$]}~~We will show that for any clique-star tree
  $T$ breaking either of the conditions of this lemma, the accessibility
  graph $G(T)$ must have an induced clique on at least 4 vertices as a
  subgraph.
      
  First, suppose $T$ has a clique-node of degree 4 or more. It follows
  from Lemma~\ref{lem:split-induced-clique} that $G(T)$ must have an
  induced $\cramped{K_{\geqslant4}}$ subgraph.
      
  Second, suppose there are two clique-nodes $u,v\in V(T)$ connected via
  an alternated path $P$. Each of $\cramped{G_u}$ and $\cramped{G_v}$ must
  have at least three marker vertices, one of which belongs to $P$.
  Therefore, $u$ and $v$ each have at least two marker vertices with
  outgoing maximal alternated paths that end in two distinct leaves by
  Lemma~\ref{lem:alternated-paths-disjoint}. The four leaves at the end of
  these alternated paths are pairwise adjacent in $G$, thus inducing a
  $\cramped{K_4}$.\medskip

  
  \noindent {[$\Leftarrow$]}~~Let $G$ be a totally decomposable graph with
  an induced clique subgraph on 4 or more vertices, including
  $a,b,c,d\in V(G)$. We will show that the split-decomposition tree $T$ of $G$ breaks at
  least one the conditions listed in this lemma, i.e. either $T$ has a
  clique-node of degree 4 or more, or it has two clique-nodes (of degree
  3) connected via an alternated path.
     
  Consider the alternated paths $\cramped{P_{a,b}}$, $\cramped{P_{a,c}}$,
  and $\cramped{P_{a,d}}$ between the pairs of leaves $\{a,b\}$,
  $\{a,c\}$, and $\{a,d\}$ respectively. Let
  $\cramped{u_{b,c}}\in \cramped{P_{a,b}}\cap \cramped{P_{a,c}}$ be the
  closest internal node to $a$ in common between $\cramped{P_{a,b}}$ and
  $\cramped{P_{a,c}}$.
      
  We observe that $\cramped{u_{b,c}}$ must be a clique-node. This is the
  case because if $\cramped{u_{b,c}}$ were a star-node, at least two of
  the alternated paths would have to enter $\cramped{u_{b,c}}$ at two
  extremities and use two interior edges of the graph label
  $\cramped{g_{u_{b,c}}}$. In this case, the leaves at the end of those
  two paths could not be adjacent in $G$.
      
  By a symmetric argument, it can be shown that $\cramped{u_{b,d}}$, the
  closest internal node to $a$ in common between $\cramped{P_{a,b}}$ and
  $\cramped{P_{a,c}}$, must also be a clique-node.
      
  Depending on whether or not $\cramped{u_{b,c}}$ and $\cramped{u_{b,d}}$
  are distinct nodes, one of the conditions of the lemma is contradicted:
  \begin{itemize}[noitemsep, nosep]
  \item if $\cramped{u_{b,c}}$ and $\cramped{u_{b,d}}$ are the same clique
    node, there are four disjoint outgoing alternated paths out it,
    implying that it must have a degree of at least four, contradiction
    the first condition of the lemma;
  \item if $\cramped{u_{b,c}}$ and $\cramped{u_{b,d}}$ are distinct clique
    nodes, they are connected by an alternated path that is a part of
    $\cramped{P_{a,b}}$ between them, contradicting the second condition
    of the lemma.
  \end{itemize}
\end{proof}


\begin{lemma}[Split-decomposition tree characterization of graphs without pendant edges]%
  \label{lem:split-characterization-pendant}%
  Let $G$ be totally decomposable graph with the reduced clique-star
  split-decomposition tree $T$. $G$ does not have any pendant edges if and
  only if $T$ does not have any star-node with its center and an extremity
  adjacent to leaves.
\end{lemma}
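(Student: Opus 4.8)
The plan is to prove the contrapositive biconditional: $G$ has a pendant edge if and only if $T$ contains a star-node whose center and at least one of whose extremities are each joined by a tree edge to a leaf. Both directions rest on a single structural observation. An alternated path that enters a star-node $u$ at an \emph{extremity} marker can only continue through $u$ by taking the one interior edge to the center $c_u$ (an extremity is adjacent in the star $G_u$ to nothing but $c_u$); by contrast, a path entering $u$ at its center, or entering a clique-node at any marker, may continue to \emph{several} distinct markers and hence, by Lemmas~\ref{lem:alternated-paths} and~\ref{lem:alternated-paths-disjoint}, reach several distinct leaves.

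For the forward implication I would take the star-node $u$ whose center $c_u$ is adjacent to a leaf $\ell$ and whose extremity $x$ is adjacent to a leaf $\ell'$, and show that $\ell'$ is pendant with unique neighbour $\ell$. Any alternated path out of $\ell'$ must begin $\ell' \to x$; since $x$ is an extremity, the path is forced on to $c_u$; and from $c_u$ the tree edge reaches the leaf $\ell$, where the path terminates (a maximal alternated path ends in a leaf, Lemma~\ref{lem:alternated-paths}). Thus $\ell$ is the only leaf reachable from $\ell'$, so $(\ell,\ell')$ is a pendant edge. This direction is short.

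The substance is in the converse. Given a pendant vertex $\ell'$ with unique neighbour $\ell$, let $w$ be the internal node to which $\ell'$ attaches, at marker $m$, and rule out every configuration but the desired one by showing each alternative forces $\deg_G(\ell')\geq 2$. If $w$ is a clique-node, then since $w$ has degree at least three (Theorem~\ref{thm:cunningham}) the marker $m$ is adjacent in $G_w$ to at least two other markers, whose outgoing maximal alternated paths end at two \emph{distinct} leaves (Lemmas~\ref{lem:alternated-paths} and~\ref{lem:alternated-paths-disjoint}); the same conclusion holds if $w$ is a star-node and $m$ is its center $c_w$, via the at least two extremities. Either case contradicts pendancy, so $w$ must be a star-node and $m$ one of its extremities.

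The final and most delicate step is to show that the center $c_w$ is adjacent to a leaf. The alternated path out of $\ell'$ is forced through $m \to c_w$ and then across the tree edge at $c_w$ to some node $z$. If $z$ were internal I would again obtain $\deg_G(\ell')\geq 2$: if $z$ is a clique-node the path fans out to at least two leaves as above; and if $z$ is a star-node, condition~(c) of Theorem~\ref{thm:cunningham} forbids linking the center $c_w$ to an \emph{extremity} of $z$, so the tree edge must reach the center $c_z$, whence the path fans out to the at least two extremities of $z$ and thus to two distinct leaves. Every possibility contradicts pendancy, so $z$ is a leaf, $z=\ell$, and $w$ is the required star-node. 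I expect this last case analysis around $c_w$ to be the main obstacle: one must notice that reducedness does \emph{not} exclude a center-to-center star adjacency, and check separately that even that configuration produces a second neighbour, so that the claimed configuration is the only survivor.
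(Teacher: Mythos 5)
Your proof is correct and follows essentially the same route as the paper's: the forward direction forces every alternated path out of the extremity-attached leaf through the single interior edge to the center, and the converse rules out each alternative attachment of the pendant leaf and of the center's tree edge (clique-node, star center, star extremity excluded by reducedness) by producing a second neighbour via Lemmas~\ref{lem:alternated-paths} and~\ref{lem:alternated-paths-disjoint}. The only cosmetic difference is that you identify the center's neighbour $z$ as a leaf first and then conclude $z=\ell$ by uniqueness, where the paper enumerates the same cases directly.
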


\begin{proof}
  \noindent {[$\Rightarrow$]}~~Let $T$ be a clique-star tree, and let
  $u\in V(T)$ be a star-node, such that its center $\cramped{c_u}$ is
  adjacent to a leaf $a\in V(T)$ and one of its extremities
  $\cramped{x_b}$ is adjacent to a leaf $b\in V(T)$. We will show that $b$
  does not have any neighbors beside $a$ in the accessibility graph $G(T)$
  and thus, the edge $(a,b)$ is a pendant edge of $G(T)$.
       
  Suppose, on the contrary, that $b$ has a neighbor $c\in V(G(T))$,
  $c \neq a$. Then there must be an alternated path $P$ in $T$ that
  connects $b$ and $c$. Note that $P$ must go through $u$, entering it at
  an extremity $\cramped{x_c}\in V(\cramped{G_u})$. The path $P$ must thus
  use two interior edges $(\cramped{x_b},\cramped{c_u})$ and
  $(\cramped{c_u},\cramped{x_c})$ and cannot be alternated
  (Figure~\ref{fig:split-subpattern-pendant}).\medskip


  \begin{figure*}
    \centering
    \includegraphics[scale=0.5]{./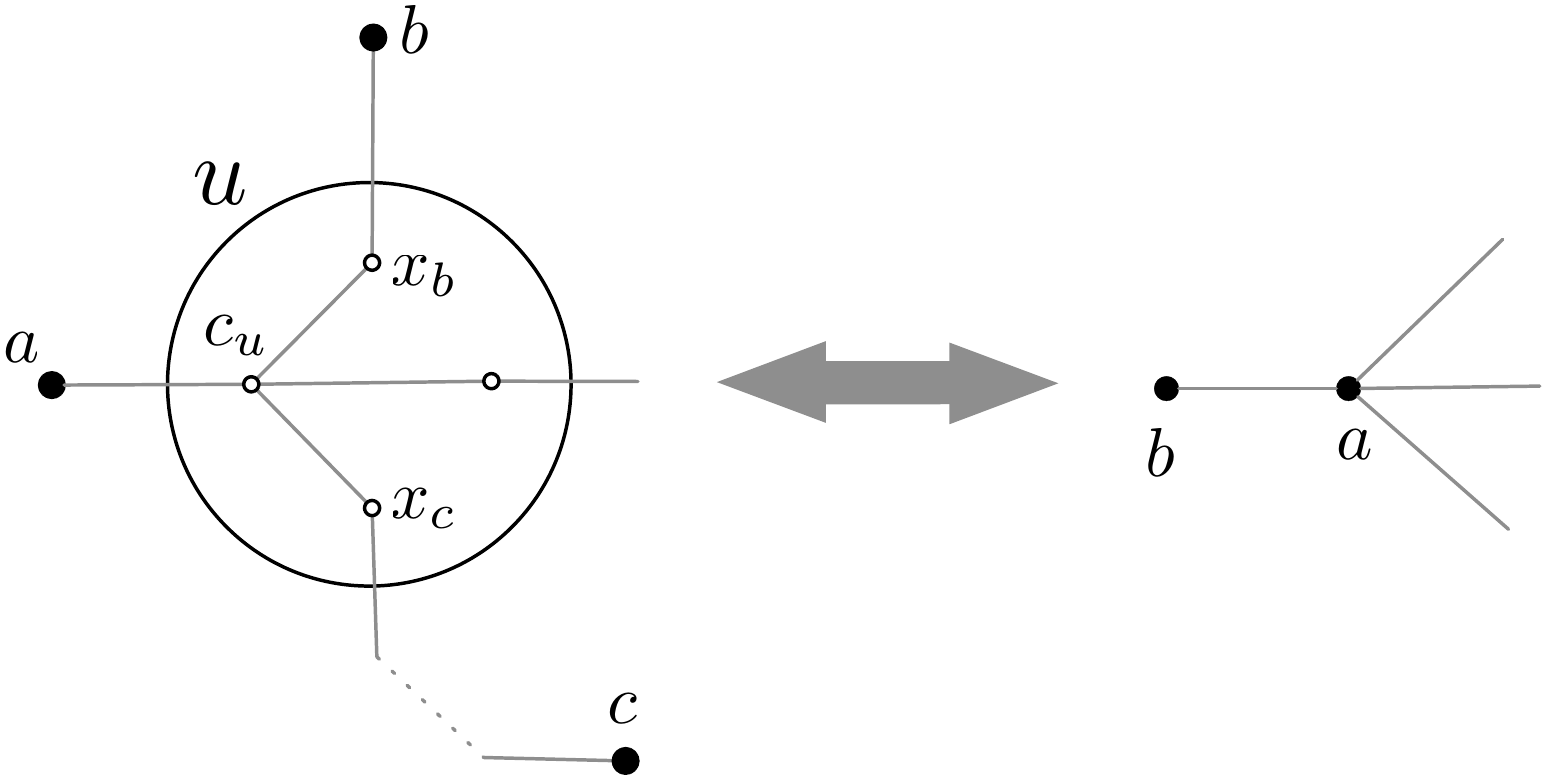}
    \caption{\label{fig:split-subpattern-pendant}%
      In a clique-star tree, a star-node with both its center and one of
      its extremities adjacent to leaves translates to a pendant edge in
      the accessibility graph.}
  \end{figure*}
  
  \noindent {[$\Leftarrow$]}~~Let $G$ be a totally decomposable graph with
  a pendant edge $(a,b)\in E(G)$ such that $b$ has degree~1
  (Figure~\ref{fig:split-subpattern-pendant}). We will show that the
  corresponding leaves $a$ and $b$ in the reduced clique-star tree $T$ of
  $G$ are attached to a star-node $u$, with its center adjacent to $a$ and
  one of its extremities adjacent to $b$.
  Let $u\in V(T)$ be the internal node to which $b$ is attached, and let
  $\cramped{x_b}\in V(\cramped{G_u})$ be the marker vertex adjacent to
  $b$.
     
  First, we will show that $u$ is indeed a star-node and $\cramped{x_b}$
  is one of its extremities. To do so, it suffices to show that
  $\cramped{x_b}$ has degree~1 in $\cramped{G_u}$. Suppose, on the
  contrary, that there are two marker vertices $y,z\in V(\cramped{G_u})$
  that are adjacent to $\cramped{x_b}$, and consider two maximal
  alternated paths out of $y$ and $z$. By
  Lemma~\ref{lem:alternated-paths-disjoint}, these paths end at two
  distinct leaves of $T$, both of which much be adjacent to $b$ in $G$,
  contradicting the assumption that $b$ has degree~1.

     
  Next, we will show that $a$ must be attached to the center
  $\cramped{c_u}$ of $\cramped{G_u}$. Otherwise, one of the following
  cases will occur:
  \begin{itemize}[noitemsep, nosep]
  \item \emph{$\cramped{c_u}$ is adjacent to a leaf $c\in V(T)$}.\ \ %
    In this case, we have the alternated path
    $b, \cramped{x_b}, \cramped{c_u}, c$, implying $(b,c)\in E(G)$,
    contradicting the assumption that $b$ has degree~1.
  \item \emph{%
      $\cramped{c_u}$ is adjacent to a clique-node $v\in V(T)$}.\ \ %
    With an argument similar to the previous case, it can be shown that in
    this case, there must exist at least two alternated paths out of $v$
    that lead to leaves, all of which must be adjacent to $b$.
  \item \emph{$\cramped{c_u}$ is adjacent to the center of a star-node
      $v\in V(T)$}.\ \ %
    Similar to the previous case, there must exist at least two alternated
    paths out extremities of $v$ that lead to leaves, all of which must be
    adjacent to $b$.
  \item \emph{$\cramped{c_u}$ is adjacent to an extremity of a star-node
      $v\in V(T)$}.\ \ %
    This case never happens, since $T$ is assumed to be a \emph{reduced}
    clique-star tree.
  \end{itemize}
  Therefore, $a$ must be attached to the center of $\cramped{G_u}$, so $u$
  is a star-node with $a$ adjacent to its center and $b$ adjacent to one
  of its extremities.
\end{proof}

\begin{lemma}[Split-decomposition tree characterization of graphs without bridges]%
  \label{lem:split-characterization-bridge}%
  Let $G$ be totally decomposable graph with the reduced clique-star
  split-decomposition tree $T$. $G$ does not have any bridges if and only
  if $T$ does not have:
  \begin{itemize}[noitemsep, nosep]
  \item any star-node with its center and an extremity adjacent to leaves;
  \item any two star-nodes adjacent via their extremities, with their
    centers adjacent to leaves.
  \end{itemize}
\end{lemma}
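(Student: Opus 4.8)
The plan is to prove both implications through their contrapositives, in the same style as the preceding lemmas, and to offload the first bullet entirely onto the pendant-edge characterization: every pendant edge is a bridge, and conversely every bridge with an endpoint of degree one is a pendant edge, so Lemma~\ref{lem:split-characterization-pendant} handles that configuration in both directions. What remains is to match genuine \emph{internal} bridges (both endpoints of degree at least two) with the second configuration.

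For the direction ``configuration implies bridge,'' suppose $T$ contains two star-nodes $u,v$ joined extremity-to-extremity, with centers $c_u,c_v$ adjacent to leaves $a,b$. First I would exhibit the edge $(a,b)\in E(G)$ by tracing the alternated path $a, c_u, x_u, x_v, c_v, b$, which uses a single interior edge at each star. I would then show that $(a,b)$ is the \emph{unique} edge of $G$ crossing the tree edge $(x_u,x_v)$: any other leaf on $u$'s side reaches $x_u$ only through the center $c_u$, so it would need two interior edges of $G_u$ and cannot cross while staying alternated, and symmetrically on $v$'s side. Since $u$ and $v$ have degree at least three (Theorem~\ref{thm:cunningham}), both sides also contain additional leaves, so deleting $(a,b)$ disconnects $G$; hence $(a,b)$ is a bridge.

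For the converse, given a bridge $(a,b)$ whose endpoints both have degree at least two, I would work with the unique tree path $\pi = a - u_1 - \cdots - u_k - b$, which is alternated because $(a,b)\in E(G)$. The goal is to establish that $k=2$, that $u_1,u_2$ are stars with $a,b$ at their centers, and that they meet extremity-to-extremity. The recurring engine is this: whenever a marker that $\pi$ traverses carries a second alternated branch off it---which a clique marker always does, and which the center of a star always does---Lemma~\ref{lem:alternated-paths-disjoint} produces either a leaf simultaneously adjacent to $a$ and $b$, or more generally a second $a$--$b$ path, placing $(a,b)$ on a triangle or a $C_4$ and contradicting the bridge hypothesis. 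I would rule out in turn: $u_1$ being a clique or having $a$ at an extremity (each produces a common neighbour of $a$ and $b$, or forces $\deg a = 1$); the symmetric statements for $u_k$ and $b$; and the existence of any intermediate node. Once $k=2$ is known, $a$ sitting at the center of $u_1$ forces $\pi$ to leave $u_1$ at an extremity, and likewise for $b$, so the join is extremity-to-extremity, matching the second bullet exactly.

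The main obstacle is the intermediate-node analysis in this converse direction. Several sub-configurations resist a single local rule: a clique intermediate yields a common neighbour immediately, but a star intermediate must be split according to whether $\pi$ passes through it extremity-to-center or center-to-center. The center-to-extremity star adjacency is discarded outright by Theorem~\ref{thm:cunningham}(c), whereas the center-to-center adjacency \emph{is} permitted in a reduced tree, and there I expect to have to trace an explicit four-cycle threading the two shared centers---of the form $a - r - q - b - a$, where $r$ and $q$ hang off the free extremities---to contradict the bridge property. Organizing these cases so that each either produces a cycle through $(a,b)$ or collapses to the $k=2$ situation, while invoking Theorem~\ref{thm:cunningham} to eliminate the non-reduced possibilities, is where the real care lies; everything after $k=2$ is immediate.
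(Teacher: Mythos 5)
Your proposal is correct and follows essentially the same route as the paper: the first bullet is delegated to Lemma~\ref{lem:split-characterization-pendant}, the extremity-to-extremity configuration is shown to yield the unique edge of $G$ crossing the tree cut at $(x_u,x_v)$ (hence a bridge, internal because both stars have degree at least three), and the converse eliminates cliques and wrong attachment points by producing triangles or center-center-induced $C_4$'s through $(a,b)$. If anything you are more careful than the paper's own argument, which tacitly assumes the two star-nodes $u$ and $v$ are adjacent, whereas you explicitly run the intermediate-node case analysis along the tree path before concluding $k=2$.
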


\begin{proof}
  We distinguish between two kinds of bridges: pendant edges and other
  bridges, which we will call \emph{internal} bridges.
  Lemma~\ref{lem:split-characterization-pendant} states that a star-node
  with its center and an extremity adjacent to leaves in $T$ corresponds
  to a pendant edge in $G$. Therefore, it suffices to show $G$ has no
  internal bridges if and only if the second condition holds in
  $T$.\medskip

  \noindent {[$\Rightarrow$]}~~Let $T$ be a clique-star tree, and let
  $u,v\in V(T)$ be two star-nodes, with the center
  $\cramped{c_u}\in\cramped{G_u}$ adjacent to a leaf $a\in V(T)$, the
  center $\cramped{c_v}\in\cramped{G_v}$ adjacent to a leaf $b\in V(T)$,
  and two of their extremities $\cramped{x_u}\in\cramped{G_u}$ and
  $\cramped{x_v}\in\cramped{G_v}$ adjacent to each other. We will show
  that $(a,b)$ is an internal bridge in $G(T)$.
            
  First, let us define the following partition of the leaves of $T$ into
  two sets: Since every edge in a tree is a bridge, removing $(u,v)$ from
  $T$ breaks $T$ into two connected components. Let
  $\cramped{V_u}, \cramped{V_v}\in V(T)$ be the leaves of these components
  respectively, and note that $a\in\cramped{V_u}$ and $b\in\cramped{V_v}$
  (Figure~\ref{fig:split-subpattern-bridge}).

  Next, note that $(a,b)\in E(G(T))$ by tracing the alternated path
  $a, \cramped{c_u}, \cramped{x_u}, \cramped{x_v}, \cramped{c_v}, b$. To
  show that $(a,b)$ must be an internal bridge, it suffices We will show
  that the edge $(a,b)$ is a bridge in $G(T)$ by showing it does not
  belong to any cycles. We will then confirm that $(a,b)$ must be an
  \emph{internal} bridge.


  Suppose, on the contrary, that there $G(T)$ has a cycle $C$ of vertices
  $(\cramped{x_1}=a,\cramped{x_2},\dots,\cramped{x_{k-1}},\cramped{x_k}=b)\in
  \cramped{V(G(T))^k}$
  for some $k\geqslant3$. Clearly, $\cramped{x_1}=a\in \cramped{V_u}$.
  Additionally, for every edge
  $(\cramped{x_i},\cramped{x_{i+1}})\in E(G(T))$, $i=1\dots k-1$, there
  must be an alternated path $\cramped{P_i}$ in $T$ between leaves
  $\cramped{x_i}$ and $\cramped{x_{i+1}}$. Furthermore, if
  $\cramped{x_i}\in\cramped{V_u}$, we must also have
  $\cramped{x_{i+1}}\in\cramped{V_u}$, since otherwise, $\cramped{P_i}$
  must use the only edge crossing the cut $\cramped{V_u}, \cramped{V_v}$;
  this requires $\cramped{P_i}$ to enter and exit $u$ via two extremities
  of $\cramped{G_u}$, which requires using two interior edges from
  $\cramped{G_u}$. Applying a similar argument for every edge
  $(\cramped{x_i},\cramped{x_{i+1}})$ of $C$ up to $b$ implies that
  $b\in\cramped{V_u}$. Therefore, we must have
  $b\in\cramped{V_u}\cap\cramped{V_v}$, contradicting the fact that
  $\cramped{V_u}$ and $\cramped{V_v}$ are disjoint.

  Finally, we can show via Lemma~\ref{lem:alternated-paths-disjoint} that
  $(a,b)$ must be an \emph{internal} bridge, by showing that $a$ and $b$
  must have neighbors besides each other in $G(T))$. We will confirm this
  for $a$, and the argument applies symmetrically to $b$. Since $T$ is
  reduced, $u$ has degree at least three, so there is at least one
  alternating path out of an extremity of $\cramped{G_u}$ other than
  $\cramped{x_u}$ ending in a leaf of $T$ other than $b$, implying that
  $a$ must be adjacent to that leaf in $G(T)$. Similarly, $b$ must have a
  neighbor in $G(T)$ other than $a$. Therefore, $(a,b)$ cannot be a
  pendant edge and must be an internal bridge.\medskip
  
  \begin{figure*}
    \centering
    \includegraphics[scale=0.5]{./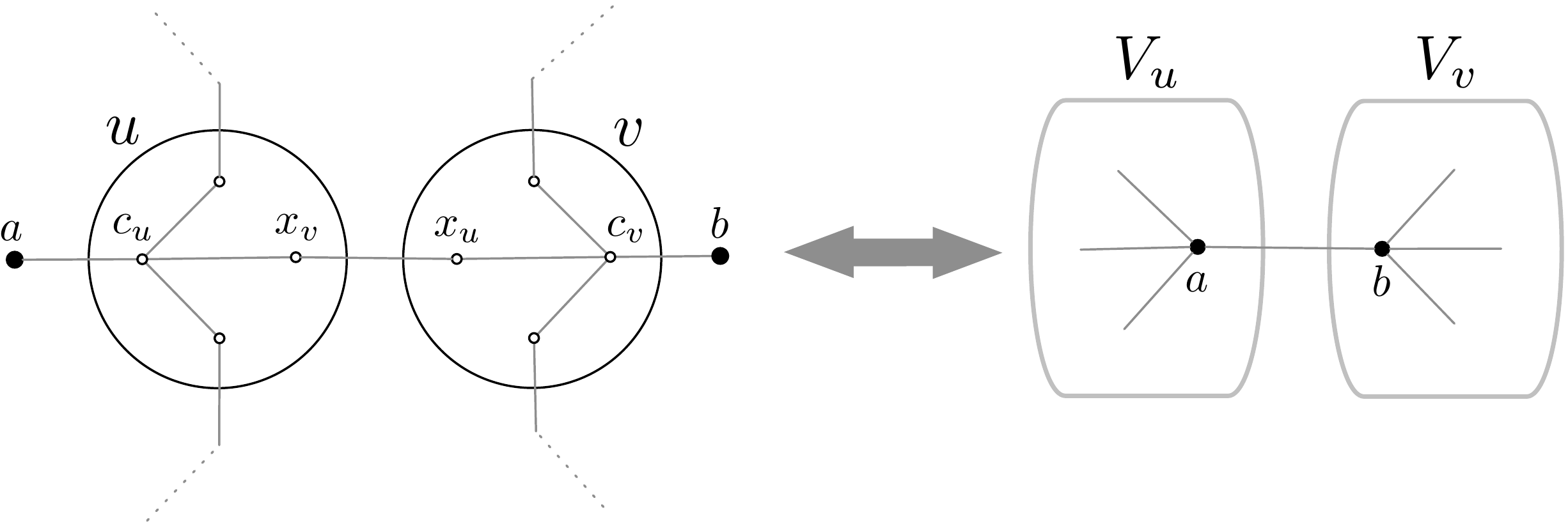}
    \caption{\label{fig:split-subpattern-bridge}%
      In a clique-star tree, a structure consisting of two star-nodes
      adjacent via their extremities, with their centers adjacent to
      leaves, translates to an \emph{internal} bridge in the accessibility
      graph.}
  \end{figure*}


  \noindent {[$\Leftarrow$]}~~Let $G$ be a totally decomposable graph with
  an internal bridge $(a,b)\in E(G)$
  (Figure~\ref{fig:split-subpattern-bridge}). We show that the
  corresponding leaves $a$ and $b$ in the reduced clique-star tree $T$ of
  $G$ are respectively attached to centers of two star-node $u$ and $v$
  adjacent via their extremities.
       
  Let $u\in V(T)$ be the internal node to which $a$ is attached, and let
  $\cramped{c_u}\in V(\cramped{G_u})$ be the marker vertex adjacent to
  $a$. Similarly, let $v\in V(T)$ be the internal node to which $b$ is
  attached, and let $\cramped{c_v}\in V(\cramped{G_v})$ be the marker
  vertex adjacent to $b$.
     
  
  Now, we show that $u$ and $v$ are star-nodes. Suppose, on the contrary,
  that $u$ is a clique-node, and note that $\cramped{G_u}$ must have at
  least two other marker vertices besides $\cramped{c_u}$. Consider the
  two maximal alternated paths $\cramped{P_x}$ and $\cramped{P_y}$ out of
  these two marker vertices, respectively ending in leaves $x,y\in V(T)$
  by Lemma~\ref{lem:alternated-paths-disjoint}. We first observe that
  $(a,x)\in E(G)$ by the union of the alternated path $\cramped{P_x}$ and
  the interior edge of $\cramped{G_u}$ between $\cramped{c_u}$ and the
  marker vertex at the end of $\cramped{P_x}$. Similarly, we have
  $(a,y)\in E(G)$. Furthermore, $(x,y)\in E(G)$ by the union of the two
  alternated paths $\cramped{P_x}$ and $\cramped{P_y}$ and the interior
  edge of $\cramped{G_u}$ between the ends of these paths.The trio of
  vertices $a,x,y\in V(G)$ thus induces a $\cramped{C_3}$ in $G$,
  contradicting the assumption that $(a,b)$ is a bridge.
       
  Next, we will show that $u$ and $v$ are adjacent to each other via their
  extremities $\cramped{x_u}$ and $\cramped{x_v}$. Otherwise, since no
  star centers are adjacent to extremities of other star-nodes in a
  reduced split-decomposition tree, $u$ and $v$ would have to be adjacent via their
  centers. This would constitute a center-center path, which would, by
  Lemma~\ref{lem:split-characterization-c4}, imply that $(a,b)$ belongs to
  a $\cramped{C_4}$ and cannot be a bridge.
       
  Finally, we confirm that $\cramped{c_u}$ and $\cramped{c_v}$, the marker
  vertices to which $a$ and $b$ are attached, are the centers of
  $\cramped{G_u}$ and $\cramped{G_v}$ respectively. It suffices to show
  this claim for $a$, as the argument symmetrically applies to $b$ as
  well. If, on the contrary, $a$ were attached to an extremity of
  $\cramped{G_u}$, the only path in $T$ between $a$ and $b$ would have to
  use two interior edges of $\cramped{G_u}$, one from $\cramped{c_u}$ to
  the center of $\cramped{G_u}$ and one from the center to
  $\cramped{x_u}$. This would imply $(a,b)\not\in E(G)$, a contradiction.
\end{proof}



\begin{figure*}
  \centering
  \includegraphics[scale=0.5]{./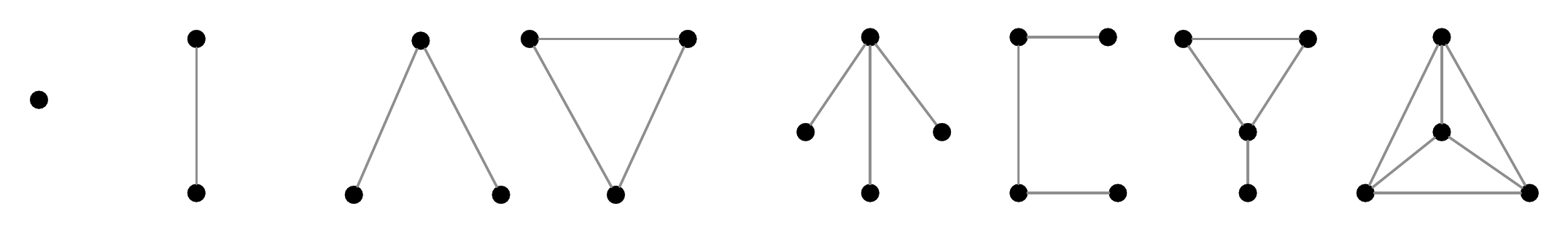}
  \caption{\label{fig:small-block-graphs}%
    All block graphs on four vertices or less.}
\end{figure*}

\begin{figure*}
\centering
\begin{bigcenter}
  \begin{minipage}[t]{.45\linewidth}
   \centering
    \includegraphics[scale=0.5]{./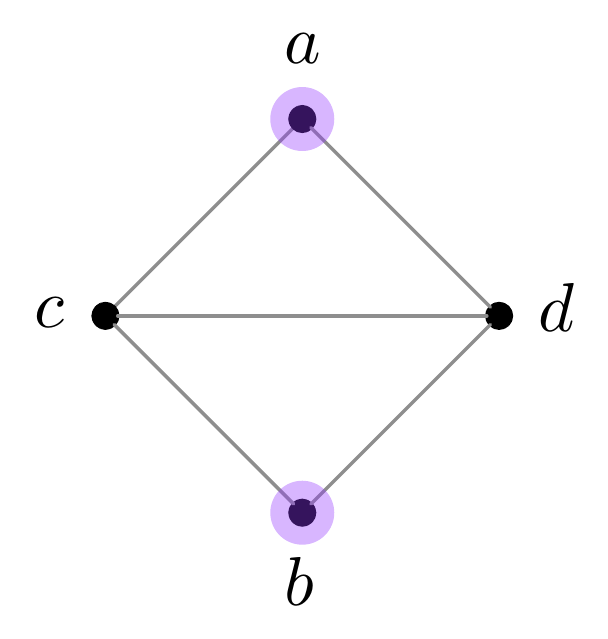}%
    \subcaption{\label{fig:weakly-geodetic-diamond}%
      A diamond.}
  \end{minipage}\hspace{2em}
  \begin{minipage}[t]{.45\linewidth}
   \centering
    \includegraphics[scale=0.5]{./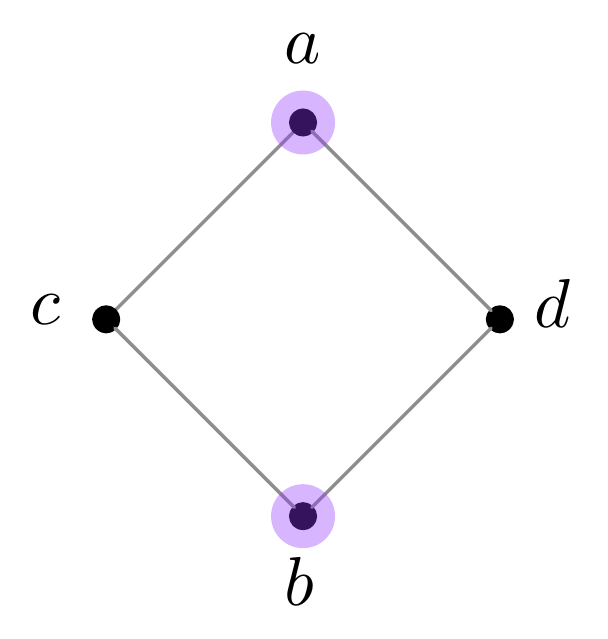}%
    \subcaption{\label{fig:weakly-geodetic-c4}%
      A $\cramped{C_4}$.}
   \end{minipage}
\end{bigcenter}
\caption{%
  \label{fig:weakly-geodetic-subgraphs}%
  The class of \emph{weakly geodetic} graphs can be characterized in terms
  of forbidden subgraphs as ($\cramped{C_4}$, diamond)-free. This is the
  case because, while in a weakly geodetic graph, a pair of vertices of
  distance 2 has a unique common neighbor, in an induced $\cramped{C_4}$
  or diamond subgraph, the highlighted vertices $a$ and $b$ are of
  distance 2 while having at least two common neighbors $c$ and $d$.}
\end{figure*}

\section{Block graphs\label{sec:block}}

In this section, we analyze a class of graphs called block graphs. After
providing a general definition of this class, we present its well-known
forbidden induced subgraph characterization, and using a lemma we proved
in Section~\ref{sec:forbidden}, we deduce a characterization of the
split-decomposition tree of graphs in this class.

Block graphs are the (weakly geodetic) subset of ptolemaic
graphs---themselves the (chordal) subset of distance-hereditary graphs.
Thus, their split-decomposition tree is a more constrained version of that
of ptolemaic graphs. As such, we use block graphs as a case study to
prepare for ptolemaic graphs, for which the grammar is a bit more
complicated.

\subsection{Characterization.}

For any graph $G$, a vertex $v$ is a \emph{cut vertex} if the number of
connected components is increased after removing $v$, and a \emph{block}
is a maximal connected subgraph without any cut vertex.

A graph is then called a \emph{block graph}~\cite{Harary63} if and only if
its blocks are complete graphs (or cliques) and the intersection of two
blocks is either empty or a cut vertex. Block graphs are the intersection
of ptolemaic graphs and weakly geodetic graphs, as was shown by Kay and
Chartrand~\cite{KaCh65}.

\begin{definition}[Kay and Chartrand~{\cite[\S 2]{KaCh65}}]%
  \label{def:weakly-geodetic}%
  A graph is \emph{weakly geodetic} if for every pair of vertices of
  distance 2 there is a unique common neighbour of them.
\end{definition}

\noindent It is relatively intuitive to figure out from this definition,
that weakly geodetic graphs are exactly ($\cramped{C_4}$, diamond)-free
graphs, but surprisingly we were only able to find this result mentioned
relatively recently~\cite{EsHoSpSr11}.

\begin{lemma}
  A graph is weakly geodetic if and only if it contains no induced
  $\cramped{C_4}$ or diamond subgraphs.
\end{lemma}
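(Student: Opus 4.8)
The plan is to prove both implications by directly translating the metric condition into the combinatorics of four vertices. The key observation is that a pair of vertices at distance exactly $2$ possessing two distinct common neighbours is precisely the configuration sitting inside both forbidden patterns, so the equivalence should reduce to an elementary case analysis.

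First I would handle the easy direction, showing by contraposition that if $G$ contains an induced $\cramped{C_4}$ or an induced diamond, then $G$ is not weakly geodetic. In either pattern, label the two non-adjacent vertices $a$ and $b$ and their two common neighbours $c$ and $d$ (for the diamond, $a$ and $b$ are the endpoints of the single missing edge, as in Figure~\ref{fig:weakly-geodetic-subgraphs}). Since $a$ and $b$ are non-adjacent but both adjacent to $c$, their distance in $G$ is exactly $2$; yet $c$ and $d$ are two distinct common neighbours, which directly violates Definition~\ref{def:weakly-geodetic}.

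For the converse, suppose $G$ is not weakly geodetic. Unwinding the definition, there is a pair $a,b$ at distance $2$ admitting at least two common neighbours $c\neq d$, and being at distance $2$ forces $a$ and $b$ to be non-adjacent. I would then examine the induced subgraph on $\{a,b,c,d\}$. Among its six vertex pairs, four are forced edges ($ac,ad,bc,bd$, because $c$ and $d$ are common neighbours) and one is a forced non-edge ($ab$); only the adjacency of $c$ and $d$ remains undetermined. A short case split then finishes the argument: if $c$ and $d$ are non-adjacent, the induced subgraph is exactly a $\cramped{C_4}$; if they are adjacent, it is $\cramped{K_4}$ minus the edge $ab$, that is, a diamond.

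The argument is elementary, so I do not expect a serious obstacle; the only point requiring care is guaranteeing that the forbidden subgraphs are genuinely \emph{induced}. This is automatic in the forward direction, and in the converse it is secured by the bookkeeping above: every one of the six pairs among $a,b,c,d$ is pinned down except $cd$, leaving no room for a stray edge that could spoil inducedness.
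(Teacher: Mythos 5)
Your proposal is correct and follows essentially the same route as the paper's proof: both directions are handled by exhibiting a distance-$2$ pair with two common neighbours inside each forbidden pattern, and conversely by case-splitting on the adjacency of $c$ and $d$ to recover a $\cramped{C_4}$ or a diamond on $\{a,b,c,d\}$. Your explicit bookkeeping of all six vertex pairs to guarantee inducedness is a slightly more careful write-up of the same argument.
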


\begin{proof}
  \noindent {[$\Rightarrow$]}~~We show a weakly geodetic graph is
  ($\cramped{C_4}$, diamond)-free by arguing that graphs with induced
  $\cramped{C_4}$ subgraphs and diamonds as induced subgraphs are not
  weakly geodetic. This is illustrated in
  Figure~\ref{fig:weakly-geodetic-subgraphs}, in which the highlighted
  pairs of vertices in a $\cramped{C_4}$ and a diamond are of distance 2
  and have more than one neighbor in common.

  
  \noindent {[$\Leftarrow$]}~~Let $G$ be a ($\cramped{C_4}$,
  diamond)-free, and let $a,b\in V(G)$ be vertices of distance 2 with two
  neighbors $c,d \in V(G)$ in common. Since $a,b$ have distance 2,
  $(a,b) \not\in E(G)$. Depending on whether or not $(c,d)$ belongs to the
  edge set of $G$, we have $\{a,b,c,d\}$ inducing a diamond
  (Figure~\ref{fig:weakly-geodetic-diamond}) or a $\cramped{C_4}$
  (Figure~\ref{fig:weakly-geodetic-c4}) respectively.
\end{proof}

\noindent Since we have established that block graphs are the subset of
totally decomposable (distance-hereditary) graphs which are also
($\cramped{C_4}$, diamond)-free, we can now characterize their
split-decomposition tree by applying our two lemmas from
Section~\ref{sec:forbidden} and deducing the overall constraint on the
split-decomposition trees that these imply.

\begin{theorem}[split-decomposition tree characterization of block graphs]%
  \label{thm:split-characterization-block}%
  A graph $G$ with the reduced split-decomposition tree $(T,\mathcal{F})$ is a block
  graph if and only if
  \begin{enumerate}[label=(\alph*), noitemsep, nosep]
  \item $T$ is a clique-star tree;
  \item the centers of all star-nodes are attached to leaves.
  \end{enumerate}
\end{theorem}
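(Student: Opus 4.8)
The plan is to reduce the theorem to the forbidden-subgraph characterization already established just before the statement, namely that $G$ is a block graph if and only if $G$ is distance-hereditary and $(C_4,\text{diamond})$-free, and then to translate each of these three conditions into a structural condition on $T$ using the lemmas of Section~\ref{sec:forbidden}. By Lemma~\ref{lem:dh-split-characterization}, $G$ is distance-hereditary if and only if $T$ is a clique-star tree, which is precisely condition~(a). By Lemma~\ref{lem:split-characterization-c4}, $G$ is $C_4$-free if and only if $T$ has no center-center path, and by Lemma~\ref{lem:split-characterization-diamond}, $G$ is diamond-free if and only if $T$ has no clique-center path. Consequently the whole theorem collapses to a single combinatorial claim: assuming $T$ is a clique-star tree, the simultaneous absence of center-center and clique-center paths is equivalent to condition~(b), that the center of every star-node is attached to a leaf.

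To show that condition~(b) rules out both kinds of forbidden path, I would observe that any center-center or clique-center path has an endpoint at the center $c_u$ of some star-node $u$, and that by definition such a path avoids the interior edges of $u$. Hence near $u$ the path may traverse only the single tree edge corresponding to the marker vertex $c_u$. If $c_u$ is attached to a leaf $\ell$, that edge leads to $\ell$, which has degree one in $T$; the path cannot be extended past $\ell$, so its opposite endpoint is forced to be the leaf itself rather than a second star-center (in the center-center case) or a clique marker vertex (in the clique-center case). This contradicts the respective definition, so under condition~(b) no such path exists.

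For the converse I would argue the contrapositive: if the center $c_u$ of some star-node is not attached to a leaf, then the tree edge corresponding to $c_u$ leads to an internal node $w$, which in a clique-star tree is either a clique-node or a star-node. If $w$ is a clique-node, the single edge joining $u$ and $w$ is a (vacuously) alternated path from the star-center $c_u$ to a marker vertex of a clique, using no interior edge of either node, and is therefore a clique-center path. If $w$ is a star-node, then since $c_u$ is a center, condition~(c) of the reduced tree (Theorem~\ref{thm:cunningham}) forbids the far endpoint of that edge from being an extremity, so it must be the center $c_w$; the edge is then a center-center path. In either case a forbidden path appears, which establishes the claim.

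I expect the main obstacle to be the careful bookkeeping around these length-one paths: one must verify that a single tree edge genuinely meets the definitions of center-center and clique-center path (the alternating condition holding vacuously and no interior edge being used), and, in the other direction, that a path ending at a star-center truly cannot leave a leaf behind it. The use of the reduced condition~(c) in the star-to-star case is the one structurally substantive step, ruling out a center adjacent to an extremity; everything else is a direct composition of the three characterization lemmas with the relevant definitions.
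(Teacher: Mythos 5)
Your proposal is correct and follows essentially the same route as the paper: reduce block graphs to $(\cramped{C_4},\text{diamond})$-free distance-hereditary graphs, then invoke Lemma~\ref{lem:dh-split-characterization}, Lemma~\ref{lem:split-characterization-c4}, and Lemma~\ref{lem:split-characterization-diamond}. In fact you supply more detail than the paper does on the one step it merely asserts --- the equivalence between ``every star center is attached to a leaf'' and the simultaneous absence of center-center and clique-center paths --- and your two-directional argument for that equivalence (a leaf blocks any such path at its first edge; a non-leaf neighbor of a center yields a length-one forbidden path, using condition~(c) of Cunningham's theorem in the star-star case) is sound.
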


\begin{proof}
  We have introduced block graphs as being the intersection class of
  ptolemaic graphs and weakly geodetic graphs. As we will see again in
  Section~\ref{sec:ptolemaic}, Howorka~\cite[\S 2]{Howorka81} has shown
  that ptolemaic graphs are the intersection class of distance-hereditary
  graphs and chordal (triangulated) graphs.

  A chordal graph is a graph in which any cycle of size larger than 3 has
  a chord; because distance-hereditary graphs are themselves
  $\cramped{C_{\geqslant 5}}$-free, chordal distance-hereditary
  (ptolemaic) graphs are the $\cramped{C_4}$-free distance-hereditary
  graphs. The additional constraint that comes with being weakly geodetic,
  implies that block graphs are the ($\cramped{C_4}$, diamond)-free
  distance-hereditary graphs\footnote{Alternatively block graphs can be
    characterized as the class of ($\cramped{C_{\geqslant4}}$,
    diamond)-free graphs. Since block graphs are also distance-hereditary,
    and since distance-hereditary graphs do not have any induced
    $\cramped{C_{\geqslant5}}$, we conclude again that block graphs can be
    thought of as ($\cramped{C_4}$, diamond)-free distance-hereditary
    graphs.}.
  
  The first condition in this theorem is due to the total decomposability
  of block graphs as a subset of distance-hereditary graphs. The second
  condition forbids having any center-center or clique-center paths,
  which, by Lemma~\ref{lem:split-characterization-c4} and
  Lemma~\ref{lem:split-characterization-diamond} respectively, ensures
  that $G$ does not have any induced $\cramped{C_4}$ or diamond.
\end{proof}

\subsection{Rooted grammar.%
  \label{subs:block-rooted-grammar}}

Using the split-decomposition tree characterization derived above, we can
provide a symbolic grammar that can be used to enumerate labeled and
unlabeled block graphs.

\begin{theorem}{\label{thm:rooted-grammar-block}}
  The class $\clsBGrl$ of block graphs rooted at a vertex is specified by
  \begin{align}
    \clsBGrl   &= \clsAtom_{\mLeaf} \times (\cls[C]{S}+\cls[X]{S}+\cls{K})\\
    \cls{K}    &= \Set[\geqslant2]{\clsAtom+\cls[X]{S}}\\
    \cls[C]{S} &= \Set[\geqslant2]{\clsAtom+\cls{K}+\cls[X]{S}}\\
    \cls[X]{S} &= \clsAtom\times\Set[\geqslant1]{\clsAtom+\cls{K}+\cls[X]{S}}\text{.}
  \end{align}
\end{theorem}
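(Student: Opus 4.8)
The plan is to read the four productions directly off the split-decomposition tree characterization of block graphs in Theorem~\ref{thm:split-characterization-block}: the reduced tree $T$ must be a clique-star tree (every internal node is a clique- or star-node) in which the center of every star-node is attached to a leaf. First I would root $T$ at the distinguished vertex, a leaf $\clsAtom_{\mLeaf}$. Since a leaf has degree one, its unique neighbour is an internal node, which the downward traversal enters in exactly one of three mutually exclusive ways: through an edge of a clique-node ($\cls{K}$), through the center of a star-node ($\cls[C]{S}$), or through an extremity of a star-node ($\cls[X]{S}$). This exhaustive, disjoint case split yields the first production $\clsBGrl = \clsAtom_{\mLeaf}\times(\cls[C]{S}+\cls[X]{S}+\cls{K})$. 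The remaining three equations come from a purely local analysis at each internal node, determining what its other marker vertices may attach to, using the reducedness conditions (a)--(c) of Cunningham's theorem together with condition (b) of Theorem~\ref{thm:split-characterization-block}.

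For the clique rule, after entering through one edge there remain at least two further, interchangeable marker vertices (degree at least three, by condition (a)), hence an unordered set of size at least two. Such an edge cannot lead to another clique-node by Cunningham's condition (b), nor to the center of a star-node, since that center would then be attached to an internal node rather than a leaf, violating Theorem~\ref{thm:split-characterization-block}(b); the only survivors are a leaf $\clsAtom$ or a star entered through an extremity $\cls[X]{S}$. The $\cls[C]{S}$ rule is parallel: entering through the center leaves at least two interchangeable extremities, each of which may be $\clsAtom$, $\cls{K}$, or $\cls[X]{S}$—never a star entered through its center, this time because attaching an extremity to another star's center is forbidden by Cunningham's condition (c) (and again by the center-to-leaf requirement). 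For $\cls[X]{S}$, the entering extremity and the center are two distinguished marker vertices, and there is at least one further extremity; the center is forced to be a leaf by Theorem~\ref{thm:split-characterization-block}(b), which accounts for the leading factor $\clsAtom$, while the other extremities form an unordered set of size at least one with the same three options. The Cartesian product separates the distinguished (leaf-bearing) center from the interchangeable extremities.

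The step I expect to require the most care is justifying why $\cls[C]{S}$ appears only in the top-level equation and never in the recursive right-hand sides. The key observation is that a star-node is entered through its center (in the downward orientation) precisely when its parent is the node attached to that center; condition (b) of Theorem~\ref{thm:split-characterization-block} forces that parent to be a leaf, and in a rooted tree the only leaf that can serve as the parent of an internal node is the root itself. A star whose center is attached to some \emph{non-root} leaf is therefore reached from the root through one of its extremities, so it is an $\cls[X]{S}$ node with that center-leaf supplying the leading $\clsAtom$ factor. Consequently every non-root star-node is entered through an extremity, which is exactly why $\cls[X]{S}$, and not $\cls[C]{S}$, occurs in the productions for $\cls{K}$, $\cls[C]{S}$, and $\cls[X]{S}$.

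Finally I would close the argument by verifying that the grammar is in bijection with the valid trees. For soundness, every tree generated satisfies the degree condition (a) (each internal node acquires at least three incident edges), never creates a clique--clique or center--extremity adjacency, and attaches every star-center to a leaf, so it is a reduced clique-star tree of the required form; by Theorem~\ref{thm:split-characterization-block} its accessibility graph is a block graph. For completeness and unique generation, the rooting fixes a single downward orientation of $T$, the case analysis at each node is exhaustive and disjoint, and the set constructions correctly quotient by the symmetry among the indistinguishable extremities and clique edges, so each such tree is produced exactly once. Together these establish the claimed specification of $\clsBGrl$.
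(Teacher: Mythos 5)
Your proposal is correct and follows essentially the same route as the paper: root at a leaf, split the top level into the three ways of entering the adjacent internal node, and derive each production by a local case analysis using Cunningham's reducedness conditions together with the center-to-leaf constraint of Theorem~\ref{thm:split-characterization-block}. Your extra observation that $\cls[C]{S}$ can only occur at the top level (since a non-root star whose center meets a leaf is necessarily entered through an extremity) is exactly the point the paper makes in its remark following the theorem statement.
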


\noindent This grammar is similar to that of distance-hereditary
graphs~\cite{ChFuLu14}. The constraint that the centers of all star-nodes
are attached to leaves means essential that the rule
$\cramped{\cls[C]{S}}$ can only be reached as a starting point when we are
describing what the root vertex might be connected to (from the initial
rule, $\clsBGrl$).

For the sake of comprehensiveness, we give this proof in full detail.
However since the following proofs are fairly similar, we will tend to
abbreviate them.

\begin{proof}
  We begin with the rule for a star-node entered by its center,
  \begin{align*}
    \cls[C]{S} = \Set[\geqslant2]{\clsAtom+\cls{K}+\cls[X]{S}}\text{.}
  \end{align*}
  This equation specifies that a subtree rooted at a star-node, linked to
  its parent by its center, has at least 2 unordered children attached to
  the extremities of the star-node: each extremity can either lead to a
  leaf, a regular clique-node, or another star-node entered through an
  extremity (but not another star-node entered through its center since
  the tree is reduced). The lower bound of 2 children is due to the fact
  that in a reduced split-decomposition tree, every internal node has degree at least 3,
  one of which is the star-node's center.

  Next,
  \begin{align*}
    \cls[X]{S} = \clsAtom\times\Set[\geqslant1]{\clsAtom+\cls{K}+\cls[X]{S}}\text{,}
  \end{align*}
  corresponding to a subtree rooted at a star-node, linked to its parent
  by an extremity. This star-node can be be exited either via its center
  and lead to a leaf $\clsAtom$ (the only type of element the center of a
  star-node can be connected to, following
  Theorem~\ref{thm:split-characterization-block}), or via some extremity
  and lead to a leaf, a regular clique-node, or another star-node entered
  through an extremity (but not another star-node entered through its
  center, as that is forbidden in reduced trees).
  
  Next, the equation corresponding to a clique-node,
  \begin{align*}
    \cls{K} = \Set[\geqslant2]{\clsAtom+\cls[X]{S}}\text{.}
  \end{align*}
  A clique-node has a degree of at least three, so a clique-rooted subtree
  can be exited from a set of at least two children and reach a leaf or a
  star-node through its extremity. It cannot reach another clique-node
  since the tree is reduced, and it cannot enter a star-node through its
  center since, again according to
  Theorem~\ref{thm:split-characterization-block}, star centers are only
  adjacent to leaves.

  Finally, this equation
  \begin{align*}
    \clsBGrl = \clsAtom_{\mLeaf} \times (\cls[C]{S}+\cls[X]{S}+\cls{K})\text{,}
  \end{align*}
  combines the previously introduced terms into a specification for rooted
  split-decomposition trees of block graphs, which are combinatorially equivalent to the
  class of rooted block graphs. It states that a rooted split-decomposition tree of a
  block graph consists of a distinguished leaf
  $\cramped{\clsAtom_{\mLeaf}}$, which is attached to an internal node.
  The internal node could be a clique-node, or a star-node entered through
  either its center or an extremity.
\end{proof}

\noindent With this symbolic specification, and a computer algebra system,
we may extract an arbitrary long enumeration (we've easily extracted
10\,000 terms).

\subsection{Unrooted grammar.%
  \label{subs:block-unrooted-grammar}}

Applying the dissymmetry theorem to the internal nodes and edges of
split-decomposition trees for block graphs gives the following grammar.

\begin{theorem}{\label{thm:unrooted-grammar-block}}
  The class $\clsBG$ of unrooted block graphs is specified by
  \begin{align}
  \clsBG          &= \cls[K]{T} +\cls[S]{T} + \cls[S-S]{T}
       - \cls[S\rightarrow S]{T} - \cls[S-K]{T} \label{eq:dissymmetry-simplified-block}\\[0.4em]
  \cls[K]{T}      &= \Set[\geqslant3]{\clsAtom+\cls[X]{S}}\\
  \cls[S]{T}      &= \clsAtom\times\cls[C]{S} \\
  \cls[S-S]{T}    &= \Set[2]{\cls[X]{S}}\\
  \cls[S\rightarrow S]{T}
                  &= \cls[X]{S}\times\cls[X]{S}\\
  \cls[S-K]{T}    &= \cls{K}\times\cls[X]{S}\\[0.4em]
  \cls[C]{S}      &= \Set[\geqslant2]{\clsAtom+\cls{K}+\cls[X]{S}}\\
  \cls[X]{S}      &= \clsAtom\times\Set[\geqslant1]{\clsAtom+\cls{K}+\cls[X]{S}}\\
  \cls{K}         &= \Set[\geqslant2]{\clsAtom+\cls[X]{S}}
\end{align}
\end{theorem}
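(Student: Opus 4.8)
The plan is to derive this unrooted grammar from the rooted grammar of Theorem~\ref{thm:rooted-grammar-block} by applying the dissymmetry theorem of Eq.~\eqref{eq:dissymmetry} with $\cls{A}=\clsBG$, which rearranges to
\begin{align*}
  \clsBG = {\clsBG}_{\mNode} + {\clsBG}_{\mEdge} - {\clsBG}_{\mDEdge}.
\end{align*}
The task then splits into expressing each of the three rerooted classes as a grammar and simplifying. By the leaf-invariance Lemma~\ref{lem:no-leaves}, I may reroot only at internal nodes and at edges joining two internal nodes; the contributions of leaves and leaf-incident edges cancel in the subtraction, so no leaf term survives. Throughout, the auxiliary rules $\cls[C]{S}$, $\cls[X]{S}$ and $\cls{K}$ are inherited verbatim from the rooted grammar and need no new justification.

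First I would treat the node term. Every internal node is a clique-node or a star-node, so ${\clsBG}_{\mNode} = \cls[K]{T} + \cls[S]{T}$. Rerooting at a clique-node removes its parent edge, so all of its at-least-three incident edges must now be described, each reaching a leaf or a star-node entered through an extremity; this gives $\cls[K]{T} = \Set[\geqslant3]{\clsAtom + \cls[X]{S}}$, the bound being $3$ rather than the $2$ of the rooted $\cls{K}$ precisely because no edge is reserved for a parent. Rerooting at a star-node, I describe its center together with its at-least-two extremities: by condition~(b) of Theorem~\ref{thm:split-characterization-block} the center is attached to a leaf, and the extremities are exactly $\cls[C]{S}$, so $\cls[S]{T} = \clsAtom \times \cls[C]{S}$.

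Next I would enumerate the edge terms, where the block-graph constraints do the real work. Since $T$ is reduced, no edge joins two clique-nodes; and since star centers are attached to leaves, every edge incident to a star-node meets it at an extremity. Hence the only internal edges are extremity-to-extremity star-star edges and star-clique edges, giving ${\clsBG}_{\mEdge} = \cls[S-S]{T} + \cls[S-K]{T}$ with $\cls[S-S]{T} = \Set[2]{\cls[X]{S}}$ (the two sides are interchangeable, hence a set of size two) and $\cls[S-K]{T} = \cls{K}\times\cls[X]{S}$ (the two sides differ in type, hence an ordered product with no symmetry to quotient). For the directed term, orienting a star-star edge yields the ordered pair $\cls[S\rightarrow S]{T} = \cls[X]{S}\times\cls[X]{S}$, while a star-clique edge admits two orientations, each equal to $\cls{K}\times\cls[X]{S}$, so ${\clsBG}_{\mDEdge} = \cls[S\rightarrow S]{T} + 2\,\cls[S-K]{T}$. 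Substituting these into the rearranged identity collapses the $+\cls[S-K]{T}$ and $-2\,\cls[S-K]{T}$ into a single $-\cls[S-K]{T}$ and produces exactly Eq.~\eqref{eq:dissymmetry-simplified-block}.

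The step I expect to be the main obstacle is the case analysis of admissible node and edge types together with the attendant symmetry bookkeeping. One must carefully rule out clique-clique edges and center-incident star edges by invoking reducedness and Theorem~\ref{thm:split-characterization-block}, then correctly distinguish the symmetric undirected star-star edge (an unordered $\Set[2]{\cdot}$) from the asymmetric star-clique edge (a Cartesian product), and finally track the factor of two in the directed star-clique contribution, which is exactly what turns the naive $+\cls[S-K]{T}$ into the net $-\cls[S-K]{T}$ appearing in the final grammar.
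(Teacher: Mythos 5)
Your proposal is correct and follows essentially the same route as the paper's proof: apply the dissymmetry theorem with leaf-invariance, decompose the node-, edge-, and directed-edge-rooted classes by type using reducedness and condition (b) of Theorem~\ref{thm:split-characterization-block}, and cancel the two orientations of the star--clique edge against the undirected $\cls[S-K]{T}$ term to obtain Eq.~\eqref{eq:dissymmetry-simplified-block}. The only cosmetic difference is that you fold the identification $\cls[S\rightarrow K]{T}\simeq\cls[K\rightarrow S]{T}\simeq\cls[S-K]{T}$ directly into the expression for ${\clsBG}_{\mDEdge}$ rather than stating it as a separate bijection, which yields the same final grammar.
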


\noindent As noted in Subsection~\ref{subs:dissymmetry}, in the unrooted
specification, the classes denoted by $\cramped{\cls[\ldots]{T}}$
correspond to trees introduced by the dissymmetry theorem, whereas the
specification of all other classes is identical to the rooted grammar for
block graph split-decomposition trees given in Theorem~\ref{thm:rooted-grammar-block}.

\begin{proof}
  From the dissymmetry theorem, we have the following bijection linking
  rooted and unrooted split-decomposition trees of block graphs,
  \begin{align*}
    \clsBG &= {{\clsBG}_{\mNode}} + 
               {{\clsBG}_{\mEdge}} -
               {{\clsBG}_{\mDEdge}}\text{.}
  \end{align*}
  \noindent Lemma~\ref{lem:no-leaves} allows us to consider only internal
  nodes for the rooted terms. Since block graphs are totally-decomposable
  into star-nodes and clique-nodes, we have the following symbolic
  equation for split-decomposition trees of block graphs rooted at an internal node,
  \begin{align*}
    {\clsBG}_{\mNode} &= \cls[K]{T} + \cls[S]{T}\text{.}
  \end{align*}
  
  \noindent Additionally, when rooting split-decomposition trees of block graphs at an
  undirected edge between internal nodes, the edge could either connect
  two star-nodes or a star-node and a clique-node (recall that clique-nodes
  cannot be adjacent in reduced trees by Theorem~\ref{thm:cunningham}),
  which yields the following symbolic equation for block graph split-decomposition trees
  rooted at an internal undirected edge,
  \begin{align*}
    {{\clsBG}_{\mEdge}} &= \cls[S-S]{T} + \cls[S-K]{T}\text{.}
  \end{align*}
  
  \noindent Finally, when rooting split-decomposition trees of block graphs at a
  directed edge between internal nodes, the edge could either go from a
  star-node to a clique-node, a clique-node to a star-node, or a star-node
  to another star-node (again, there are no adjacent clique-nodes by
  Theorem~\ref{thm:cunningham} of reduced trees), giving the following
  symbolic equation for block graph split-decomposition trees rooted at an internal
  directed edge,
  \begin{align*}
    {{\clsBG}_{\mDEdge}} &= \cls[S\rightarrow K]{T} + \cls[K\rightarrow S]{T} + \cls[S\rightarrow S]{T} \text{.}
  \end{align*}
  
  \noindent Combining the above equations with the dissymmetry equation
  for block graph split-decomposition trees gives
  \begin{align*}
    \clsBG &= \cls[K]{T} + \cls[S]{T}\\
           &+ \cls[S-S]{T} + \cls[S-K]{T}\\
           &- \cls[S\rightarrow K]{T} -
              \cls[K\rightarrow S]{T} -
              \cls[S\rightarrow S]{T} \text{.}
  \end{align*}
  
  \noindent We next observe the following bijection between ptolemaic
  trees rooted at an edge between a clique-node and a star-node,
  $\cls[S\rightarrow K]{T} \simeq \cls[K\rightarrow S]{T}$
  $ \simeq \cls[S-K]{T}$. This due to the fact that star- and clique-nodes
  are distinguishable, so an edge connecting a star-node and a clique-node
  bears an implicit direction. (One can, for example, define the direction
  to always be out of the clique-node into the star-node.) Simplifying
  accordingly, we arrive at
  Equation~\eqref{eq:dissymmetry-simplified-block},
  \begin{align*}
    \clsPG &= \cls[K]{T} +\cls[S]{T} + \cls[S-S]{T} - \cls[S\rightarrow S]{T} - \cls[S-K]{T}
  \end{align*}
  
  We will now discuss the symbolic equations for rooted split-decomposition trees of block graphs, starting with the following equation,
  \begin{align*}
     \cls[K]{T} = \Set[\geqslant3]{\clsAtom+\cls[X]{S}}\text{.}
  \end{align*} 
  \noindent
  This equation states that the split-decomposition tree of a block graph rooted at a clique-node can be specified as a set of at least three subtrees (since internal nodes in reduced split-decomposition trees have degree $\geqslant3$), each of which can lead to either a leaf or a star-node entered through its center; they cannot lead to clique-nodes as there are no adjacent clique-nodes in reduced split-decomposition trees, and they cannot lead to star-nodes through their centers, as centers of star-nodes in block graph split-decomposition trees only connect to leaves.

  Next, we will consider the equation,
  \begin{align*}
    \cls[S]{T}  = \clsAtom\times\cls[C]{S}\text{.}
  \end{align*}
  \noindent 
  which specifies a block graph split-decomposition tree rooted at a star-node. The specification of the subtrees of the distinguished star-node depends on whether they are connected to the center or an extremity of the root. The center of the root can only be attached to a leaf, while the subtrees connected to the extremities of the distinguished star-node are exactly those specified by an $\cls[C]{S}$.
  
  The other three rooted tree equations follow from with the same logic.
\end{proof}


\section{Ptolemaic graphs\label{sec:ptolemaic}}

\begin{figure*}
  \centering
  \includegraphics[scale=0.4]{./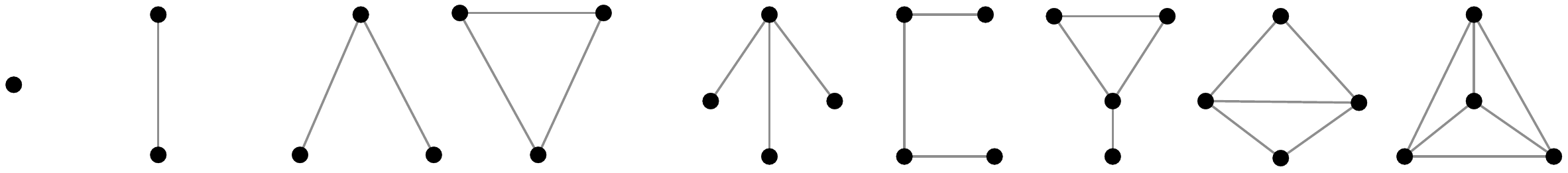}
  \caption{\label{fig:small-ptol}%
    Small (unrooted, unlabeled) ptolemaic graphs.}
\end{figure*}

Ptolemaic graphs were introduced by Kay and Chartrand~\cite{KaCh65} as the
class of graph that satisfied the same properties as a ptolemaic space.
Later, it was shown by Howorka~\cite{Howorka81} that these graphs are
exactly the intersection of distance-hereditary graphs and chordal graphs;
beyond that, relatively little is known about ptolemaic
graphs~\cite{UeUn09}, and in particular, their enumeration was hitherto
unknown.

\subsection{Characterization.}

\begin{definition}
  \label{def:graph-ptolemaic}%
  A graph $G$ is \emph{ptolemaic} if any four vertices $u,v,w,x$ in the
  same connected component satisfy the ptolemaic inequality~\cite{KaCh65}:
  \begin{align*}
    d_G(u,v) \cdot d_G(w,x) &\leqslant d_G(u,w) \cdot d_G(v,x) \\
                            &{}\quad + d_G(u,x) \cdot d_G(v,w).
  \end{align*}
  Equivalently, ptolemaic graphs are graphs that are both chordal and
  distance-hereditary~\cite[\S 2]{Howorka81}.
\end{definition}

\noindent This second characterization is the one that we will use:
indeed, by a reasoning similar to that provided in the proof of
Theorem~\ref{thm:split-characterization-block}, we have that
distance-hereditary graphs do not contain any $\cramped{C_{\geqslant 5}}$,
and chordal graphs do not contain any $\cramped{C_{\geqslant 4}}$; by
virtue of being a distance-hereditary graph (described by a clique-star
tree), we thus need only worry about the forbidden $\cramped{C_4}$ induced
subgraphs. As it so happens, we already have a characterization of a
split-decomposition tree which avoids such cycles.

\begin{theorem}[split-decomposition tree characterization of ptolemaic
  graphs\footnote{This characterization was given, but not proven, by
    Paul~\cite[p.~4]{Paul14} in an enlightening encyclopedia article
    related to split-decomposition.}]%
  \label{thm:split-characterization-ptol}%
  A graph $G$ with the reduced split-decomposition tree $(T,\mathcal{F})$ is ptolemaic
  if and only if
  \begin{enumerate}[label=(\alph*), noitemsep, nosep]
    \item $T$ is a clique-star tree;
    \item there are no center-center paths in $T$.
  \end{enumerate}
\end{theorem}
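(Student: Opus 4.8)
The plan is to reduce Theorem~\ref{thm:split-characterization-ptol} almost entirely to results already established, treating it as a direct corollary of the $\cramped{C_4}$-characterization. First I would recall the characterization we will actually use: by Howorka~\cite{Howorka81}, ptolemaic graphs are exactly the graphs that are both distance-hereditary and chordal. The strategy is to translate each of these two constraints into a statement about the reduced split-decomposition tree $T$, and then show that together they are equivalent to conditions (a) and (b).

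For the forward direction $[\Rightarrow]$, suppose $G$ is ptolemaic. Being distance-hereditary, $G$ is totally decomposable, so by Lemma~\ref{lem:dh-split-characterization} its reduced split-decomposition tree $T$ is a clique-star tree, which gives condition (a). It remains to derive condition (b). Here the key observation is that a distance-hereditary graph contains no induced $\cramped{C_{\geqslant 5}}$, so the only induced cycles of length $\geqslant 4$ it could contain are $\cramped{C_4}$'s; chordality forbids all induced cycles of length $\geqslant 4$, hence for a distance-hereditary graph chordality is equivalent to being $\cramped{C_4}$-free. Since $G$ is chordal, it is therefore $\cramped{C_4}$-free, and by Lemma~\ref{lem:split-characterization-c4} this is equivalent to $T$ having no center-center paths, which is condition (b).

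The reverse direction $[\Leftarrow]$ runs the same equivalences backwards. Assume $T$ satisfies (a) and (b). Condition (a) together with Lemma~\ref{lem:dh-split-characterization} tells us $G$ is distance-hereditary, hence $\cramped{C_{\geqslant 5}}$-free. Condition (b) together with Lemma~\ref{lem:split-characterization-c4} tells us $G$ is $\cramped{C_4}$-free. Combining these, $G$ has no induced cycle of length $\geqslant 4$ at all, so $G$ is chordal. Being both distance-hereditary and chordal, $G$ is ptolemaic by Howorka's characterization, completing the proof.

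Structurally this is the same argument already sketched inside the proof of Theorem~\ref{thm:split-characterization-block}, so I do not expect a serious obstacle; the only point requiring care is the logical step that, \emph{within the distance-hereditary class}, chordality is equivalent to $\cramped{C_4}$-freeness. The hard part is really just making precise that distance-hereditary graphs cannot contain induced cycles of length $5$ or more, which lets us reduce chordality (a condition about all long cycles) to the single forbidden $\cramped{C_4}$ handled by Lemma~\ref{lem:split-characterization-c4}. Once that reduction is stated cleanly, the theorem follows by chaining Lemma~\ref{lem:dh-split-characterization}, Lemma~\ref{lem:split-characterization-c4}, and Howorka's intersection characterization, with no new combinatorial work on the tree itself.
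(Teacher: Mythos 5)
Your proposal is correct and follows exactly the paper's own route: Howorka's characterization of ptolemaic graphs as distance-hereditary $\cap$ chordal, the observation that distance-hereditary graphs contain no induced $\cramped{C_{\geqslant 5}}$ so chordality reduces to $\cramped{C_4}$-freeness, and then Lemma~\ref{lem:dh-split-characterization} for condition (a) and Lemma~\ref{lem:split-characterization-c4} for condition (b). The paper merely states this more tersely; your version spells out the two directions explicitly but adds no new ideas.
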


\begin{proof}
  Ptolemaic graphs are exactly the intersection of distance-hereditary
  graphs and chordal graphs. The first condition in this theorem addresses
  the fact that distance-hereditary graphs are exactly the class of
  totally decomposable with respect to the split-decomposition, and the
  second condition reflects the fact that, by
  Lemma~\ref{lem:split-characterization-c4}, center-center paths
  correspond to induced $\cramped{C_4}$ subgraphs, the defining forbidden
  subgraphs for chordal graphs.
\end{proof}

\subsection{Rooted grammar.%
  \label{subs:ptolemaic-rooted-grammar}}

Equipped with the characterization of a bijective split-decomposition tree
representation of ptolemaic graphs, we are now ready to enumerate
ptolemaic graphs. In this subsection, we begin by providing a grammar for
rooted split-decomposition trees of ptolemaic graphs, which can be used to enumerate
labeled ptolemaic graphs. Next, we derive the unlabeled enumeration.

\begin{theorem}{\label{thm:rooted-grammar}}
  The class $\clsPGrl$ of ptolemaic graphs rooted at a vertex is specified by
  \begin{align}
    \clsPGrl        &= \clsAtom_{\mLeaf} \times (\cls[C]{S}+\cls[X]{S}+\cls{K})\\
    \cls[C]{S}      &= \Set[\geqslant2]{\clsAtom+\cls{K}+\cls[X]{S}}\\
    \cls[X]{S}      &= (\clsAtom+\cls{\bar{K}})\times\Set[\geqslant1]{\clsAtom+\cls{K}+\cls[X]{S}}\\
    \cls{K}         &= \cls[C]{S}\times\Set[\geqslant1]{\clsAtom+\cls[X]{S}}+\Set[\geqslant2]{\clsAtom+\cls[X]{S}}\\
    \cls{\bar{K}}   &= \Set[\geqslant2]{\clsAtom+\cls[X]{S}}
  \end{align}
\end{theorem}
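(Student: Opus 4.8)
The plan is to combine the structural characterization of Theorem~\ref{thm:split-characterization-ptol} with a rooted decomposition of the split-decomposition tree, exactly as was done for block graphs in Theorem~\ref{thm:rooted-grammar-block}, and then to isolate the single genuinely new feature: tracking whether an alternated path emanating from a star-center could reach a \emph{second} star-center. By Theorem~\ref{thm:split-characterization-ptol}, a rooted ptolemaic graph corresponds bijectively (via Cunningham's uniqueness, Theorem~\ref{thm:cunningham}) to a reduced clique-star tree, rooted at one of its leaves, in which there is \emph{no center-center path}. Since the vertices of $G$ are the leaves of $T$, rooting at a vertex is rooting at a leaf, yielding the top-level rule $\clsPGrl=\clsAtom_{\mLeaf}\times(\cls[C]{S}+\cls[X]{S}+\cls{K})$: the root leaf attaches to its unique internal neighbour, which is either a clique, a star entered through its center, or a star entered through an extremity. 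It then remains to justify each production by reading off the admissible neighbours of a node and intersecting the reduced-tree adjacency constraints of Theorem~\ref{thm:cunningham} with the ptolemaic no-center-center-path constraint.

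First I would dispatch the rules inherited verbatim from the block and distance-hereditary grammars. The rule $\cls[C]{S}=\Set[\geqslant2]{\clsAtom+\cls{K}+\cls[X]{S}}$ records that a star entered through its center has at least two extremities (degree $\geqslant 3$), each leading to a leaf, a clique, or another star entered through an extremity---but never to a star \emph{center}, since a star-center--star-extremity edge is forbidden in a reduced tree. The Cartesian-product shape of $\cls[X]{S}=(\clsAtom+\cls{\bar{K}})\times\Set[\geqslant1]{\clsAtom+\cls{K}+\cls[X]{S}}$ reflects that, once a star is entered through an extremity, its center is distinguished from its remaining (unordered) extremities, the latter being described exactly as in $\cls[C]{S}$.

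The crux lies in the center factor of $\cls[X]{S}$ together with the two clique symbols. The center $c_u$ of a star is precisely a potential endpoint of a center-center path (Definition~\ref{def:center-center}): an alternated path may leave $u$ at $c_u$ without using any interior edge of $u$, so its center-child must not permit such a path to terminate at another star-center. This forces three facts, which I would establish in turn. (i) $c_u$ cannot be adjacent to another star-center (that single edge is already a center-center path), nor to a star-extremity (forbidden in reduced trees); hence among internal neighbours only a clique is possible, giving the center factor $\clsAtom+\cls{\bar{K}}$. (ii) A clique may be adjacent to \emph{at most one} star-center, because two star-center neighbours $c_{v_1},c_{v_2}$ of a clique $K$ yield the center-center path $c_{v_1}\to K\to c_{v_2}$ through a single interior edge; this is exactly the dichotomy $\cls{K}=\cls[C]{S}\times\Set[\geqslant1]{\clsAtom+\cls[X]{S}}+\Set[\geqslant2]{\clsAtom+\cls[X]{S}}$ (one star-center branch, or none). (iii) The clique hanging off $c_u$ must carry \emph{no} star-center branch at all---otherwise $c_u\to K\to c_v$ would be a center-center path---which is the restricted symbol $\cls{\bar{K}}=\Set[\geqslant2]{\clsAtom+\cls[X]{S}}$, explaining why $\cls{\bar{K}}$ and not $\cls{K}$ sits at the center of $\cls[X]{S}$.

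Here is the hard part and how I would close it. The three local observations are necessary, but I must show they are \emph{sufficient}: that using the permissive $\cls{K}$ at every other position never creates a center-center path. I would phrase this as an invariant maintained along the recursion---call a tree position \emph{exposed} if some alternated path from a star-center endpoint strictly above it reaches the edge to its parent---and prove by induction on depth that exposed cliques are generated by $\cls{\bar{K}}$ while all others are generated by $\cls{K}$. The delicate case is a center-center path whose apex lies strictly between its two endpoints: following it upward from $c_u$, one passes alternately through cliques (one interior edge each) and through intermediate stars traversed center-to-extremity, and I must verify that each such step either terminates at a leaf, lands on a star-extremity (never a clean center endpoint, since reaching that star's center would consume its own interior edge), or re-enters a position the grammar already marks exposed---so that the only remaining way to reach a second star-center endpoint is through a clique adjacent to two centers, which $\cls{K}$ forbids. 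Showing that this exposure invariant is preserved across both the $\Set$ and the $\times$ constructors, in both the soundness and the completeness directions, is where essentially all the work lies; the remaining rules then follow by the same bookkeeping as in Theorem~\ref{thm:rooted-grammar-block}.
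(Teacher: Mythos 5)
Your proposal is correct and follows essentially the same route as the paper: the same rule-by-rule justification via the reduced-tree adjacency constraints of Theorem~\ref{thm:cunningham}, with the same state-tracking device of splitting cliques into regular $\cls{K}$ (at most one $\cls[C]{S}$ child) and prohibitive $\cls{\bar{K}}$ (no $\cls[C]{S}$ child) to exclude center-center paths. If anything you are more explicit than the paper, whose proof contents itself with the local necessity of each production and leaves implicit the global sufficiency argument (that no center-center path can thread through several intermediate nodes), whereas you correctly isolate that as the step requiring an inductive invariant.
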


\begin{proof}
  The interesting part of this grammar is that, to impose the restriction
  on center-center paths (condition (b) of
  Theorem~~\ref{thm:split-characterization-ptol}), we must distinguish
  between two classes of clique-nodes, depending on the path through which
  we have reached them in the rooted tree:
  \begin{itemize}[noitemsep, nosep]
  \item $\cls{\bar{K}}$: these are clique-nodes for which the most recent
    star-node on their ancestorial path has been exited through its
    center; we call these clique-nodes \emph{prohibitive} to indicate that
    they cannot be connected to the center of a star-node;
  \item $\cls{K}$: all other clique-nodes, which we by contrast call
    \emph{regular}.
\end{itemize}

\noindent Recall that the split-decomposition tree of ptolemaic graphs
must, overall, satisfy the following constraints:
\begin{enumerate}[label=(\alph*), noitemsep, nosep]
\item center-center paths are forbidden
  (Theorem~\ref{thm:split-characterization-ptol});
\item internal nodes must have degree at least 3
  (Thm.~\ref{thm:cunningham});
\item the center of a star-node cannot be incident to the extremity of
  another star-node (Theorem~\ref{thm:cunningham});
\item two clique-nodes cannot be adjacent (Theorem~\ref{thm:cunningham}).
\end{enumerate}

\noindent We can now prove the correctness of the grammar. We begin with
the following equation
\begin{align*}
  \cls[C]{S} = \Set[\geqslant 2]{\clsAtom+\cls{K}+\cls[X]{S}}
\end{align*}
which specifies that a subtree rooted at a star-node, linked to its parent
by its center, has at least 2 unordered children as the extremities of the
star-node: each extremity can either lead to a leaf, a regular
clique-node, or another star-node entered through an extremity. The
children subtrees cannot be star-nodes entered through their center, since
the tree is reduced. The lower bound of two children is due to the first
condition of reduced split-decomposition trees (Theorem~\ref{thm:cunningham}), which
specifies that every internal node has degree at least 3.

We now consider the next equation
\begin{align*}
  \cls[X]{S} = (\clsAtom+\cls{\bar{K}})\times\Set[\geqslant1]{\clsAtom+\cls{K}+\cls[X]{S}}
\end{align*}
The disjoint union in this equation indicates that a subtree rooted at a
star-node, linked to its parent by an extremity, can be exited in two
ways, either through the center, or through another extremity.

If the star-node is exited through its center, it can either enter a leaf
or a prohibitive clique-node. It cannot enter a $\cls[X]{S}$ by the third
condition of reduced split-decomposition trees (Theorem~\ref{thm:cunningham}), and it
cannot enter a $\cls[C]{S}$ as that would be a center-center path.
Furthermore, it has to enter a prohibitive clique-node $\cls{\bar{K}}$
rather than a regular clique-node $\cls{K}$ to keep track of the fact that
a star-node has been exited from its center on the current path and ensure
that no another star-node will not be entered through its center.

If the star-node entered from an extremity is exited through an extremity,
it has a set of at least one other extremity to choose from. Each of those
extremities can lead to a either a leaf, a clique-node, or another
star-node entered through its center. It cannot lead to an $\cls[C]{S}$,
as that would be a center-center path.

We next discuss the equation
\begin{align*}
  \cls{K} = \cls[C]{S}\times\Set[\geqslant1]{\clsAtom+\cls[X]{S}}+\Set[\geqslant2]{\clsAtom+\cls[X]{S}}
\end{align*}
The disjoint union specifies that a subtree rooted at a regular clique-node can have exactly zero or one $\cls[C]{S}$ as a child. First, a regular-clique-rooted subtree is allowed to have a $\cls[C]{S}$ as a child, since regular clique-nodes are by definition not on potential center-center paths. However, a regular-clique-rooted subtree cannot have more than one $\cls[C]{S}$ child, since otherwise there would be a center-center path between the $\cls[C]{S}$ children through the clique-node.

The first summand corresponds to the case where the regular clique-node has exactly one $\cls[C]{S}$ as a child, which can be used to exit the tree. Additionally, the clique-node can be exited via any of the other children besides $\cls[C]{S}$ and reach either a leaf or a star-node entered through an extremity. Note that the clique-node cannot be exited into another clique-node of any kind by the second condition of reduced split-decomposition trees (Theorem~\ref{thm:cunningham}), which indicates that no two clique-nodes are adjacent in a reduced split-decomposition tree.

The second summand corresponds to the case where the regular clique-node has no $\cls[C]{S}$ children. In this case, the regular clique-node can be exited via any of the remaining two or more subtrees that have not been used to enter it. After exiting the clique-node, one arrives at either a leaf or a star-node entered through its extremity. As explained above, is not possible to arrive at a clique-node, since there are no adjacent clique-nodes in reduced split-decomposition trees.

We now take a look at the equation specifying subtrees rooted at prohibitive clique-nodes
\begin{align*}
  \cls{\bar{K}} = \Set[\geqslant2]{\clsAtom+\cls[X]{S}}
\end{align*}
A subtree rooted at a prohibitive clique-node can be exited via any of its set of at least two children and either enter a leaf or enter a star-node through its extremity. Since a prohibitive clique-nodes lies on a path from the center of a star-node, it cannot enter a $\cls[C]{S}$. Additionally, it cannot enter another clique-node of any kind since reduced clique-nodes cannot be adjacent.

Finally, the following equation
\begin{align*}
  \clsPGrl = \clsAtom_{\mLeaf}\times(\cls[X]{S}+\cls[C]{S}+\cls{K})
\end{align*}
combines all pieces into a symbolic specification for rooted ptolemaic graphs. It states that a rooted ptolemaic graph consists of a distinguished leaf $\cramped{\clsAtom_{\mLeaf}}$, which is attached to an internal node. The internal node could be star-node entered through either its center or an extremity, or it could be a regular clique-node.

\end{proof}

\noindent Given this grammar for ptolemaic graphs, we can produce the
exact enumeration for rooted labeled ptolemaic graphs using a computer
algebraic system. Furthermore, we can derive the enumeration of
\emph{unrooted} labeled ptolemaic graphs by normalizing the counting
sequence by the number of possible ways to distinguish a vertex as the
root. This normalization is easy for labeled graphs, since the labels
prevent the formation of symmetries. Therefore, since each vertex is
equally likely to be chosen as the root, the number of \emph{unrooted}
labeled graphs of size $n$ is simply the number of \emph{rooted} labeled
graphs divided by $n$.

\subsection{Unrooted grammar.%
  \label{subs:ptolemaic-unrooted-grammar}}

\begin{theorem}{\label{thm:unrooted-grammar-ptol}}
  The class $\clsPG$ of unrooted ptolemaic graphs is specified by
  \begin{align}
  \clsPG          &= \cls[K]{T} +\cls[S]{T} + \cls[S-S]{T} - \cls[S\rightarrow S]{T} - \cls[S-K]{T} \label{eq:dissymmetry-simplified-ptol}\\
  \cls[K]{T}      &= \cls[C]{S}\times\Set[\geqslant2]{\clsAtom+\cls[X]{S}} + \Set[\geqslant3]{\clsAtom+\cls[X]{S}}\\
  \cls[S]{T}      &= \cls[C]{S}\times(\clsAtom+\cls{\bar{K}}) \\
  \cls[S-S]{T}    &= \Set[2]{\cls[X]{S}}\\
  \cls[S\rightarrow S]{T}
                  &= \cls[X]{S}\times\cls[X]{S}\\
  \cls[S-K]{T}    &= \cls{K}\times\cls[X]{S} + \cls{\bar{K}}\times\cls[C]{S}\\
  \cls[C]{S}      &= \Set[\geqslant 2]{\clsAtom+\cls{K}+\cls[X]{S}}\\
  \cls[X]{S}      &= (\clsAtom+\cls{\bar{K}})\times\Set[\geqslant1]{\clsAtom+\cls{K}+\cls[X]{S}}\\
  \cls{K}         &= \cls[C]{S}\times\Set[\geqslant1]{\clsAtom+\cls[X]{S}}+\Set[\geqslant2]{\clsAtom+\cls[X]{S}}\\
  \cls{\bar{K}}   &= \Set[\geqslant2]{\clsAtom+\cls[X]{S}}
\end{align}
\end{theorem}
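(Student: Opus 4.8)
The plan is to follow the same dissymmetry-theorem bookkeeping used for block graphs in Theorem~\ref{thm:unrooted-grammar-block}, reusing verbatim the rooted building blocks $\cls[C]{S}$, $\cls[X]{S}$, $\cls{K}$, $\cls{\bar{K}}$ of Theorem~\ref{thm:rooted-grammar}, and only adapting the handling of the regular/prohibitive clique distinction to the newly rerooted pieces. Concretely, I would start from the instance of Equation~\eqref{eq:dissymmetry} for this class, ${\clsPG} = {\clsPG}_{\mNode} + {\clsPG}_{\mEdge} - {\clsPG}_{\mDEdge}$, and invoke Lemma~\ref{lem:no-leaves} to restrict all three rooted terms to internal nodes and internal edges only.

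Next I would decompose each rooted term by the type of the distinguished internal node or edge. A reduced clique-star tree has only star- and clique-nodes, so ${\clsPG}_{\mNode} = \cls[K]{T} + \cls[S]{T}$. For the undirected edge, Theorem~\ref{thm:cunningham} forbids clique-clique adjacencies, and Theorem~\ref{thm:split-characterization-ptol} forbids center-center adjacencies (while Cunningham forbids center-extremity), so a star-star edge must join two extremities and a star-clique edge is the only other possibility: ${\clsPG}_{\mEdge} = \cls[S-S]{T} + \cls[S-K]{T}$. The directed-edge term likewise splits as ${\clsPG}_{\mDEdge} = \cls[S\rightarrow S]{T} + \cls[S\rightarrow K]{T} + \cls[K\rightarrow S]{T}$. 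Because stars and cliques are distinguishable, a star-clique edge carries an implicit orientation, giving $\cls[S\rightarrow K]{T} \simeq \cls[K\rightarrow S]{T} \simeq \cls[S-K]{T}$; substituting and cancelling then collapses the expression to Equation~\eqref{eq:dissymmetry-simplified-ptol}, exactly as in the block case.

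The substance of the proof lies in specifying the five rerooted classes so that the center-center prohibition survives. For $\cls[S-S]{T}$ and $\cls[S\rightarrow S]{T}$ the two stars meet extremity-to-extremity, so each is an $\cls[X]{S}$, unordered (a $2$-set $\Set[2]{\cls[X]{S}}$) for the undirected edge and ordered $\cls[X]{S}\times\cls[X]{S}$ for the directed one. For $\cls[S-K]{T}$ the clique attaches either to a star extremity, giving a regular clique and a star entered through an extremity, $\cls{K}\times\cls[X]{S}$; or to the star center, which puts the clique downstream of a star-center and hence makes it prohibitive, $\cls{\bar{K}}\times\cls[C]{S}$. For $\cls[S]{T}$, rooting at a star exposes both its $\geqslant 2$ extremities (described by $\cls[C]{S}$) and its center, which may lead only to a leaf or to a prohibitive clique $\cls{\bar{K}}$, giving $\cls[C]{S}\times(\clsAtom+\cls{\bar{K}})$. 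Finally, for $\cls[K]{T}$, since rooting at a clique frees all $\geqslant 3$ children, at most one of them may be a star-center subtree $\cls[C]{S}$ --- two would create a center-center path through the clique --- which yields the two summands $\cls[C]{S}\times\Set[\geqslant2]{\clsAtom+\cls[X]{S}}$ and $\Set[\geqslant3]{\clsAtom+\cls[X]{S}}$.

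I expect the main obstacle to be precisely this last point: correctly threading the regular-versus-prohibitive state of clique-nodes through every rerooting, rather than the (routine) dissymmetry accounting. One must verify case by case that each center or star-clique incidence newly exposed by rerooting is assigned the clique variant that keeps Theorem~\ref{thm:split-characterization-ptol} intact; in particular that $\cls[K]{T}$ admits at most one $\cls[C]{S}$ child and that $\cls[S]{T}$ and the center-incident summand of $\cls[S-K]{T}$ both force the prohibitive clique $\cls{\bar{K}}$. Once these local assignments are checked against Lemma~\ref{lem:split-characterization-c4}, the remaining blocks $\cls[C]{S}$, $\cls[X]{S}$, $\cls{K}$, $\cls{\bar{K}}$ carry over unchanged from Theorem~\ref{thm:rooted-grammar}, and the unrooted specification follows.
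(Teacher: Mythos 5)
Your proposal is correct and follows essentially the same route as the paper: apply the dissymmetry theorem with Lemma~\ref{lem:no-leaves}, split the node- and edge-rooted terms by type, use the implicit orientation of star--clique edges to collapse to Equation~\eqref{eq:dissymmetry-simplified-ptol}, and then specify each rerooted class while threading the regular/prohibitive clique distinction through the new roots. You in fact spell out the cases for $\cls[S-S]{T}$, $\cls[S\rightarrow S]{T}$, and $\cls[S-K]{T}$ that the paper dispatches with ``the same logic,'' and your assignments there (in particular $\cls{\bar{K}}\times\cls[C]{S}$ for the center-incident star--clique edge) agree with the stated grammar.
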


\begin{proof}
  Applying the dissymmetry theorem in a similar manner to the proof of the
  unrooted grammar of block graphs, we obtain the following formal
  equation:
  \begin{align*}
    \begin{split}
      \clsPG &= \cls[K]{T} + \cls[S]{T} + \cls[S-S]{T} + \cls[S-K]{T}\\
             &- \cls[S\rightarrow K]{T} - \cls[K\rightarrow S]{T} -
                \cls[S\rightarrow S]{T}\text{.}
    \end{split}
  \end{align*}
  Notably, even though we distinguish between prohibitive $\cls{\bar{K}}$
  and regular $\cls{K}$ clique-nodes in the rooted grammar, this
  distinction disappears when rerooting the trees for the dissymmetry
  theorem. This is because the prohibitive or regular nature of a
  clique-node depends on an \emph{implicitly directed} path leading to it
  from the root; however when rerooting the tree, the clique-node in
  question becomes the new root, and all (implicitly directed) paths
  originate from it\footnote{This notion is implicitly used in the
    unrooted grammar for block graphs---and previously by
    Chauve~\etal~\cite{ChFuLu14}, for distance-hereditary and 3-leaf power
    graphs---in which we only reroot at a star-node $\cls{S}$ without
    distinguishing whether it was entered by its center or an extremity,
    precisely because it is the new root, and therefore all paths lead
    away from it.}.
  
  We can then simplify the dissymmetry theorem equation above in the same
  manner as for block graphs:
  \begin{align*}
    \clsPG &= \cls[K]{T} +\cls[S]{T} + \cls[S-S]{T}
             - \cls[S\rightarrow S]{T} - \cls[S-K]{T}\text{.}
  \end{align*}
  We then discuss the rerooted terms of the grammar, starting with the
  following equation,
  \begin{align*}
    \cls[K]{T} &= \cls[C]{S}\times\Set[\geqslant2]{\clsAtom+\cls[X]{S}} +
                  \Set[\geqslant3]{\clsAtom+\cls[X]{S}}\text{.}
  \end{align*} 
  The disjoint union translates the fact that in a ptolemaic tree rerooted
  at a clique-node, the root can have either zero or one $\cls[C]{S}$ as a
  subtree. (A clique-node having more than one $\cls[C]{S}$ subtree would
  induce a center-center path, which cannot exist in ptolemaic split-decomposition trees
  by Theorem~\ref{thm:split-characterization-ptol}.)
  
  The first summand corresponds to the case where the clique-node at which
  the split-decomposition tree is rooted has one $\cls[C]{S}$ as a subtree. The
  clique-node root can have a set of at least two other subtrees, each of
  which can lead to either a leaf or a star-node entered from an
  extremity.
  
  The second summand corresponds to the case where the clique-node at
  which the split-decomposition tree is rooted has no $\cls[C]{S}$ subtrees, in which
  case it can have a set of at least three other subtrees leading to
  leaves or $\cls[X]{S}$ nodes, but not clique-nodes or $\cls[C]{S}$ as
  explained.
  
  Next, we will consider the equation,
  \begin{align*}
    \cls[S]{T} &= \cls[C]{S}\times(\clsAtom+\cls{\bar{K}})
  \end{align*}
  which specifies a ptolemaic split-decomposition tree rooted at a star-node. The
  specification of the subtrees of the distinguished star-node depends on
  whether they are connected to the center or an extremity of the
  star-node. The subtrees connected to the extremities of the
  distinguished star-node are exactly those specified by an $\cls[C]{S}$,
  and the subtree attached to the center of the distinguished star-node
  can lead to either a leaf or a prohibitive clique-node.
  
  The other three rooted tree equations follow from with the same logic.
\end{proof}

\noindent The first few terms of the enumeration of unlabeled, unrooted
ptolemaic graphs (among others) are available in the
Table~\ref{tab:enum-block-ptol} at the end of this paper.

\section{2,3-Cactus, 3-Cactus and 4-Cactus Graphs%
  \label{sec:cactus}}

The definition of \emph{cactus graphs} is similar to that of a block
graphs. Yet whereas in block graphs (discussed in
Section~\ref{sec:block}), the blocks\footnote{Recall that a block, or
  \emph{biconnected component}, is a maximal subgraph in which every two
  vertex, or every edge belongs to a simple cycle.} are cliques, in a
cactus graph the blocks are cycles. Thus, just as block graphs can be
called clique trees, cacti can be seen as ``\emph{cycle
  trees}''\footnote{Although cactus graphs have been known by many
  different names, including \emph{Husimi Trees} (a term that grew
  contentious because the graphs are not in fact trees~\cite[\S
  3.4]{HaPa73}---although this seems not to have been an issue for
  $k$-trees and related classes!), they have not generally been known by
  the name ``\emph{cycle trees}'', except in a non-graph theoretical
  publication, which rediscovered the concept~\cite{FrJo83}.}. An
alternate definition:

\begin{definition}
  A \emph{cactus} is a connected graphs in which every edge belongs to at
  most one cycle~\cite{HaPa73}
\end{definition}

\noindent We can also conjure further variations on this definition, with
cactus graphs having as blocks, cycles that have size constrained to a set
of positive integers; thus given a set of integers $\Omega$, an
$\Omega$-cactus graph\footnote{Note that it makes no sense for 1 to be in
  $\Omega$ given this definition. We can however have $2\in\Omega$, in
  which case we treat an edge as a cycle of size 2. For example, if
  $2\not\in\Omega$, every vertex must be part of a cycle.} is the class of
cactus graphs of which the cycles have size $m \in \Omega$. In this
section, we discuss cactus graphs for the sets: $\Omega = \set{2,3}$,
$\Omega = \set{3}$ and $\Omega = \set{4}$, following an article by Harary
and Uhlenbeck~\cite{HaUh53}, who use dissimilarity characteristics derived
from Otter's theorem~\cite{Otter48}.


\subsection{2,3-Cactus Graphs.\label{subs:2-3-cactus}}

In this section, we enumerate the family of cactus graphs with
$\Omega=\{2,3\}$. The class of 2,3-cactus graphs is equivalent to the
intersection of block graphs and of cactus graphs\footnote{Block graphs
  can be thought of as a set of cliques sharing at most one vertex
  pairwise, and cactus graphs can be thought of as a set of cycles sharing
  at most one vertex pairwise. The intersection of cycles and cliques are
  those of sizes 1, 2, and 3; however, in the case of one vertex, adding a
  single vertex in this manner to a connected block or cactus graph does
  not change the size of the graph, contradicting the requirement that in
  a combinatorial class, there must be a finite number of objects of any
  fixed size. Therefore, the intersection of block graphs and cactus
  graphs is the family of 2,3-cactus graphs.}, not to be confused with the
class of block-cactus graphs (which are the \emph{union} of block graphs
and cactus graphs~\cite{RaVo98}).


\begin{theorem}[split-decomposition tree characterization of 2,3-cactus
  graphs]{\label{thm:2-3-cactus-split-characterization}}
  A graph $G$ with the reduced split-decomposition tree $(T,\mathcal{F})$ is a
  block-cactus graph if and only if
  \begin{enumerate}[label=(\alph*), noitemsep, nosep]
    \item $T$ is a clique-star tree;
    \item every clique-node has degree 3;
    \item the center of all star-nodes are attached to leaves;
  \end{enumerate}
\end{theorem}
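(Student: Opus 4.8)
The plan is to build directly on the block-graph characterization (Theorem~\ref{thm:split-characterization-block}) and then translate the additional cactus constraint into a statement about clique-node degrees. Recall that 2,3-cactus graphs are exactly the block graphs whose blocks (which are necessarily cliques) all have size at most $3$, since the only graphs that are simultaneously cliques and cycles are $K_2 = C_2$ and $K_3 = C_3$. Thus conditions (a) and (c) are inherited verbatim from Theorem~\ref{thm:split-characterization-block}, and the entire content of the proof reduces to showing that, for a block graph, condition (b)---every clique-node has degree exactly $3$---is equivalent to the restriction that every clique block of $G$ has size at most $3$.

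First I would establish a precise dictionary between clique-nodes of $T$ and clique blocks of $G$. In a reduced clique-star tree of a block graph, a clique-node cannot be adjacent to another clique-node (Theorem~\ref{thm:cunningham}) and the center of every star-node is attached to a leaf (condition (c)); hence each marker vertex of a clique-node $w$ of degree $d$ either leads directly to a leaf or enters a star-node through one of its extremities. Applying Lemma~\ref{lem:alternated-paths-disjoint} to the $d$ markers of $w$ yields $d$ distinct leaves $\ell_1,\dots,\ell_d$, which are pairwise adjacent in $G$ by Lemma~\ref{lem:split-induced-clique}, so $w$ produces an induced clique of size at least $d$. The key refinement is that this clique is exactly a block of size $d$: maximality follows because any further leaf adjacent to all of $\ell_1,\dots,\ell_d$ would have to reach $w$ through one of its already-used markers, contradicting Lemma~\ref{lem:alternated-paths-disjoint} (equivalently, it would create an induced diamond, which is impossible in a block graph). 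Conversely, a clique block of size $d \geqslant 3$ is represented by a single clique-node of degree $d$, its simplicial vertices mapping to leaf-markers and its cut vertices to star-node extremities. This gives the bijection: clique-nodes of degree $d$ correspond to clique blocks of size $d$, for every $d \geqslant 3$.

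With this dictionary in hand both implications are short. Since reduced trees force every internal node to have degree at least $3$ (Theorem~\ref{thm:cunningham}), clique-nodes always have degree $\geqslant 3$, i.e.\ they represent clique blocks of size $\geqslant 3$. For the forward direction, a 2,3-cactus graph is a block graph all of whose blocks have size $\leqslant 3$, so its clique blocks have size exactly $3$ and every clique-node has degree exactly $3$; the size-$2$ blocks (edges) produce no clique-nodes at all, being realized as star-node structures by Lemmas~\ref{lem:split-characterization-pendant} and~\ref{lem:split-characterization-bridge}, so they impose no further condition. For the backward direction, conditions (a) and (c) together with Theorem~\ref{thm:split-characterization-block} make $G$ a block graph, and condition (b) forces every clique block to have size exactly $3$; the remaining blocks, not represented by clique-nodes, can only be edges ($K_2$), so every block of $G$ is $K_2 = C_2$ or $K_3 = C_3$, i.e.\ $G$ is a 2,3-cactus graph.

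The main obstacle is the exact-size claim in the dictionary---that a clique-node of degree $d$ yields a \emph{maximal} clique of size precisely $d$, rather than merely the lower bound of $d$ guaranteed by Lemma~\ref{lem:split-induced-clique}. Establishing this requires using the full block-graph structure (no adjacent clique-nodes, star-centers attached to leaves, and diamond-freeness) to rule out any additional vertex adjacent to the entire clique. A secondary point worth stating carefully is that edge blocks ($K_2$) never surface as degree-$2$ clique-nodes---which reduced trees forbid in any case---so the condition ``every clique-node has degree $3$'' is exactly the right formulation to permit both $C_2$ and $C_3$ blocks while excluding every $K_{\geqslant 4}$.
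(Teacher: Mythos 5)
Your proof is correct, but it takes a genuinely different route from the paper's. The paper proves this theorem by reduction to the 3-cactus characterization (Theorem~\ref{thm:split-characterization-3cacti}): the two characterizations differ only in that the 3-cactus condition forbidding star-extremities from being attached to leaves (or to other star-extremities) is dropped, and the paper argues that the structures thereby re-admitted correspond exactly to bridges, i.e.\ to the $\cramped{C_2}$ blocks that 2,3-cacti permit but 3-cacti do not. You instead build directly on the block-graph characterization (Theorem~\ref{thm:split-characterization-block}) and reduce everything to the equivalence ``every clique-node has degree $3$'' $\Leftrightarrow$ ``no induced $\cramped{K_{\geqslant 4}}$,'' via a clique-node/block dictionary. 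Your route is self-contained relative to Section~\ref{sec:block} and the lemmas of Section~\ref{sec:forbidden} (the paper's proof is a forward reference, since Subsection~\ref{subs:2-3-cactus} precedes Subsection~\ref{subs:3-cactus}), and it makes the role of Lemma~\ref{lem:split-characterization-K4} explicit; the paper's route is shorter and makes the relationship between the 2,3-cactus and 3-cactus grammars transparent. One step of yours deserves tightening: the maximality claim in your dictionary (a clique-node of degree $d$ yields a block of size \emph{exactly} $d$) is not a direct consequence of Lemma~\ref{lem:alternated-paths-disjoint}, and the ``induced diamond'' alternative is not airtight, since an extra common neighbor of $\ell_1,\dots,\ell_d$ could a priori yield a $\cramped{K_{d+1}}$ rather than a diamond, and block graphs do contain large cliques. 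The clean argument is that condition (c) forces every star center to be attached to a leaf, so no alternated path can traverse a star-node; hence the second condition of Lemma~\ref{lem:split-characterization-K4} (no alternated paths between distinct clique-nodes) holds automatically, and the lemma collapses to exactly the degree bound you need. With that observation in place, your elaborate dictionary is not even required, though it is a valid and somewhat stronger statement.
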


\begin{proof}
  This split-decomposition tree characterization is identical to the characterization
  for 3-cactus graphs, except the last condition in the characterization
  of 3-cacti (stating that leaves cannot be attached to extremities of
  star-nodes) is missing here. As we outlined in the proof of the
  characterization of 3-cacti, a leaf attached to a an extremity of an
  star-node corresponds to a vertex of degree 1 in the original graph.
  Unlike with 3-cacti, which required that all vertices be in some cycle
  of size 3, having such a vertex of degree 1 here corresponds to a
  $\cramped{C_2}$ and is allowed. Therefore, the correctness of this
  characterization follows from the proof of the characterization for
  3-cactus graphs.
\end{proof}

\begin{theorem}{\label{thm:rooted-grammar-2-3-cactus}}
  The class $\clsTTCGrl$ of 2,3-cactus graphs rooted at a vertex is
  specified by
  \begin{align}
      \clsTTCGrl      &= \clsAtom_{\mLeaf} \times (\cls[C]{S}+\cls[X]{S}+\cls{K})\\
      \cls[C]{S}      &= \Set[\geqslant2]{\clsAtom+\cls{K}+\cls[X]{S}}\\
      \cls[X]{S}      &= \clsAtom\times\Set[\geqslant1]{\clsAtom+\cls{K}+\cls[X]{S}}\\
      \cls{K}          &= \Set[=2]{\clsAtom+\cls[X]{S}}
    \end{align}
\end{theorem}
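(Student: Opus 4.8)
The plan is to reuse, almost verbatim, the argument that established the block-graph grammar in Theorem~\ref{thm:rooted-grammar-block}. The characterization in Theorem~\ref{thm:2-3-cactus-split-characterization} tells us that a reduced split-decomposition tree of a 2,3-cactus graph differs from that of a block graph in exactly one respect: condition~(b) forces \emph{every} clique-node to have degree exactly~$3$, rather than merely degree at least~$3$. Conditions~(a) and~(c)---that $T$ is a clique-star tree and that the center of every star-node is attached to a leaf---are identical to the block-graph case. Consequently I expect the three rules governing the root, the star-node entered through its center ($\cls[C]{S}$), and the star-node entered through an extremity ($\cls[X]{S}$) to carry over unchanged, so that only the clique rule requires fresh attention.

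First I would dispatch the two star rules and the top-level rule by appealing directly to the block-graph proof. For $\cls[C]{S} = \Set[\geqslant2]{\clsAtom+\cls{K}+\cls[X]{S}}$, a star entered through its center has degree at least~$3$ and hence at least two extremities, forming an unordered set, each leading to a leaf, a clique-node, or a star entered through an extremity; the reduced-tree conditions forbid an extremity from meeting another star's center, so no $\cls[C]{S}$ appears as a child. For $\cls[X]{S} = \clsAtom\times\Set[\geqslant1]{\clsAtom+\cls{K}+\cls[X]{S}}$, the Cartesian product separates the center---which by condition~(c) leads only to the single leaf $\clsAtom$---from the remaining extremities. The root rule $\clsTTCGrl = \clsAtom_{\mLeaf}\times(\cls[C]{S}+\cls[X]{S}+\cls{K})$ then attaches the distinguished leaf to an internal node, which may be a clique-node or a star-node entered either way.

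The crux, and the only genuine departure from the block-graph argument, is the clique rule. I would argue that a clique-node, entered along the single edge to its parent through one of its marker vertices, has by condition~(b) exactly two remaining marker vertices; since two clique-nodes cannot be adjacent in a reduced tree and star-centers attach only to leaves, each of those two remaining edges leads either to a leaf or to a star entered through an extremity, giving precisely $\cls{K} = \Set[=2]{\clsAtom+\cls[X]{S}}$. Here the cardinality constraint ``$=2$'' (in place of the block graph's ``$\geqslant 2$'') is exactly the symbolic encoding of the degree-$3$ restriction. I do not anticipate a substantive obstacle: the entire content of the theorem reduces to the bookkeeping observation that a degree-$3$ clique-node corresponds, via Lemma~\ref{lem:split-induced-clique}, to an induced triangle---the size-$3$ cycle block of the cactus---so that the size-exactly-two set merely replaces the arbitrary clique of a block graph while everything else is left intact.
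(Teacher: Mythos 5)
Your proposal is correct and follows exactly the route the paper intends: the paper gives no explicit proof of Theorem~\ref{thm:rooted-grammar-2-3-cactus}, having announced after the block-graph case that subsequent proofs would be abbreviated because they are ``fairly similar,'' and your argument is precisely the block-graph proof of Theorem~\ref{thm:rooted-grammar-block} with the single substantive change that condition~(b) of Theorem~\ref{thm:2-3-cactus-split-characterization} turns the clique rule's $\Set[\geqslant2]{\cdot}$ into $\Set[=2]{\cdot}$. The only cosmetic caveat is your closing appeal to Lemma~\ref{lem:split-induced-clique}, which is not bijective (as the paper's appendix notes) and is not needed here, since the grammar follows directly from the tree characterization.
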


\begin{theorem}{\label{thm:unrooted-grammar-2-3-cactus}}
  The class $\clsTTCG$ of unrooted 2,3-cactus graphs is specified by
  \begin{align}
  \clsTTCG        &= \cls[K]{T} +\cls[S]{T} + \cls[S-S]{T} - \cls[S\rightarrow S]{T} - \cls[S-K]{T} \\
  \cls[K]{T}       &= \Set[=3]{\clsAtom+\cls[X]{S}}\\
  \cls[S]{T}       &= \clsAtom\times\cls[C]{S} \\
   \cls[S-S]{T}   &= \Set[2]{\cls[X]{S}}\\
  \cls[S\rightarrow S]{T}
                        &= \cls[X]{S}\times\cls[X]{S}\\
  \cls[S-K]{T}    &= \cls{K}\times\cls[X]{S}\\
  \cls[C]{S}       &= \Set[\geqslant2]{\clsAtom+\cls{K}+\cls[X]{S}}\\
  \cls[X]{S}       &= \clsAtom\times\Set[\geqslant1]{\clsAtom+\cls{K}+\cls[X]{S}}\\
  \cls{K}           &= \Set[=2]{\clsAtom+\cls[X]{S}}
\end{align}
\end{theorem}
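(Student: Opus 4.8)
The plan is to mirror the proof of the unrooted block-graph grammar (Theorem~\ref{thm:unrooted-grammar-block}) almost verbatim, since the split-decomposition tree characterization of 2,3-cactus graphs (Theorem~\ref{thm:2-3-cactus-split-characterization}) differs from that of block graphs only in the extra requirement that every clique-node have degree exactly $3$. First I would invoke the dissymmetry theorem of Equation~\eqref{eq:dissymmetry}, instantiated for the tree class $\clsTTCG$, to write $\clsTTCG = {\clsTTCG}_{\mNode} + {\clsTTCG}_{\mEdge} - {\clsTTCG}_{\mDEdge}$. By Lemma~\ref{lem:no-leaves}, the rerooting may ignore leaves, so each rooted term ranges only over internal nodes and internal edges. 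Using condition (a) of Theorem~\ref{thm:2-3-cactus-split-characterization} (the tree is a clique-star tree), the node term splits by node type as ${\clsTTCG}_{\mNode} = \cls[K]{T} + \cls[S]{T}$; the undirected-edge term splits by the endpoint types of an internal edge, and since Cunningham's Theorem~\ref{thm:cunningham} forbids adjacent clique-nodes, only star--star and star--clique edges survive, giving ${\clsTTCG}_{\mEdge} = \cls[S-S]{T} + \cls[S-K]{T}$; and likewise ${\clsTTCG}_{\mDEdge} = \cls[S\rightarrow K]{T} + \cls[K\rightarrow S]{T} + \cls[S\rightarrow S]{T}$.

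Next I would perform the same cancellation as in the block-graph proof: because star-nodes and clique-nodes are of distinguishable types, an undirected star--clique edge carries an implicit orientation, yielding the bijections $\cls[S\rightarrow K]{T} \simeq \cls[K\rightarrow S]{T} \simeq \cls[S-K]{T}$. Substituting these identities collapses the directed star--clique contributions against one copy of $\cls[S-K]{T}$, leaving exactly the stated equation $\clsTTCG = \cls[K]{T} + \cls[S]{T} + \cls[S-S]{T} - \cls[S\rightarrow S]{T} - \cls[S-K]{T}$. I note that, unlike the ptolemaic case, no prohibitive/regular clique distinction ($\cls{\bar{K}}$) is needed here, so there is no bookkeeping of directed ancestral paths; the situation is genuinely the \emph{block-graph} situation.

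The remaining work is to justify each rerooted term, and this is where the degree-exactly-$3$ constraint (condition (b)) does all the distinguishing. When the tree is rooted at a clique-node, that node is no longer entered from a parent, so all three of its marker vertices carry subtrees, each of which---by condition (b) and the no-adjacent-cliques rule---leads to a leaf $\clsAtom$ or a star-node entered by an extremity $\cls[X]{S}$; hence $\cls[K]{T} = \Set[=3]{\clsAtom + \cls[X]{S}}$. By contrast, the entered clique-node $\cls{K}$ has one of its three edges consumed by the parent, leaving a set of exactly two subtrees, so $\cls{K} = \Set[=2]{\clsAtom + \cls[X]{S}}$; this exact-cardinality pair is the sole point of departure from the block-graph grammar, where the corresponding sets were $\Set[\geqslant 3]{\cdots}$ and $\Set[\geqslant 2]{\cdots}$. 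For the star-rooted term I would use condition (c) (star centers attach to leaves): the center contributes a single $\clsAtom$ and the remaining extremities are exactly a $\cls[C]{S}$, giving $\cls[S]{T} = \clsAtom \times \cls[C]{S}$. The edge-rooted terms $\cls[S-S]{T} = \Set[2]{\cls[X]{S}}$, $\cls[S\rightarrow S]{T} = \cls[X]{S} \times \cls[X]{S}$, and $\cls[S-K]{T} = \cls{K} \times \cls[X]{S}$ follow by viewing the chosen (directed or undirected) edge as the meeting point of two rooted subtrees, and the internal rules $\cls[C]{S}$, $\cls[X]{S}$, $\cls{K}$ are imported directly from the rooted grammar of Theorem~\ref{thm:rooted-grammar-2-3-cactus}. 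The main obstacle---though a mild one---is to argue carefully that rerooting turns the ``at least'' cardinalities into ``exactly'' cardinalities precisely for the two clique rules, i.e.\ that the degree-$3$ bound is tight and that exactly one edge is freed up when a clique-node is entered versus rooted; everything else is a transcription of the block-graph argument.
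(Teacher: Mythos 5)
Your proposal is correct and follows exactly the route the paper intends: the paper in fact omits a proof of Theorem~\ref{thm:unrooted-grammar-2-3-cactus} entirely, relying on the reader to transcribe the block-graph argument of Theorem~\ref{thm:unrooted-grammar-block}, which is precisely what you do, with the only substantive change being the $\Set[=3]{\cdot}$ and $\Set[=2]{\cdot}$ cardinalities forced by the degree-$3$ clique condition. Your identification of that exact-cardinality adjustment as the sole point of departure, and your handling of the $\cls[S\rightarrow K]{T} \simeq \cls[K\rightarrow S]{T} \simeq \cls[S-K]{T}$ cancellation, match the paper's own reasoning for the block-graph case.
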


\subsection{3-Cactus Graphs\label{subs:3-cactus}}

We now enumerate the family of cactus graph that is constrained to
$\Omega=\{3\}$, which we refer to as the family as 3-cacti or
$\emph{triangular}$ cacti.

\begin{figure*}
  \centering
  \includegraphics[scale=0.4]{./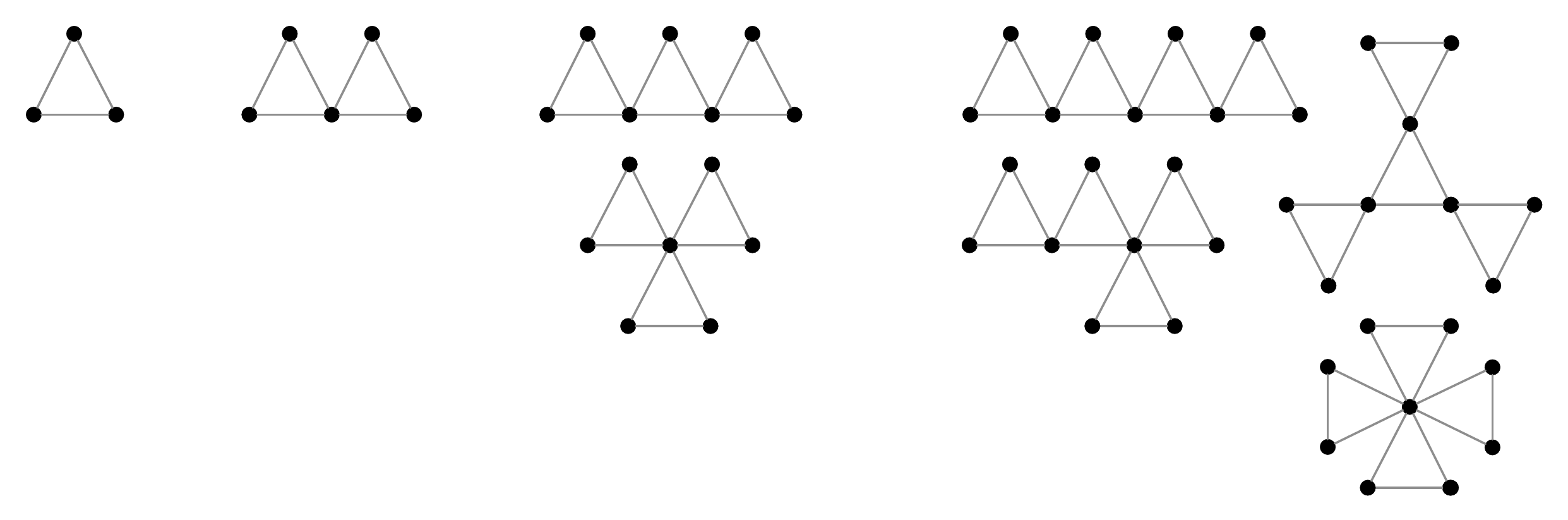}
  \caption{\label{fig:small-3cacti} Small (unrooted, unlabeled) triangular cacti.}
\end{figure*}

\begin{lemma}[Forbidden subgraph characterization of 3-cacti]\label{lem:forbidden-characterization-3cactus}
A graph $G$ is a triangular cactus if and only if $G$ is a block graph with no bridges or induced $\cramped{K_{\geqslant4}}$.
\end{lemma}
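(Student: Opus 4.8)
The plan is to characterize triangular cacti structurally through their \emph{blocks} and then match that structure directly to the three conditions on the right-hand side. The key observation is that in any graph every cycle—being $2$-connected—lies entirely within a single block, and that the blocks partition the edge set. Consequently, a connected graph is a triangular cactus (every edge in at most one cycle, every cycle a $C_3$) if and only if every one of its blocks is a triangle $K_3$. I would establish this equivalence first, since it is the bridge between the ``cycle'' language of the cactus definition and the ``clique/block'' language of block graphs used in Section~\ref{sec:block}.

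For the forward direction, I would assume $G$ is a triangular cactus, so each of its blocks is a $K_3$. Since a triangle is a clique, $G$ is a block graph by definition. Because $2 \notin \Omega$, the definition forces every vertex to lie in a triangle, so no block is a single edge $K_2$; equivalently, $G$ has no bridges, as a bridge is exactly a $K_2$-block. Finally, an induced $K_{\geqslant 4}$ would be a $2$-connected subgraph on at least four vertices and would therefore have to sit inside one block; but every block has only three vertices, so no such induced clique can exist.

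For the reverse direction, I would assume $G$ is a block graph with no bridges and no induced $K_{\geqslant 4}$. Each block is a clique $K_m$. A block with $m = 2$ is a bridge, which is excluded by hypothesis; a block with $m \geqslant 4$ is an induced $K_{\geqslant 4}$, also excluded; and a block with $m = 1$ occurs only for the trivial single-vertex graph. Hence every nontrivial block is a $K_3$, and by the equivalence above $G$ is a triangular cactus.

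I expect the main obstacle to be the careful justification of the block-level equivalence—specifically the two facts that every cycle (and every clique on at least three vertices) is confined to a single block, and that the blocks of a block graph are precisely cliques whose sizes translate directly into the forbidden configurations (bridges $\leftrightarrow K_2$, induced $K_{\geqslant 4} \leftrightarrow$ cliques of size at least $4$). Once this translation is in place, the biconditional reduces to the elementary statement that every block is a triangle if and only if no block is a $K_2$ and no block is a $K_{\geqslant 4}$.
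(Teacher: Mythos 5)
Your proposal is correct, but it takes a genuinely different route from the paper. The paper never passes through the block decomposition: for the forward direction it shows a triangular cactus is $(\cramped{C_{\geqslant4}}, \text{diamond})$-free (hence a block graph by the forbidden-subgraph characterization) and then rules out bridges and large cliques directly from the ``every edge in exactly one triangle'' property; for the reverse direction it argues edge by edge, showing that a non-bridge edge lies on a triangle (using $\cramped{C_{\geqslant4}}$-freeness) and that a second triangle through the same edge would create either a $\cramped{K_4}$ or a diamond, both excluded. Your argument instead reduces everything to the single structural statement that a connected graph is a triangular cactus iff every block is a $\cramped{K_3}$, and then reads off the three conditions as constraints on block sizes (bridges are exactly the $\cramped{K_2}$-blocks, induced $\cramped{K_{\geqslant4}}$'s are exactly the blocks of size at least $4$, since blocks of a block graph are induced cliques and every $2$-connected subgraph sits inside one block). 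This is cleaner and stays closer to the definition of block graphs, but it shifts the real work into the block-level equivalence you flag: in particular, showing that a $2$-connected block on four or more vertices forces either a cycle of length at least $4$ or an edge on two cycles requires the standard fact that in a $2$-connected graph any two edges lie on a common cycle (or an ear-decomposition argument), which you should spell out. The paper's edge-by-edge version avoids invoking any block-decomposition machinery and keeps the argument in the forbidden-induced-subgraph vocabulary that feeds directly into the split-decomposition characterization of Theorem~\ref{thm:split-characterization-3cacti}; yours buys a more transparent picture of why exactly the three conditions appear, at the cost of importing those auxiliary facts.
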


\begin{proof}
  \noindent {[$\Rightarrow$]}~~Given a triangular cactus $G$, we will show
  that $G$ is a block graph and does not have any bridges or induced
  $\cramped{K_{\geqslant4}}$.

  We first note that $G$ is a block graphs by showing that it is
  ($\cramped{C_{\geqslant4}}$, diamond)-free. There cannot be any induced
  $\cramped{C_{\geqslant4}}$ in $G$, because every edge of a 3-cactus is
  in exactly one triangle and no other cycle. There cannot be any induced
  diamonds in a $G$ because diamonds have an edge in common between two
  cycles\footnote{Here is another way to see why 3-cacti are a subset of
    block graphs. Block graphs can be thought of a set of cliques sharing
    at most one vertex pairwise, and cactus graphs can be thought of a set
    of cycles sharing at most one vertex pairwise. Since triangles are
    both cycles and cliques, a pairwise edge-disjoint collection of them
    is both a cactus graph and block graph.}.

  We next observe that $G$ cannot have bridges, as a bridge is by
  definition not part of any cycles, including triangles. Furthermore, $G$
  cannot have any induced cliques on 4 or more vertices, as such a clique
  would involve edges shared between triangles. Therefore, $G$ must be a
  block graph and with no pendant edges or induced
  $\cramped{K_{\geqslant4}}$.

  
  \noindent {[$\Leftarrow$]}~~Given a block graph $G$ without any bridges
  or induced $\cramped{K_{\geqslant4}}$, we need to show that $G$ is a
  3-cactus. We do so by showing that every edge $(a,b)\in E(G)$ is in
  exactly one triangle and no other cycle.
      
  First, since $(a,b)$ cannot be a bridge, it must lie on some cycle $C$.
  Since $G$ is a block graph and thus $\cramped{C_{\geqslant4}}$-free, $C$
  must be a triangle.Furthermore, if $(a,b)$ belonged to another cycle
  $C'$, by the same argument, $C'$ would also be a triangle. Let $c$ be
  the third vertex of $C$ other than $a$ and $b$, and let $c'$ the third
  vertex of $C'$. Depending on the adjacency of $c$ and $c'$, we have one
  of the following two cases:
  \begin{itemize}[noitemsep, nosep]
  \item $(c,c')\in E(G)$, in which case $\set{a,c,b,c'}$ induces a
    $\cramped{K_4}$, which we assumed $G$ does not include;
  \item $(c,c')\not\in E(G)$, in which case $\set{a,c,b,c'}$ induces a
    diamond, which $G$, as a block graph, cannot contain.
  \end{itemize}
  Therefore, no such cycle $C'$ can exist, implying that $(a,b)$ belongs
  to one and exactly one triangle in $G$ and no other cycle. Extending
  this argument to all edges of $G$ ensures that $G$ is a 3-cactus.
\end{proof}

\begin{theorem}[split-decomposition tree characterization of 3-cacti]%
  \label{thm:split-characterization-3cacti}%
  A graph $G$ with the reduced split-decomposition tree $(T,\mathcal{F})$ is a
  triangular cactus graph if and only if
  \begin{enumerate}[label=(\alph*), noitemsep, nosep]
    \item $T$ is a clique-star tree;
    \item the centers of all star-nodes are attached to leaves;
    \item the extremities of star-nodes are only attached to clique-nodes;
    \item every clique-node has degree 3.
  \end{enumerate}
\end{theorem}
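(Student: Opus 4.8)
The plan is to derive the four tree conditions directly from the forbidden-subgraph description of triangular cacti established in Lemma~\ref{lem:forbidden-characterization-3cactus}, namely that $G$ is a triangular cactus if and only if $G$ is a block graph with no bridges and no induced $\cramped{K_{\geqslant4}}$. Each of these three properties has already been translated into a constraint on the reduced clique-star tree $T$ in the preceding results, so the task reduces to combining those constraints and checking that, under the block-graph hypothesis, they collapse to exactly conditions (a)--(d).

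First I would invoke Theorem~\ref{thm:split-characterization-block}: $G$ being a block graph is equivalent to $T$ being a clique-star tree whose star-centers are all attached to leaves---that is, conditions (a) and (b). Working henceforth under this hypothesis, I would feed condition (b) into the two bridge patterns of Lemma~\ref{lem:split-characterization-bridge}. Since every star-center is already incident to a leaf, the pendant pattern (a star-node with both its center and one extremity at leaves) is avoided precisely when no star extremity is attached to a leaf; likewise the internal-bridge pattern (two star-nodes joined extremity-to-extremity with both centers at leaves) is avoided precisely when no two star-nodes are adjacent through their extremities. Adding the reducedness rule from Theorem~\ref{thm:cunningham} that a star extremity can never meet a star center, I conclude that a star extremity can be adjacent neither to a leaf, nor to a star-center, nor to a star-extremity, leaving clique-nodes as its only possible neighbour. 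This is exactly condition (c).

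Next I would treat the no-$\cramped{K_{\geqslant4}}$ requirement via Lemma~\ref{lem:split-characterization-K4}, which forbids (i) clique-nodes of degree $\geqslant 4$ and (ii) alternated paths between distinct clique-nodes. Constraint (i) together with the reduced-tree degree bound (every internal node has degree $\geqslant 3$) is equivalent to every clique-node having degree exactly $3$, i.e.\ condition (d). The key point---and the step I expect to be the crux---is that constraint (ii) is \emph{not} an independent requirement but is already forced by (a)--(c). Indeed, since reduced clique-star trees contain no two adjacent clique-nodes, any path between two distinct clique-nodes must traverse a star-node; an alternated path entering a star through one marker may only exit through a marker adjacent to it in the star graph, and in a star the only adjacencies are center-to-extremity. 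Hence a would-be alternated clique-to-clique path would have to either enter and leave a star through two extremities (which are non-adjacent, breaking alternation) or pass through the center---but under condition (b) the center leads only to a leaf, so the path cannot continue to a second clique-node. Therefore no alternated path between distinct clique-nodes can exist, and (ii) holds automatically.

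Assembling these equivalences yields the theorem in both directions: conditions (a) and (b) capture ``block graph'', condition (c) captures ``no bridges'' (given (b)), and condition (d) captures ``no $\cramped{K_{\geqslant4}}$'' (the second clause of Lemma~\ref{lem:split-characterization-K4} being redundant by the argument above). The one subtlety to verify carefully is that the simplifications of the bridge patterns genuinely rely on condition (b): without star-centers pinned to leaves, forbidding the pendant and internal-bridge patterns would not by itself force extremities onto clique-nodes, so the logical dependence of (c) on (b) must be made explicit in the write-up.
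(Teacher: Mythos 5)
Your proof is correct and follows essentially the same route as the paper's: reduce to Lemma~\ref{lem:forbidden-characterization-3cactus}, import conditions (a)--(b) from Theorem~\ref{thm:split-characterization-block}, derive (c) from Lemma~\ref{lem:split-characterization-bridge} using the fact that (b) pins all star-centers to leaves, and derive (d) from Lemma~\ref{lem:split-characterization-K4}. You are in fact more careful than the paper on one point: you explicitly verify that the second clause of Lemma~\ref{lem:split-characterization-K4} (no alternated paths between distinct clique-nodes) is already forced by (a)--(c), a redundancy the paper's proof passes over in silence.
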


\begin{proof}
  By Lemma~\ref{lem:forbidden-characterization-3cactus}, we know that
  3-cacti can be described exactly as the class of block graphs with no
  bridges or induced $\cramped{K_\geqslant4}$.
  
  The first and second conditions of this theorem duplicate the split-decomposition tree
  characterization of block graphs outlined in
  Theorem~\ref{thm:split-characterization-block}.

  The third condition uses Lemma~\ref{lem:split-characterization-bridge}
  to forbid bridges. Since by the second condition, all star centers in
  $T$ are adjacent to leaves, a star extremity adjacent to a leaf of $T$
  would correspond to a bridge in the form of a pendant edge in $G$, and a
  star extremity adjacent to another star extremity would correspond to a
  non-pendant bridge in $G$.

  Finally, the last condition applies
  Lemma~\ref{lem:split-characterization-K4} to disallow
  $\cramped{K_\geqslant4}$, the last set of forbidden induced subgraphs
  for 3-cactus.
\end{proof}

\noindent The split-decomposition tree characterization of 3-cacti derived in the
previous section naturally defines the following symbolic grammar for
rooted block graphs.

\begin{theorem}{\label{thm:rooted-grammar-3cactus}}
  The class $\clsTCGrl$ of triangular cactus graphs rooted at a vertex is
  specified by
  \begin{align}
      \clsTCGrl        &= \clsAtom_{\mLeaf} \times (\cls[C]{S}+\cls{K})\\
      \cls[C]{S}      &= \Set[\geqslant2]{\cls{K}}\\
      \cls[X]{S}      &= \clsAtom\times\Set[\geqslant1]{\cls{K}}\\
      \cls{K}         &= \Set[=2]{\clsAtom+\cls[X]{S}}
    \end{align}
\end{theorem}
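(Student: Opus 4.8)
The plan is to read off each production of the grammar directly from the four structural conditions of Theorem~\ref{thm:split-characterization-3cacti}, in exactly the way the block-graph grammar was justified in Theorem~\ref{thm:rooted-grammar-block}. Since Theorem~\ref{thm:cunningham} furnishes a bijection between a triangular cactus and its reduced split-decomposition tree, and since rooting at a distinguished leaf is in bijection with the rooted graph, it suffices to show that the four equations generate precisely the reduced clique-star trees obeying conditions (b)--(d), read as trees rooted at a distinguished leaf $\cramped{\clsAtom_{\mLeaf}}$. I would establish each rule by fixing a node type, fixing the edge through which it is entered, and enumerating the admissible subtrees hanging off its remaining incident edges; the reverse direction, that every tree the grammar produces satisfies all four conditions, is then checked rule by rule.

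For the clique rule $\cls{K} = \Set[=2]{\clsAtom+\cls[X]{S}}$: a clique-node entered through one of its edges has, by condition (d), exactly two remaining incident edges, which are unordered, hence a set of size exactly two. Each such edge leads to a leaf $\clsAtom$ or to a star-node; it cannot lead to another clique-node, as reduced trees have no adjacent clique-nodes (Theorem~\ref{thm:cunningham}). A star-node reached from a clique edge must be entered through an \emph{extremity}, since its center is attached to a leaf by condition (b) and condition (c) forces its extremities to attach only to clique-nodes; thus the only compatible entry is $\cls[X]{S}$, and in particular $\cls[C]{S}$ cannot occur here.

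For the star rules, the essential departure from the block-graph grammar is that extremity-subtrees now reduce to cliques alone. In $\cls[C]{S} = \Set[\geqslant2]{\cls{K}}$, a star-node entered through its center has at least two extremities (degree $\geqslant 3$ by Theorem~\ref{thm:cunningham}), and condition (c) forces each extremity to a clique-node, giving an unordered set of at least two copies of $\cls{K}$ with no $\clsAtom$ or $\cls[X]{S}$ children. In $\cls[X]{S} = \clsAtom\times\Set[\geqslant1]{\cls{K}}$, the node is exited through its center---attached to a leaf by condition (b), yielding the factor $\clsAtom$---and through its remaining extremities, of which there is at least one (a degree-$\geqslant 3$ star-node, with one extremity consumed to enter and the center counted separately), each again forced to a clique-node by condition (c). The Cartesian product isolates the distinguished center-subtree from the unordered extremity-subtrees.

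Finally, the axiom $\clsTCGrl = \clsAtom_{\mLeaf}\times(\cls[C]{S}+\cls{K})$ records that a rooted triangular cactus is a distinguished leaf attached to one internal node: if that node is a clique-node we enter it through an edge ($\cls{K}$), and if it is a star-node the root leaf must sit at the center, since condition (c) forbids a leaf at an extremity, so the node is entered through its center ($\cls[C]{S}$). This is why, unlike the block-graph and ptolemaic axioms, the summand $\cls[X]{S}$ is absent. I expect the main obstacle to be precisely this bookkeeping of admissible child types: every simplification relative to block graphs---sets of exact size two, extremity-sets containing only $\cls{K}$, and the missing $\cls[X]{S}$ term in the axiom---rests on combining conditions (b), (c), (d) with the reduced-tree constraints of Theorem~\ref{thm:cunningham}, and I must verify in the reverse direction that the grammar is neither too permissive nor too restrictive, so that it enumerates exactly the trees characterized in Theorem~\ref{thm:split-characterization-3cacti}.
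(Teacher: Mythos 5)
Your proposal is correct and follows exactly the approach the paper intends: the paper omits an explicit proof of this theorem, stating after Theorem~\ref{thm:rooted-grammar-block} that subsequent grammar proofs are ``fairly similar'' and will be abbreviated, and your rule-by-rule derivation from conditions (b)--(d) of Theorem~\ref{thm:split-characterization-3cacti} combined with the reduced-tree constraints of Theorem~\ref{thm:cunningham} is precisely that argument carried out. Your observations that condition (d) forces $\Set[=2]{\cdot}$ in the clique rule, that condition (c) strips $\clsAtom$ and $\cls[X]{S}$ from the extremity-sets, and that condition (c) also explains the absence of $\cls[X]{S}$ in the axiom are all the right justifications and match the paper's reasoning.
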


\begin{theorem}{\label{thm:unrooted-grammar-3cactus}}
  The class $\clsTCG$ of unrooted triangular cactus graphs is specified by
  \begin{align}
  \clsTCG          &= \cls[K]{T} +\cls[S]{T} - \cls[S-K]{T} \\
  \cls[K]{T}        &= \Set[=3]{\clsAtom+\cls[X]{S}}\\
  \cls[S]{T}        &= \clsAtom\times\cls[C]{S} \\
  \cls[S-K]{T}    &= \cls{K}\times\cls[X]{S}\\
  \cls[C]{S}       &= \Set[\geqslant2]{\cls{K}}\\
  \cls[X]{S}       &= \clsAtom\times\Set[\geqslant1]{\cls{K}}\\
  \cls{K}           &= \Set[\geqslant2]{\clsAtom+\cls[X]{S}}
\end{align}
\end{theorem}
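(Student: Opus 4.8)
The plan is to derive this unrooted grammar by applying the dissymmetry theorem (Equation~\eqref{eq:dissymmetry}) to the reduced split-decomposition trees of triangular cacti, exactly as was done for block graphs in Theorem~\ref{thm:unrooted-grammar-block} and for ptolemaic graphs in Theorem~\ref{thm:unrooted-grammar-ptol}. Starting from
\begin{align*}
  \clsTCG = {\clsTCG}_{\mNode} + {\clsTCG}_{\mEdge} - {\clsTCG}_{\mDEdge},
\end{align*}
I would first invoke Lemma~\ref{lem:no-leaves} so that only internal nodes (and internal edges) need be considered in the three rerooted terms. The characterization of Theorem~\ref{thm:split-characterization-3cacti} then tells us precisely which internal nodes and adjacencies are admissible, and the proof reduces to reading each rerooted term off of those constraints.

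The key structural observation---and the step I expect to carry the real content---is to determine which edges between internal nodes can occur. By Theorem~\ref{thm:split-characterization-3cacti}, star-node centers attach only to leaves, so a star-node can be joined to another internal node only through one of its extremities; moreover, those extremities attach only to clique-nodes. Since reduced trees have no two adjacent clique-nodes (Theorem~\ref{thm:cunningham}), it follows that the \emph{only} edge between two internal nodes is one joining an extremity of a star-node to a marker vertex of a clique-node. This immediately gives
\begin{align*}
  {\clsTCG}_{\mNode}  &= \cls[K]{T} + \cls[S]{T}, \\
  {\clsTCG}_{\mEdge}  &= \cls[S-K]{T}, \\
  {\clsTCG}_{\mDEdge} &= \cls[S\rightarrow K]{T} + \cls[K\rightarrow S]{T},
\end{align*}
where rooting at an internal node splits into the clique-node and star-node cases, and both edge terms involve only star--clique edges.

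Substituting into the dissymmetry equation yields $\clsTCG = \cls[K]{T} + \cls[S]{T} + \cls[S-K]{T} - \cls[S\rightarrow K]{T} - \cls[K\rightarrow S]{T}$. As in the block-graph proof, since star-nodes and clique-nodes are distinguishable, an undirected star--clique edge already carries an implicit orientation (say, from clique to star), so $\cls[S\rightarrow K]{T} \simeq \cls[K\rightarrow S]{T} \simeq \cls[S-K]{T}$; the two directed terms then cancel two copies of $\cls[S-K]{T}$, leaving exactly the stated $\clsTCG = \cls[K]{T} + \cls[S]{T} - \cls[S-K]{T}$. It then remains to justify the three rerooted specifications: $\cls[K]{T} = \Set[=3]{\clsAtom + \cls[X]{S}}$ records that a clique-node has degree exactly three (condition (d)), with each incident marker vertex leading to a leaf or to a star-node entered through an extremity; $\cls[S]{T} = \clsAtom \times \cls[C]{S}$ separates the center (a single leaf, by condition (b)) from the bouquet of extremities captured by $\cls[C]{S}$; and $\cls[S-K]{T} = \cls{K} \times \cls[X]{S}$ is simply the product of the clique side ($\cls{K}$) and the star side ($\cls[X]{S}$) of the rooting edge, the product being unambiguous precisely because the two endpoints are of different types.

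Finally, the auxiliary classes $\cls[C]{S}$, $\cls[X]{S}$ and $\cls{K}$ are inherited verbatim from the rooted grammar of Theorem~\ref{thm:rooted-grammar-3cactus}, since rerooting does not alter the description of a subtree hanging below the new root. The one place to be careful is the bookkeeping of degrees: each clique-node must contribute degree exactly three (hence the $\Set[=3]{\cdots}$ in $\cls[K]{T}$, and exactly two remaining edges once a clique is entered through one marker vertex in the subtree class $\cls{K}$), and one must check that the lower bound of two extremities in $\cls[C]{S}$ and one remaining extremity in $\cls[X]{S}$ are consistent with the degree-at-least-three requirement of Theorem~\ref{thm:cunningham}. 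No genuinely new difficulty arises beyond the identification of the admissible internal edges; once that is settled, the remainder is a mechanical transcription of the constraints, entirely parallel to the block-graph and ptolemaic cases.
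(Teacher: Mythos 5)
Your proposal is correct and follows exactly the dissymmetry-theorem template the paper uses for block graphs (Theorem~\ref{thm:unrooted-grammar-block}) and ptolemaic graphs; the paper itself omits the proof of this theorem, implicitly deferring to that same argument, and your identification of the star-extremity--clique edge as the only admissible internal edge is precisely the step that specializes it to 3-cacti. One remark: you inherit $\cls{K}=\Set[=2]{\clsAtom+\cls[X]{S}}$ verbatim from the rooted grammar of Theorem~\ref{thm:rooted-grammar-3cactus}, which is the correct form given that every clique-node has degree exactly three; the $\Set[\geqslant2]{\clsAtom+\cls[X]{S}}$ appearing in the printed statement of the theorem is evidently a typo.
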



\section{Conclusion\label{sec:conclusion}}

In this paper, we follow the ideas of Gioan and Paul~\cite{GiPa12} and
Chauve~\etal~\cite{ChFuLu14}, and provide full analyses of several
important subclasses of distance-hereditary graph. Some of these analyses
have lead us to uncover previously unknown enumerations (ptolemaic graphs,
...), while for other classes for which enumerations were already known
(block graphs, 2,3-cactus and 3-cactus graphs), we have provided symbolic
grammars which are a more powerful starting point for future work: such as
parameter analyses, exhaustive and random generation and the empirical
analyses that the latter enables, etc.. For instance, Iriza~\cite[\S
7]{Iriza15} provided a nice tentative preview of the type of results
unlocked by these grammars, when he empirically observed the linear growth
of clique-nodes and star-nodes in the split-decomposition tree of a random
distance-hereditary graph.

Our main idea is encapsulated in Section~\ref{sec:forbidden}: we think
that the split-decomposition, coupled with analytic combinatorics, is a
powerful way to analyze classes of graphs specified by their forbidden
induced subgraph. This is remarkably noteworthy, because forbidden
characterizations are relatively common, and yet they generally are very
difficult to translate to specifications. What we show is that this can be
(at least for subclasses of distance-hereditary graphs which are totally
decomposable by the split-decomposition) fairly automatic, in keeping with
the spirit of analytic combinatorics:
\begin{enumerate}[label=(\roman*)]
\item identify forbidden induced subgraphs;
\item translate each forbidden subgraph into constraints on the
  (clique-star) split-decomposition tree;
\item describe rooted grammar, apply unrooting, etc..
\end{enumerate}
This allows us to systematically derive the grammar of a number of
well-studied classes of graphs, and to compute full enumerations,
asymptotic estimates, etc.. In Figure~\ref{fig:ratios-subsets}, for
instance, we have used the results from this paper to provide some
intuition as to the relative ``density'' of these graph classes. A fairly
attainable goal would be to use the asymptotics estimates which can be
derived automatically from the grammars, to compute the asymptotic
probability that a random block graph is also a 2,3-cactus graph.\bigskip

\noindent Naturally, this raises a number of interesting questions, but
possibly the most natural one to ask is: can we expand this methodology
beyond distance-hereditary graphs, to classes for which the
split-decomposition tree contains \emph{prime nodes} (which are neither
clique-nodes nor star-nodes).

Beyond distance-hereditary graphs, another perfect (pun intended)
candidate is the class of \emph{parity graphs}: these are the graphs whose
split-decomposition tree has prime nodes that are bipartite graphs. But
while bipartite graphs have been enumerated by Hanlon~\cite{Hanlon79}, and
more recently Gainer-Dewar and Gessel~\cite{GaGe14}, it is unclear whether
this is sufficient to derive a grammar for parity graphs. Indeed, the
advantage of the degenerate nodes (clique-nodes and star-nodes) is that
their symmetries are fairly uncomplicated (all the vertices of a clique
are undistinguished; all the vertices of a star, save the center, are
undistinguished), as is in fact their enumeration (for each given size,
there is only one clique or one star). An empirical study by
Shi~\cite{ShLu15} showed that lower and upper bounds can be derived by
plugging in the enumeration as an artificial generating function---either
assuming all vertices of a bipartite prime node to be distinguished or
undistinguished.

Other classes present a similar challenge, in that the subset of allowable
prime nodes is itself too challenging.

A likely more fruitful direction to pursue this work is to first start
with classes of graphs which have small, predictable subsets of prime
nodes. We discovered one such family of classes in a paper by Harary and
Uhlenbeck~\cite{HaUh53}; in this paper, they discuss the enumeration of
unlabeled and unrooted 3-cactus graphs and 4-cactus graphs (which we
studied and enumerated using a radically different methodology in this
paper), while suggesting that they would have liked to provide some
general methodology to obtain the enumeration of $m$-cactus graphs, for
generalized polygons on $m$ sides\footnote{It seemsthat Harary and
  Uhlenbeck have never published such a paper; and it appears that the
  closest there is in terms of a general enumeration of $m$-cactus graphs
  is by Bona~\etal~\cite{BoBoLaLe99}---yet they enumerate graphs which are
  embedded in the plane, while we seek to enumerate the non-plane,
  unlabeled and unrooted $m$-cactus graphs.}.

There is some evidence to suggest that $m$-cactus graphs would yield
split-decomposition trees with prime nodes that are undirected cycles of
size $m$; likewise the split-decomposition tree of a general cactus graph
(in which the blocks are cycles of any size larger than 3) would likely
have prime nodes that are undirected cycles. In the same vein,
block-cactus graphs (which are the union of the block graphs enumerated in
Section~\ref{sec:block}, and of generalized cactus graphs) would likely
also have the same type of prime nodes. All of these are more manageable
subset, and it is likely that the various intersection classes with cactus
graphs would be a more promising avenue by which to determine whether the
split-decomposition can be reliably used for the enumeration of supersets
of distance-hereditary graphs.


\begin{table*}[p]
  \centering
  \def\arraystretch{1.5}
  \begin{tabular}{lr}
    \toprule
    \textbf{Graph Class}& \textbf{Number}\\
    \midrule
    General~\cite[A000088]{Sloane}& $2.86 \times \cramped{10^{685}}$\\
    General Connected~\cite[A001349]{Sloane}&$2.86 \times \cramped{10^{685}}$\\
    Distance-Hereditary~\cite{ChFuLu14}& $3.38 \times \cramped{10^{56}}$\\
    3-Leaf Power~\cite{ChFuLu14}& $8.40 \times \cramped{10^{37}}$\\
    Ptolemaic                  & $3.78 \times \cramped{10^{50}}$\\
    Block                         & $1.44 \times \cramped{10^{40}}$\\
    2,3-Cacti                  & $1.55 \times \cramped{10^{38}}$\\
    3-Cacti                    & $9.13 \times \cramped{10^{16}}$\\
    4-Cacti                    & $5.73 \times \cramped{10^{14}}$\\
    \bottomrule
  \end{tabular}
  \caption{%
    \label{tab:sample-enum}%
    Number of unlabeled graphs of size $n = 73$ for different
    graph classes.}
\end{table*}

\begin{figure*}[p]
  \centering
  \begin{bigcenter}
    \begin{minipage}[t]{.32\linewidth}
      \centering
      \includegraphics[scale=0.5]{./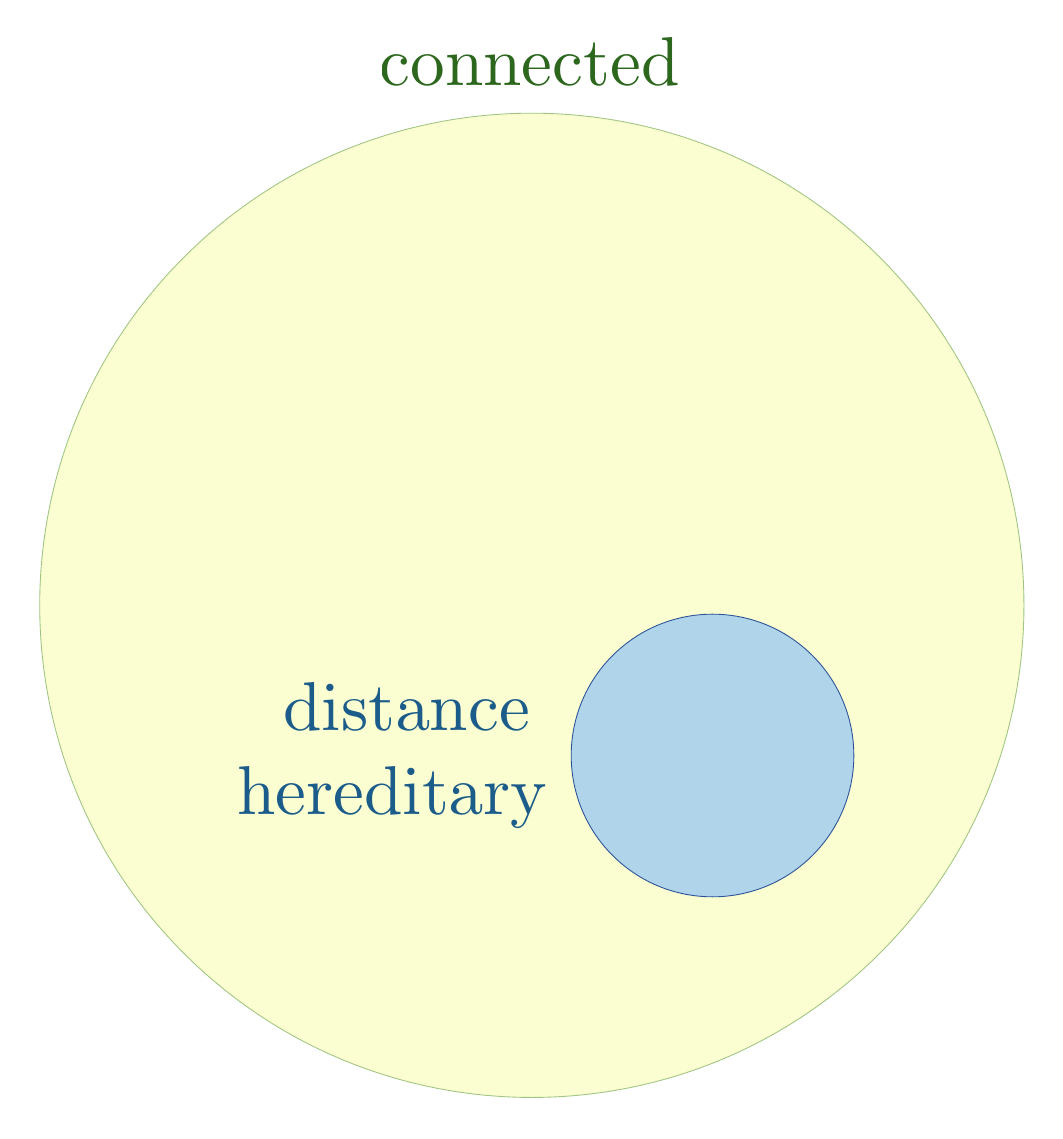}%
      \subcaption{%
        \label{fig:ratios-subsets-a}%
        Ratio of the size of distance-hereditary graphs to general
        connected graphs on a logarithmic scale.}
    \end{minipage}\hspace{2em}
    \begin{minipage}[t]{.60\linewidth}
      \centering
      \includegraphics[scale=0.5]{./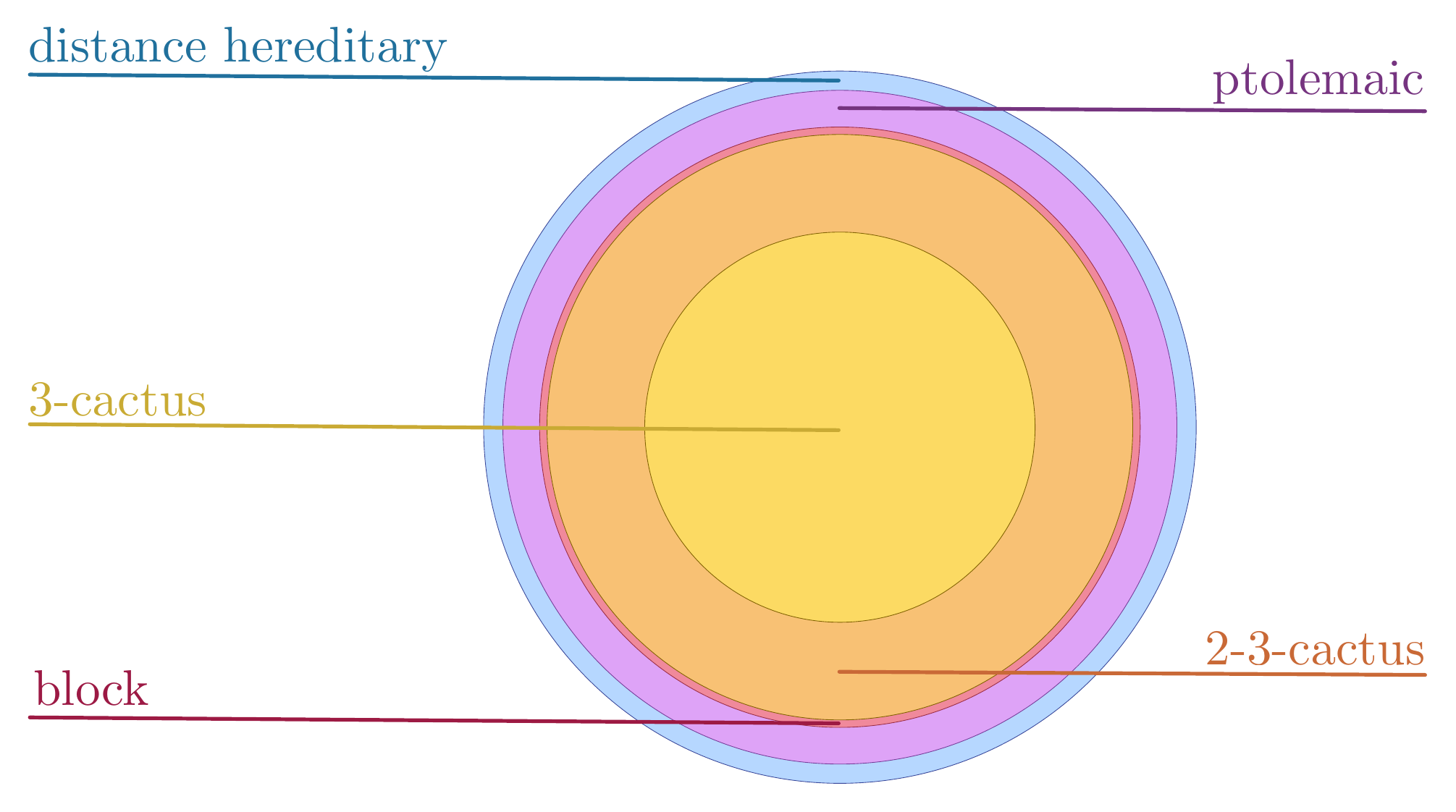}%
      \subcaption{%
        \label{fig:ratios-subsets-b}%
        Ratios of the size of various subsets of distance-hereditary
        graphs on a logarithmic scale.}
    \end{minipage}
  \end{bigcenter}
  \caption{%
    \label{fig:ratios-subsets}%
    Illustration of the ratios of sizes of various graph classes, for
    $n=73$ as depicted in Table~\ref{tab:sample-enum}. The radii are the
    square roots of the ratios of the logarithms of the enumeration for a
    given class to the logarithm of the enumeration for the base class. In
    addition, only strict subsets are displayed on the right. For
    instance, 3-leaf power graphs are a subset of ptolemaic graphs, since
    their characterization~\cite[\S 3.3]{GiPa12} does not allow for
    center-center paths; but they are not a subset of block graphs.
    Similarly, we cannot represent 4-cacti as they have no intersection
    with 3-cacti, and it is not physically possible to represent them here
    given our logarithmic scale.}
\end{figure*}


\begin{table*}
  \centering
  \def\arraystretch{1.2}
  \begin{tabularx}{0.84\textwidth}{cW}
    \toprule
    \textbf{Symbol}&\textbf{Explanation}\\
    \midrule
    $\cls{K}$ &{a clique-node entered from one of its vertices (and missing
                the corresponding subtree)}\\
    $\cls[C]{S}$ &{a star-node entered through its \emph{center} (and
                   missing the corresponding subtree)}\\
    $\cls[X]{S}$ &{a star-node entered through one of its (at least two)
                   \emph{extremities} (and missing the corresponding
                   subtree)}\\[0.4em]
    $\clsAtom$ &{a leaf of the split-decomposition tree
                 (an atom with unit size)}\\
    $\clsAtom_\mLeaf$ &{the \emph{rooted} leaf of the
                       {split-decomposition tree}
                       (an atom with unit size)}\\[0.4em]
    \midrule
    $\cls[K]{T}$ &{a {split-decomposition tree} rerooted at a
                   clique-node (all subtrees are present)}\\
    $\cls[S]{T}$ &{a {split-decomposition tree} rerooted at a
                   star-node (all subtrees are present)}\\[0.4em]
    $\cls[K-S]{T}$ &{a {split-decomposition tree} rerooted at an
                     \emph{edge} connecting a clique-node to a star-node
                     (the edge can either connect the clique-node to the
                     star-node's center or an extremity; the edge accounts
                     for one subtree of the clique-node and one subtree
                     of the star-node)}\\
    $\cls[S-S]{T}$ &{a {split-decomposition tree} rerooted at an
                     \emph{edge} connecting two star-nodes; in the general
                     case this can either be a center-center edge, or
                     an extremity-extremity edge; some classes, such as
                     ptolemaic graphs, may restrict this (and as before
                     the edge accounts for a subtree of each of the
                     nodes)}\\[0.4em]
    $\cls[S\rightarrow S]{T}$ &{a {split-decomposition tree} rerooted at a
                                \emph{directed edge}; similar to
                                $\cramped{\cls[S-S]{T}}$, except there is
                                a direction to the edge---and thus an order
                                to the star-nodes}\\[0.4em]
    \midrule
    $\cls{\bar{K}}$ &{a \emph{prohibitive} clique-node---used in the
                      grammar for ptolemaic graphs---entered through an
                      edge (and missing the corresponding subtree) that is
                      on a path that is connected to the center of a star;
                      this clique-node disallows outgoing connections to a
                      star-node's center, to avoid the formation of a
                      \emph{center-center path}, as stated by
                      Lemma~\ref{lem:split-characterization-c4}}\\[0.4em]
    $\cls[C]{Q}$ &{a \emph{``quadrilateral'' star-node}, as introduced in
                   the grammars for 4-cactus graphs of
                   Appendix~\ref{app:4-cactus}; this is one half of a group
                   of two star-nodes, each with two extremities, and linked
                   at their center as illustrated in
                   Figure~\ref{fig:split-subpattern-C4}; here we are
                   entering one such star-node from the center (or
                   equivalently the center of star-node is the subtree
                   that is missing), which means the parent
                   node/missing subtree is the other part of the two
                   star-node group}\\
    $\cls[X]{Q}$ &{a ``quadrilateral'' star-node, entered from an
                   extremity (or with a subtree rooted at an extremity
                   missing), which means that we must now connect the
                   center to a matching ``quadralateral'' star-node,
                   and the remaining extremity to something else}\\[0.4em]
    \bottomrule
  \end{tabularx}
  \caption{\label{tab:symbols}%
    The main symbols used to define the {split-decomposition tree} of the
    classes of graphs analyzed in this paper. Refer to
    Subsection~\ref{subs:split-grammars} for details on the terminology;
    and refer to Subsection~\ref{subs:dissymmetry} for details on the
    dissymmetry theorem, from which all the rerooted trees, denoted
    $\cramped{\cls[\omega]{T}}$, come from. (Note that for succinctness we
    have omitted the rerooted trees arising from applying the dissymmetry
    theorem to 4-cactus graphs in Appendix~\ref{app:4-cactus}.)}
\end{table*}


\section*{Acknowledgment}

We would like to thank Christophe Paul, for his help in understanding the
split-decomposition tree characterization of ptolemaic graphs.

All of the figures in this article were created in OmniGraffle~6~Pro. Some
were borrowed from Iriza~\cite{Iriza15}, and others were redone from
original figures by Gioan and Paul~\cite{GiPa12} (notably
Figures~\ref{fig:ex-split} and \ref{fig:reduced}).


\bibliographystyle{plain}
\bibliography{article2}{}

\begin{thebibliography}{10}

\bibitem{BeLaLe98}
Fran{\c c}ois Bergeron, Gilbert Labelle, and Pierre Leroux.
\newblock {\em {Combinatorial Species and Tree-like Structures}}, volume~67 of
  {\em Encyclopedia of Mathematics and Its Applications}.
\newblock Cambridge University Press, Cambridge, 1998.

\bibitem{BoFuKaVi11}
Manuel Bodirsky, {\'E}ric Fusy, Mihyun Kang, and Stefan Vigerske.
\newblock {Boltzmann samplers, P{\'o}lya theory, and cycle pointing}.
\newblock {\em SIAM Journal on Computing}, 40(3):721--769, 2011.

\bibitem{BoBoLaLe99}
Miklos Bona, Michel Bousquet, Gilbert Labelle, and Pierre Leroux.
\newblock {Enumeration of m-Ary Cacti}.
\newblock {\em Advances in Applied Mathematics}, 24(1):22--56, January 2000.

\bibitem{BoWe14}
Mireille Bousquet-M{\'e}lou and Kerstin Weller.
\newblock {Asymptotic Properties of Some Minor-Closed Classes of Graphs}.
\newblock {\em Combinatorics, Probability and Computing}, 23(05):749--795,
  September 2014.

\bibitem{CaWo03}
Robert Castelo and Nick Wormald.
\newblock {Enumeration of P$_{4}$-free chordal graphs}.
\newblock {\em Graphs and Combinatorics}, 19:467--474, 2003.

\bibitem{ChFuKaSh08}
Guillaume Chapuy, {\'E}ric Fusy, Mihyun Kang, and Bilyana Shoilekova.
\newblock {A complete grammar for decomposing a family of graphs into
  3-connected components}.
\newblock {\em The Electronic Journal of Combinatorics}, 15, 2008.

\bibitem{ChFuLu14}
C{\'e}dric Chauve, {\'E}ric Fusy, and J{\'e}r{\'e}mie Lumbroso.
\newblock {An Enumeration of Distance-Hereditary and 3-Leaf Power Graphs}.
\newblock Technical report, 2014.

\bibitem{Cunningham82}
William~H. Cunningham.
\newblock {Decomposition of Directed Graphs}.
\newblock {\em SIAM Journal on Algebraic Discrete Methods}, 3(2):214--228,
  1982.

\bibitem{HoECDrmota15}
Michael Drmota.
\newblock {Trees}.
\newblock In Miklos Bona, editor, {\em Handbook of Enumerative Combinatorics},
  pages 281--334. CRC Press, 2015.

\bibitem{DuFlLoSc04}
Philippe Duchon, Philippe Flajolet, Guy Louchard, and Gilles Schaeffer.
\newblock {Boltzmann Samplers for the Random Generation of Combinatorial
  Structures}.
\newblock {\em Combinatorics, Probability and Computing}, 13(4--5):577--625,
  2004.

\bibitem{EsHoSpSr11}
Elaine~M. Eschen, Ch{\'\i}nh~T. Ho{\`a}ng, Jeremy~P. Spinrad, and R.~Sritharan.
\newblock {On graphs without a C$_{4}$ or a diamond}.
\newblock {\em Discrete Applied Mathematics}, 159(7):581--587, April 2011.

\bibitem{FlFuPi07}
Philippe Flajolet, {\'E}ric Fusy, and Carine Pivoteau.
\newblock {Boltzmann Sampling of Unlabelled Structures}.
\newblock In David~Applegate et~al, editor, {\em Proceedings of the Ninth
  Workshop on Algorithm Engineering and Experiments and the Fourth Workshop on
  Analytic Algorithmics and Combinatorics}, pages 201--211. SIAM Press, 2007.

\bibitem{FlSe09}
Philippe Flajolet and Robert Sedgewick.
\newblock {\em {Analytic Combinatorics}}.
\newblock Cambridge University Press, 2009.

\bibitem{FlZiVa94}
Philippe Flajolet, Paul Zimmermann, and Bernard van Cutsem.
\newblock {A Calculus for the Random Generation of Labelled Combinatorial
  Structures}.
\newblock {\em Theoretical Computer Science}, 132(1-2):1--35, 1994.

\bibitem{FrJo83}
Greg~N. Frederickson and Donald~B. Johnson.
\newblock {Finding \emph{k}th Paths and \emph{p}-Centers by Generating and
  Searching Good Data Structures}.
\newblock {\em Journal of Algorithms}, 4(1):61--80, March 1983.

\bibitem{GaGe14}
Andrew Gainer-Dewar and Ira~M. Gessel.
\newblock {Enumeration of bipartite graphs and bipartite blocks}.
\newblock {\em The Electronic Journal of Combinatorics}, 21(2):1--20, May 2014.

\bibitem{GiPa12}
Emeric Gioan and Christophe Paul.
\newblock {Split decomposition and graph-labelled trees: Characterizations and
  fully dynamic algorithms for totally decomposable graphs}.
\newblock {\em Discrete Applied Mathematics}, 160(6):708--733, 2012.

\bibitem{Hanlon79}
Phil Hanlon.
\newblock {The enumeration of bipartite graphs}.
\newblock {\em Discrete Mathematics}, 28(1):49--57, 1979.

\bibitem{Harary63}
Frank Harary.
\newblock {A characterization of block-graphs}.
\newblock {\em Canadian Mathematical Bulletin}, 6(0):1--6, January 1963.

\bibitem{HaPa73}
Frank Harary and Edgar~M. Palmer.
\newblock {\em {Graphical Enumeration}}.
\newblock Academic Press.

\bibitem{HaUh53}
Frank Harary and George~E. Uhlenbeck.
\newblock {On the Number of Husimi Trees}.
\newblock {\em Proceedings of the National Academy of Sciences},
  39(4):315--322, April 1953.

\bibitem{Howorka81}
Edward Howorka.
\newblock {A Characterization of Ptolemaic Graphs}.
\newblock {\em Journal of Graph Theory}, 5(3):323--331, September 1981.

\bibitem{Iriza15}
Alexander Iriza.
\newblock {Enumeration and random generation of unlabeled classes of graphs: A
  practical study of cycle pointing and the dissymmetry theorem}.
\newblock Master's thesis, Princeton University, arXiv preprint
  arXiv:1511.06037, 2015.

\bibitem{KaCh65}
David~C. Kay and Gary Chartrand.
\newblock {A characterization of certain ptolemaic graphs}.
\newblock {\em Canadian Journal of Mathematics}, 17(0):342--346, January 1965.

\bibitem{NaUeUn09}
Shin-ichi Nakano, Ryuhei Uehara, and Takeaki Uno.
\newblock {A New Approach to Graph Recognition and Applications to
  Distance-Hereditary Graphs}.
\newblock {\em Journal of Computer Science and Technology}, 24(3):517--533,
  2009.

\bibitem{Otter48}
Richard Otter.
\newblock {The number of trees}.
\newblock {\em Annals of Mathematics}, 49(3):583--599, 1948.

\bibitem{Paul14}
Christophe Paul.
\newblock {Split Decomposition via Graph-Labelled Trees}.
\newblock In Ming-Yang Kao, editor, {\em Encyclopedia of Algorithms}, pages
  1--7. Springer US, Boston, MA, 2014.

\bibitem{Polya37}
George P{\'o}lya.
\newblock {Kombinatorische Anzahlbestimmungen f{\"u}r Gruppen, Graphen und
  chemische Verbindungen}.
\newblock {\em Acta Mathematica}, 68(1):145--254, 1937.

\bibitem{PoRe87}
George P{\'o}lya and Ronald~C. Read.
\newblock {\em {Combinatorial Enumeration of Groups, Graphs, and Chemical
  Compounds}}.
\newblock Springer-Verlag New York, Inc., New York, NY, USA, 1987.

\bibitem{RaVo98}
Dieter Rautenbach and Lutz Volkmann.
\newblock {The domatic number of block-cactus graphs}.
\newblock {\em Discrete Mathematics}, 187(1-3):185--193, June 1998.

\bibitem{RaTh01}
Vlady Ravelomanana and Lo{\"y}s Thimonier.
\newblock {Asymptotic enumeration of cographs}.
\newblock {\em Electronic Notes in Discrete Mathematics}, 7(0):58--61, 2001.

\bibitem{RaTh04}
Vlady Ravelomanana and Lo{\"y}s Thimonier.
\newblock {Forbidden subgraphs in connected graphs}.
\newblock {\em Theoretical Computer Science}, 314(1-2):121--171, February 2004.

\bibitem{ShLu15}
Jessica Shi and J{\'e}r{\'e}mie Lumbroso.
\newblock {Enumeration of unlabeled graph classes: A study of tree
  decompositions and related approaches}.

\bibitem{Sloane}
Neil J.~A. Sloane.
\newblock {\em {The On-Line Encyclopedia of Integer Sequences}}.

\bibitem{UeUn09}
Ryuhei Uehara and Yushi Uno.
\newblock {Laminar structure of ptolemaic graphs with applications}.
\newblock {\em Discrete Applied Mathematics}, 157(7):1533--1543, April 2009.

\end{thebibliography}

\appendix


\section{4-Cactus Graphs\label{app:4-cactus}}

While investigating block graphs and the related class of 3-cactus graphs,
we found an article by Harary and Uhlenbeck~\cite{HaUh53} in which they
investigate both 3-cactus graphs and 4-cactus graphs. This prompted us to
enumerate 4-cactus graphs.

This enumeration appears in appendix because it does not involve forbidden
induced subgraphs, and so is somewhat of a non-sequitur as far as the
point we would like to make in this paper.

The appeal of this enumeration is that it revisits a trick that is similar
to that used to enumerate ptolemaic graphs. In
Section~\ref{sec:ptolemaic}, we introduced two symbols to express
clique-nodes, $\cramped{\cls{K}}$ and $\cramped{\cls{\bar{K}}}$. These two
symbols were used to keep track of whether, in the rooted decomposition of
the split-decomposition tree, we were traveling down an alternated path
starting at the center of a star-node or not. This ``state'' information
was essential to prevent the formation of center-center paths that induce
$\cramped{C_4}$.

In this grammar for triangular cacti, we use a similar idea. The
quadrilaterals of these graphs translate to a very specific pattern in the
split-decomposition tree: two star-nodes of size 3, connected at their
center. We could well translate this in the grammar as a ``meta'' internal
node that is just those star-nodes combined. Instead, we define the two
symbols $\cramped{\cls[C]{Q}}$ and $\cramped{\cls[X]{Q}}$ to denote these
special star-nodes; and because they always come in pairs connected at
their centers, we know that if we encounter $\cramped{\cls[C]{Q}}$ we are
``inside'' the pattern, and if we encounter $\cramped{\cls[X]{Q}}$ we are
entering this pattern from the outside.

\begin{theorem}
  The class $\clsFCGrl$ of 4-cactus graphs rooted at a vertex is specified by
  \begin{align}
      \clsFCGrl      &= \clsAtom_{\mLeaf} \times (\cls[X]{Q}+\cls[C]{S})\\
      \cls[C]{Q}      &= \Set[=2]{\clsAtom+\cls[X]{S}}\\
      \cls[X]{Q}      &= \cls[C]{Q}\times(\clsAtom+\cls[X]{S})\\
      \cls[C]{S}      &= \Set[\geqslant2]{\cls[X]{Q}}\\
      \cls[X]{S}      &= \clsAtom\times\Set[\geqslant1]{\cls[X]{Q}}
    \end{align}
\end{theorem}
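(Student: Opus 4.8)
The plan is to follow the template of the earlier grammar proofs (Theorems~\ref{thm:rooted-grammar-block} and~\ref{thm:rooted-grammar}): first determine the precise shape of the reduced split-decomposition tree of a 4-cactus, and then read off and justify each production. Because 4-cacti are not introduced here through a forbidden-induced-subgraph characterization, almost all the work lies in the first step. I would begin with two structural observations about a 4-cactus $G$: it is triangle-free (every block is a chordless $4$-cycle), so by Lemma~\ref{lem:split-induced-clique} the tree $T$ contains \emph{no} clique-nodes and is a pure star tree; and every vertex has even degree, being $2$ for a vertex on a single cycle and $2k\geqslant 4$ for a cut-vertex shared by $k$ cycles (there are no bridges).

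Next I would show that $T$ has exactly two species of star-node. Each chordless $4$-cycle block is an induced $\cramped{C_4}$, so by Lemma~\ref{lem:split-characterization-c4} it yields a center-center path joining two star-nodes $u,v$ whose extremities run to the two diagonal pairs of the cycle; I call these \emph{quadrilateral} stars. The crux of the whole proof is the rigidity claim that each of $u,v$ has degree exactly $3$ and that the path between them is a single edge: if $u$ carried a third extremity (Lemma~\ref{lem:alternated-paths-disjoint} then supplies a fresh leaf $e$ at its end) or if the center-center path contained an intermediate node, one obtains a new chordless $4$-cycle sharing an edge with the block, contradicting that every edge of a cactus lies on at most one cycle. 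The second species is the \emph{cut-vertex} star: since a cut-vertex $x$ must fan out, via single interior edges, to all the cycles through it, $x$ is forced to sit at a star-node's \emph{center}, with the leaf $x$ on the center and each of the $k$ extremities joined, extremity-to-extremity, to one quadrilateral pattern; a degree-$2$ vertex instead appears directly as a leaf on a quadrilateral extremity. Ruling out every other adjacency (a quadrilateral extremity meeting another quadrilateral star, or a cut-vertex extremity meeting a leaf or another cut-vertex star) reduces to the no-bridge and single-cycle properties, since each such configuration would create a bridge or a cycle of the wrong length.

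With this characterization in hand, the grammar is verified production by production, using the $\cramped{\cls[C]{Q}}/\cramped{\cls[X]{Q}}$ pair to record whether the traversal is already \emph{inside} a quadrilateral pattern or is \emph{entering} one from outside---the same state-tracking device used for $\cls{K}$ and $\cls{\bar{K}}$ in the ptolemaic grammar. Reaching a quadrilateral star at an extremity forces its center to its mate and its one remaining extremity to a leaf or a cut-vertex star, giving $\cramped{\cls[X]{Q}=\cls[C]{Q}\times(\clsAtom+\cls[X]{S})}$, while the mate, reached at its center, offers exactly two extremities of the same kind, giving $\cramped{\cls[C]{Q}=\Set[=2]{\clsAtom+\cls[X]{S}}}$. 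A cut-vertex star pins its center to the leaf and spreads its remaining extremities over quadrilateral patterns, giving $\cramped{\cls[X]{S}=\clsAtom\times\Set[\geqslant1]{\cls[X]{Q}}}$ and $\cramped{\cls[C]{S}=\Set[\geqslant2]{\cls[X]{Q}}}$, with the cardinality bounds enforced by reducedness (Theorem~\ref{thm:cunningham}). Finally the axiom $\cramped{\clsFCGrl=\clsAtom_{\mLeaf}\times(\cls[X]{Q}+\cls[C]{S})}$ records that the root vertex is either a degree-$2$ cycle-vertex hanging off a quadrilateral extremity or a cut-vertex at the center of a cut-vertex star. The main obstacle is precisely the rigidity claim of the second paragraph---that each $4$-cycle collapses to a single center-center edge between two degree-$3$ stars; once that is secured, the remaining steps parallel the block-graph proof and are routine.
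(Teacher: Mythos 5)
Your proposal follows essentially the same route as the paper's own sketch: rule out clique-nodes via triangle-freeness, split the star-nodes into center-adjacent ``quadrilateral'' pairs having exactly two extremities each versus regular stars whose centers sit on leaves and whose extremities meet quadrilateral patterns (justified by the bridge and $C_4$ lemmas), and then read off the productions with the $\cls[C]{Q}$/$\cls[X]{Q}$ state device. The only cosmetic differences are that you justify the two-extremity rigidity by exhibiting an edge lying on two $4$-cycles where the paper argues that the induced complete bipartite subgraph $K_{x_1,x_2}$ must be $K_{2,2}$, and that you invoke Lemma~\ref{lem:split-induced-clique} where the paper cites Lemma~\ref{lem:split-characterization-K4}---both at the same level of (sketch) rigor as the paper's proof.
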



\begin{proof}[Sketch of proof.]
  We first note that 4-cacti's property of not having any induced
  clique-nodes of 3 or more vertices, by
  Lemma~\ref{lem:split-characterization-K4}, translates to the split-decomposition tree
  of 4-cacti having no clique-nodes. Therefore, the only internal nodes to
  consider are star-nodes.

  Furthermore, by Lemma~\ref{lem:split-characterization-c4}, every
  $\cramped{C_4}$ in a 4-cactus corresponds to a center-center path in the
  split-decomposition tree. Since we already ruled out the existence of clique-nodes,
  the only possible center-center paths in the split-decomposition tree of 4-cacti are
  two star-nodes adjacent via their centers, corresponding to an induced
  $\cramped{C_4}$ in the accessibility graph. Along this line, we
  distinguish between two types of star-nodes, with $\cls{Q}$ (for
  quadrilateral) representing star-nodes with their center adjacent to
  another star-node (another $\cls{Q}$) and $\cls{S}$ representing all
  other star-nodes, which we refer to as \emph{regular} star-nodes.

  We next observe that the centers of all regular star-nodes must be
  attached to leaves. They cannot be attached to extremities of other star
  nodes as that would allow for a star-join operation, and they cannot be
  attached to centers of other star-nodes since otherwise, they would be
  considered quadrilateral star-nodes instead of regular ones.

  Additionally, we note that extremities of regular star-nodes must be
  attached to quadrilateral star-nodes. First, these extremities cannot be
  attached to centers of other star-nodes to avoid star-join operations.
  Furthermore, as we already established that the centers of these regular
  star-node are attached to leaves, having their extremities adjacent to
  leaves or extremities of other star-nodes would induce bridges in the
  accessibility graph by Lemma~\ref{lem:split-characterization-bridge} (a
  pendant bridge in the former case, and an internal bridge in latter).
  However, every edge in a 4-cactus graph belongs to a $\cramped{C_4}$ and
  thus cannot be a bridge.

  Finally, we show that every quadrilateral star-node must have exactly
  two extremities. This is because there are no edges in an accessibility
  graph between the leaves at the ends of maximal alternated paths out of
  any star-node, as an alternated path between two such leaves would
  require using two interior edges from that star-node. Therefore, if two
  adjacent quadrilateral star-nodes respectively have $\cramped{x_1}$ and
  $\cramped{x_2}$ extremities, then the corresponding leaves in their
  subtrees would induce a $\cramped{K_{x_1, x_2}}$ in the accessibility
  graph. We have $\cramped{x_1}, \cramped{x_2}\geq2$ since the split-decomposition tree
  is assumed to be reduced. In a 4-cactus graph, the only allowed complete
  bipartite induced subgraph, where each side of the bipartition has size
  at least 2, is a $\cramped{K_{2,2}}$, i.e. $\cramped{C_4}$, implying
  that every quadrilateral node must have exactly two extremities.
\end{proof}

\begin{theorem}
  The class $\clsFCG$ of unrooted 4-cactus graphs is specified by
  \begin{align}
    \clsFCG        &= \cls[Q]{T} +\cls[S]{T} + \cls[Q-Q]{T}%
                       - \cls[Q\rightarrow Q]{T} - \cls[Q-S]{T} \\[0.4em]
    \cls[Q]{T}       &= \cls[C]{Q}\times\cls[C]{Q}\\
    \cls[S]{T}       &= \clsAtom\times\cls[C]{S} \\
    \cls[Q-Q]{T}   &= \Set[2]{\cls[C]{Q}}\\
    \cls[Q\rightarrow Q]{T}
                   &= \cls[C]{Q}\times\cls[C]{Q}\\
    \cls[Q-S]{T}   &= \cls[X]{Q}\times\cls[X]{S}\\[0.4em]
    \cls[C]{Q}      &= \Set[=2]{\clsAtom+\cls[X]{S}}\\
    \cls[X]{Q}      &= \cls[C]{Q}\times(\clsAtom+\cls[X]{S})\\
    \cls[C]{S}      &= \Set[\geqslant2]{\cls[X]{Q}}\\
   \cls[X]{S}      &= \clsAtom\times\Set[\geqslant1]{\cls[X]{Q}}
  \end{align}
\end{theorem}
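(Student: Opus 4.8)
The plan is to apply the dissymmetry theorem exactly as in the proofs of Theorem~\ref{thm:unrooted-grammar-block} and Theorem~\ref{thm:unrooted-grammar-ptol}, reusing the subtree classes $\cls[C]{Q}$, $\cls[X]{Q}$, $\cls[C]{S}$, $\cls[X]{S}$ from the rooted grammar as given data. Starting from
\begin{align*}
  \clsFCG = {\clsFCG}_{\mNode} + {\clsFCG}_{\mEdge} - {\clsFCG}_{\mDEdge},
\end{align*}
I would first invoke Lemma~\ref{lem:no-leaves} to restrict attention to rootings at internal nodes and edges, and recall from the rooted-grammar proof that a 4-cactus split-decomposition tree has no clique-nodes---so its only internal nodes are quadrilateral star-nodes $\cls{Q}$ and regular star-nodes $\cls{S}$.

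Next I would enumerate the rooting cases. Rooting at an internal node yields ${\clsFCG}_{\mNode} = \cls[Q]{T} + \cls[S]{T}$. The only internal edges are the center-center edge inside a quadrilateral pair and the extremity-extremity edge joining a quadrilateral to a regular star-node, so ${\clsFCG}_{\mEdge} = \cls[Q-Q]{T} + \cls[Q-S]{T}$ and ${\clsFCG}_{\mDEdge} = \cls[Q\rightarrow Q]{T} + \cls[Q\rightarrow S]{T} + \cls[S\rightarrow Q]{T}$. Since $\cls{Q}$ and $\cls{S}$ nodes are distinguishable (a quadrilateral's center carries another star-node, a regular star's center carries a leaf), a $\cls{Q}$--$\cls{S}$ edge bears an implicit orientation, giving $\cls[Q\rightarrow S]{T} \simeq \cls[S\rightarrow Q]{T} \simeq \cls[Q-S]{T}$; substituting into the dissymmetry equation and cancelling one copy of $\cls[Q-S]{T}$ produces the stated simplified identity.

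Then I would verify the individual rerooted-tree equations. The regular-star equations ($\cls[S]{T} = \clsAtom\times\cls[C]{S}$, and the factor $\cls[X]{S}$ inside $\cls[Q-S]{T}$) follow verbatim from the block-graph argument, since the center of a regular star-node carries only a leaf and its extremities carry quadrilaterals. The delicate equations are the three quadrilateral ones, and here lies the main obstacle: because quadrilateral nodes appear \emph{only} in center-joined pairs, ``rooting at a quadrilateral node'' really distinguishes one node of such a pair, whose center necessarily carries its partner. This gives $\cls[Q]{T} = \cls[C]{Q}\times\cls[C]{Q}$, one factor supplying the two extremities of the rooted node and the other supplying the partner accessed through the shared center; the undirected center-center edge is symmetric, giving $\cls[Q-Q]{T} = \Set[2]{\cls[C]{Q}}$, while the directed edge orders the pair, giving $\cls[Q\rightarrow Q]{T} = \cls[C]{Q}\times\cls[C]{Q}$.

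The hard part will be confirming that these three quadrilateral terms combine correctly under the dissymmetry subtraction. I would check this by the centroid argument underlying the theorem (Drmota~\cite[\S 4.3.3]{HoECDrmota15}): each center-joined pair admits two node-rootings, one undirected edge-rooting, and two directed edge-rootings, so the combination $\cls[Q]{T} + \cls[Q-Q]{T} - \cls[Q\rightarrow Q]{T}$ contributes $2 + 1 - 2 = 1$ per pair, exactly once, as required for the unrooted count. With that verified, the remaining equations are routine and the specification follows.
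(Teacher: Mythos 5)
Your proposal is correct and follows exactly the dissymmetry-theorem template that the paper uses for the analogous unrooted grammars of block and ptolemaic graphs (Theorems~\ref{thm:unrooted-grammar-block} and~\ref{thm:unrooted-grammar-ptol}); the paper in fact states this particular theorem without proof, and your argument supplies precisely the omitted details---only $Q$ and $S$ internal nodes, only center-center $Q$--$Q$ and extremity-extremity $Q$--$S$ internal edges, and cancellation of one copy of $\cls[Q-S]{T}$ via the implicit orientation of a $Q$--$S$ edge. The only superfluous step is the closing per-pair check of $2+1-2=1$: once each rerooted class is correctly specified, the dissymmetry theorem already guarantees the identity globally, so no further verification is required.
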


\noindent The first few terms of the enumeration of unlabeled, unrooted
4-cactus graphs are available in the Table~\ref{tab:enum-cacti} at the end
of this paper.

\section{Proof of Lemmas~\ref{lem:alternated-paths},
  \ref{lem:alternated-paths-disjoint},
  and~\ref{lem:split-induced-clique}\label{app:proof-tree-lemmas}}

We here restate and provide the full proof of two straight-forward lemmas,
which 

\tmpsetcounter{lemma}{3}
\begin{lemma}
  Let $G$ be a totally decomposable graph with the reduced clique-star
  split-decomposition tree $T$, any maximal\footnote{A maximal alternated path is one
    that cannot be extended to include more edges while remaining
    alternated.} alternated path starting from any node in $V(T)$ ends in
  a leaf.
\end{lemma}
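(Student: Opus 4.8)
The plan is to proceed by contradiction and to reduce everything to one elementary fact: in a reduced clique-star tree, no label graph has an isolated marker vertex. Concretely, I would fix a starting node $x \in V(T)$, follow the maximal alternated path $P$ emanating from it, and suppose toward a contradiction that $P$ terminates at an internal node $v$ rather than at a leaf. Since $T$ is reduced, every internal node has degree at least three (Cunningham's theorem), so the label $G_v$ is a clique $K_k$ or a star $S_k$ with $k \geqslant 3$.

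The heart of the argument is the observation that every marker vertex of such a label has at least one neighbour in $G_v$. In a clique every vertex is adjacent to the remaining $k-1 \geqslant 2$ vertices; in a star the centre is adjacent to all $k-1 \geqslant 2$ extremities, while each extremity is adjacent to the centre. Hence $G_v$ contains no isolated vertices. If $P$ is the trivial one-node path sitting at $v$, then it can be extended along any of the (at least three) edges incident to $v$, since the alternation condition is vacuous for a single edge; this already contradicts maximality. Otherwise $P$ enters $v$ along some tree-edge $e = (z,v)$, and $\rho_v(e)$ has a neighbour $\rho_v(e')$ in $G_v$. Because $\rho_v$ is a bijection and $e$ is the unique edge joining $v$ to $z$, the edge $e' = (v,w)$ leads to a node $w \neq z$; appending $e'$ yields a strictly longer path that is still a path in the tree and still alternated, since $(\rho_v(e), \rho_v(e')) \in E(G_v)$. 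This contradicts the maximality of $P$, so $P$ must terminate at a leaf.

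I expect no real obstacle: the whole proof is a local check at the terminal internal node, and the only slightly delicate point is the star-extremity case, where the entering marker has a single neighbour (the centre). Even there the extension goes through, because that one neighbour corresponds to a genuinely different incident edge, so an alternated path entering a star at an extremity is always forced out through its centre (and conversely). It is worth recording that the argument never uses the clique-star restriction beyond naming the labels; it relies only on the degree-at-least-three condition and on label graphs having no isolated vertices. Since connected prime labels also have this property, the same proof covers reduced split-decomposition trees containing prime nodes, matching the remark that these elementary lemmas survive in that more general setting.
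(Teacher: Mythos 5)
Your proof is correct and follows essentially the same route as the paper's: assume the maximal alternated path ends at an internal node, observe that the entering marker vertex has a neighbour in the (clique or star) label, and extend the path through that neighbour along the corresponding tree edge to contradict maximality. The only differences are cosmetic --- you unify the paper's three-way case analysis (clique, star entered at centre, star entered at extremity) into the single observation that no label graph has an isolated marker vertex, and you explicitly handle the length-zero path, both of which are harmless refinements of the same argument.
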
 

\begin{proof}
  Let $P$ be a maximal alternated path of length $\ell$ (edges) originating
  from a node $u\in V(T)$, and suppose $P$ does not end in a leaf. Let $v$
  be the internal node that $P$ ends in, and let $x\in V(\cramped{G_v})$
  be the marker vertex attached to the edge in $P$ that enters $v$. Note
  that since $P$ is an alternated path, it can include at most one edge
  from $E(\cramped{G_v})$, so the part of $P$ going from $u$ to $x$ has
  length at least $\ell-1$ edges. Depending on the structure of
  $\cramped{G_v}$, we have the following cases:
  \begin{itemize}[noitemsep, nosep]
  \item \emph{$\cramped{G_v}$ is a clique-node}.\ \ %
    Since $T$ is reduced, $v$ has degree at least 3. Therefore, $x$ has at
    least two neighbors in $V(\cramped{G_v})$. Let $y$ be one of these
    neighbors, and let $\cramped{\rho_y}$ connect $y$ to $z$ ($z$ is
    either a leaf or a marker vertex in a different internal node than
    $v$). We can construct a new path $P'$ of length $\ell+1$ from $P$ by
    cutting off the part of $P$ that follows $x$ and adding $(x,y)$ and
    $(y,z)$ to the end of $P$, contradicting the maximality of $P$.
  \item \emph{$\cramped{G_v}$ is a star-node and $x$ is its center}.\ \ %
    Similarly to the previous case, $\cramped{G_v}$ has degree at least 3
    and thus at least two extremities. Let $y\in V(\cramped{G_v})$ be one
    of these extremities and repeat the same argument in the previous
    case.
  \item \emph{$\cramped{G_v}$ is a star-node and $x$ is one of its
      extremities}.\ \ %
    Let $y\in V(\cramped{G_v})$ be the center of the star-node $v$, and
    the same argument given for the previous two cases applies.
  \end{itemize}
  
\end{proof}

\begin{lemma}
  Let $G$ be a totally decomposable graph with the reduced clique-star
  split-decomposition tree $T$ and let $u\in V(T)$ be an internal node. Any two maximal
  alternated paths $P$ and $Q$ that start at distinct marker vertices of
  $u$ but contain no interior edges from $\cramped{G_u}$ end at distinct
  leaves.
\end{lemma}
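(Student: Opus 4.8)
The plan is to reduce the claim to an elementary property of trees, using the preceding lemma to dispose of the endpoints. First I would invoke Lemma~\ref{lem:alternated-paths}: since $P$ and $Q$ are maximal alternated paths starting from (a marker vertex of) the node $u$, each of them must terminate in a leaf of $T$; say $P$ ends at $\ell_P$ and $Q$ ends at $\ell_Q$. It then remains only to show $\ell_P\neq\ell_Q$, so the entire content of the lemma is the distinctness of these two leaves.

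The heart of the argument is the bijection $\rho_u$ between the marker vertices of $\cramped{G_u}$ and the tree edges incident to $u$. Let $m_P$ and $m_Q$ be the distinct marker vertices at which $P$ and $Q$ start, and set $e_P=\rho_u^{-1}(m_P)$ and $e_Q=\rho_u^{-1}(m_Q)$; because $\rho_u$ is a bijection and $m_P\neq m_Q$, these are two \emph{distinct} tree edges incident to $u$. The hypothesis that $P$ (resp.\ $Q$) contains no interior edge of $\cramped{G_u}$ is exactly what forces its first tree edge to be $e_P$ (resp.\ $e_Q$) and prevents it from ever passing back through $u$ along a second incident edge: by definition of an alternated path, traversing $u$ through two incident edges $e,e'$ is precisely the use of the interior edge $(\rho_u(e),\rho_u(e'))$ of $\cramped{G_u}$, which is forbidden here.

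I would then appeal to the tree structure of $T$. Removing the node $u$ splits $T$ into connected components that are in bijection with the edges incident to $u$; in particular the far endpoint of $e_P$ lies in one component $C_P$ and the far endpoint of $e_Q$ lies in a \emph{distinct} component $C_Q$. Since each path leaves $u$ through its own incident edge and, as argued above, never returns to $u$, the whole of $P$ (after $u$) is confined to $C_P$ and the whole of $Q$ to $C_Q$; hence $\ell_P\in C_P$ and $\ell_Q\in C_Q$. As $C_P$ and $C_Q$ are vertex-disjoint, $\ell_P\neq\ell_Q$, which is the claim.

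There is no deep obstacle here beyond careful bookkeeping; the one step that requires attention is the translation in the second paragraph, namely recognizing that ``$P$ contains no interior edge of $\cramped{G_u}$'' is equivalent to ``$P$ meets $u$ through at most a single incident tree edge.'' Once that equivalence is made explicit, the conclusion is immediate from the fact that $T$ is a tree, and I note that the argument never uses whether the label at $u$ is a clique or a star — consistent with the remark that these elementary lemmas remain valid even in the presence of prime nodes.
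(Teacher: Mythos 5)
Your proof is correct and takes essentially the same route as the paper's: both arguments reduce to the fact that the two paths exit $u$ through distinct incident tree edges (via the bijection $\rho_u$ and the no-interior-edge hypothesis) and can never meet again because $T$ is a tree. The paper phrases this as a cycle contradiction at the first common node, while you phrase it via the components of $T$ after deleting $u$; the difference is purely presentational.
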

\begin{proof}
  Let $P$ and $Q$ be two such maximal alternated paths, starting at marker
  vertices $p,q\in V(\cramped{G_u})$ respectively. We will show that $P$
  and $Q$ are disjoint, and the result follows.

  Note that $P$ and $Q$ are indeed disjoint in $\cramped{G_u}$, since they
  begin at different marker vertices and leave $u$ immediately from there.
  Suppose $P$ and $Q$ are not disjoint, and let $v\in V(T)$ be the first
  such common node encountered when tracing $P$ and $Q$ out of $u$. Since
  $P$ and $Q$ are disjoint before reaching $v$, the part of $P$ and $Q$
  between $u$ and $v$ form a simple cycle in $T$, which cannot happen in a
  tree. 
  Therefore, no such common node $v$ can exist, implying $P\cap Q = \{\}$.

\end{proof}

\begin{lemma}
  Let $G$ be a totally decomposable graph with the reduced clique-star
  split-decomposition tree $T$. If $T$ has a clique-node of degree $n$,
  then $G$ has a corresponding induced clique on (at least) $n$ vertices.
\end{lemma}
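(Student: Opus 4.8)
The plan is to exhibit $n$ distinct leaves of $T$ that are pairwise adjacent in $G$, one emanating from each marker vertex of the clique-node. First I would fix a clique-node $v \in V(T)$ of degree $n$, so that its graph label $G_v$ is the complete graph $K_n$ on marker vertices $m_1, \dots, m_n$, and let $e_i$ denote the tree-edge incident to $v$ with $\rho_v(e_i) = m_i$. From each $m_i$ I would trace a maximal alternated path $P_i$ that begins with the edge $e_i$ and uses no interior edge of $G_v$ (so it leaves $v$ immediately into the corresponding subtree). By Lemma~\ref{lem:alternated-paths}, each such $P_i$ terminates at a leaf $\ell_i$, and by Lemma~\ref{lem:alternated-paths-disjoint} the leaves $\ell_1, \dots, \ell_n$ are pairwise distinct.

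The core step is to show that any two of these leaves are adjacent in the accessibility graph $G$. Fix $i \neq j$ and form the path in $T$ that runs from $\ell_i$ back along $P_i$ to the edge $e_i$ at $v$, crosses $v$ to the edge $e_j$, and then follows $P_j$ out to $\ell_j$. I would argue this path is alternated: along the portions inside $P_i$ and $P_j$ the alternation condition already holds by construction, and the only new junction to verify is at $v$ itself, where the two incident edges are $e_i$ and $e_j$; the alternation condition there requires $(\rho_v(e_i), \rho_v(e_j)) = (m_i, m_j) \in E(G_v)$, which holds precisely because $G_v = K_n$ is a clique. Hence $(\ell_i, \ell_j) \in E(G)$.

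Since this argument applies to every pair $i \neq j$, the $n$ distinct leaves $\ell_1, \dots, \ell_n$ are pairwise adjacent and therefore induce a clique on $n$ vertices in $G$ (and possibly more, if additional vertices happen to be adjacent to all of them, which accounts for the ``at least'' in the statement). The main obstacle, such as it is, lies in the careful bookkeeping of the alternation condition at the splice point $v$: one must confirm that concatenating two separately-alternated paths through a single marker-to-marker clique edge does not secretly force a second interior edge of $G_v$ onto the path, which is exactly what the hypothesis that $P_i$ and $P_j$ avoid interior edges of $G_v$ rules out. Everything else reduces to the two elementary tree lemmas already established.
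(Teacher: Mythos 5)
Your proposal is correct and follows essentially the same route as the paper's proof: fix a maximal alternated path out of each marker vertex of the clique-node avoiding interior edges of its label, invoke Lemma~\ref{lem:alternated-paths} and Lemma~\ref{lem:alternated-paths-disjoint} to get $n$ distinct leaf endpoints, and splice any two such paths through the corresponding interior clique edge to establish pairwise adjacency. The only difference is presentational---you spell out the alternation check at the splice point in more detail than the paper does.
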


\begin{proof}
  Let $u\in V(T)$ be a clique-node of degree $n$. For every marker vertex
  $\cramped{v_i}\in V(\cramped{G_u})$, $i = 1\dots n$, fix some maximal
  alternated path $\cramped{P_{v_i}}$ that starts at the marker vertex
  $\cramped{v_i}$, uses no interior edges of $\cramped{G_u}$, and ends at
  leaf $\cramped{a_i}$. Note that the $\cramped{a_i}$ are all distinct, by
  Lemma~\ref{lem:alternated-paths-disjoint}, and pairwise adjacent, since
  every pair of leaves $\cramped{a_i}, \cramped{a_j}$ are connected in $T$
  via the alternated path consisting of $\cramped{P_{v_i}}$,
  $\cramped{P_{v_j}}$, and the edge
  $(\cramped{v_i},\cramped{v_j})\in E(\cramped{G_u})$, thus inducing a
  clique of size (at least\footnote{Note that this clique of size $n$
    might be part of a larger clique, as illustrated in
    Figure~\ref{fig:clique-node-not-clique}.}) $n$ in $G$.
  \vspace{4em}
  
  \begin{figure}[h!]
    \centering
    \includegraphics[scale=0.5]{./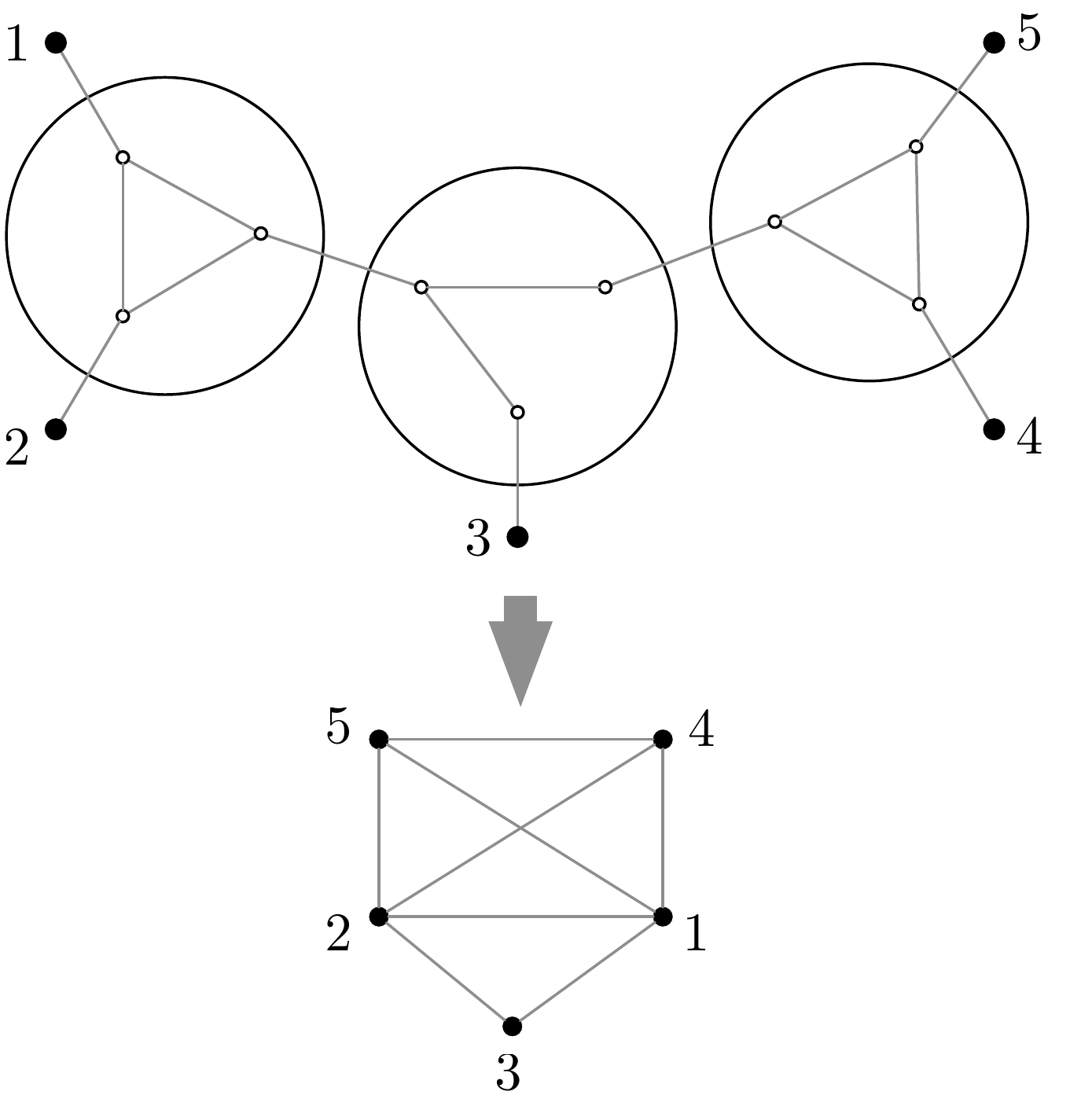}
    \caption{\label{fig:clique-node-not-clique} %
      While the two clique-nodes of size 3 guarantee the presence of two
      corresponding induced cliques (one involving vertices 1 and 2, the
      other involving vertices 4 and 5), they do not allow us to rule out
      the existence of larger clique. This illustrates that, unlike many
      of our lemmas, the property presented in
      Lemma~\ref{lem:split-induced-clique} is not bijective, and only
      works in one direction.}
  \end{figure}
\end{proof}

\tmprestorecounter{lemma}


\begin{table*}
  \centering
  \def\arraystretch{1.5}
  \begin{tabularx}{.9\textwidth}{c c c c W}
    \toprule
    \textbf{Graph Class}&\textbf{Rooted}&\textbf{Labeled}&\textbf{EIS}&\textbf{Enumeration}\\
    
    \midrule

    Block graphs &{\checkmark} &{\checkmark} &{\textbf{A035051}} &{1, 2, 12,
      116, 1555, 26682, 558215, 13781448, 392209380, 12641850510,
      455198725025, 18109373455164, 788854833679549, \dots}\\

    Block graphs &{} &{\checkmark} &{\textbf{A030019}} &{1, 1, 4, 29, 311,
      4447, 79745, 1722681, 43578820, 1264185051, 41381702275, 1509114454597,
      60681141052273,\dots}\\

    Block graphs &{\checkmark} &{} &{\textbf{A007563}} &{1, 1, 3, 8, 25, 77,
      258, 871, 3049, 10834, 39207, 143609, 532193, 1990163, 7503471,
      28486071, 108809503, 417862340,\dots}\\

    Block graphs &{} &{} &{\textbf{A035053}} &{1, 1, 2, 4, 9, 22, 59, 165,
      496, 1540, 4960, 16390, 55408, 190572, 665699, 2354932, 8424025,
      30424768, 110823984,\dots}\\
      
    \midrule

    Ptolemaic graphs &{\checkmark} &{\checkmark} &{} &{1, 2, 12, 140, 2405,
      54252, 1512539, 50168456, 1928240622, 84240029730, 4121792058791,
      223248397559376, \dots}\\

    Ptolemaic graphs &{} &{\checkmark} &{} &{1, 1, 4, 35, 481, 9042, 216077,
      6271057, 214248958, 8424002973, 374708368981, 18604033129948,
      1019915376831963, \dots}\\

    Ptolemaic graphs &{\checkmark} &{} &{}  &{1, 1, 3, 10, 40, 168, 764,
      3589, 17460, 86858, 440507, 2267491, 11819232, 62250491, 330794053,
      1771283115, 9547905381, \dots}\\
 
     Ptolemaic graphs &{} &{} &{} &{1, 1, 2, 5, 14, 47, 170, 676, 2834,
       12471, 56675, 264906,1264851, 6150187, 30357300, 151798497,
       767573729, 3919462385, \dots}\\
    
    \bottomrule
  \end{tabularx}
  \caption{\label{tab:enum-block-ptol}%
    The first few terms of the enumerations of ptolemaic and block graphs.}
\end{table*}

\begin{table*}
  \centering
  \def\arraystretch{1.5}
  \begin{tabularx}{.9\textwidth}{c c c c W}
    \toprule
    \textbf{Graph Class}&\textbf{Rooted}&\textbf{Labeled}&\textbf{EIS}&\textbf{Enumeration}\\
    \midrule

    2,3-cactus graphs &{\checkmark} &{\checkmark} &{\textbf{A091481}} &{1,
      2, 12, 112, 1450, 23976, 482944, 11472896, 314061948, 9734500000,
      336998573296,12888244482048, \dots}\\

    2,3-cactus graphs &{} &{\checkmark} &{\textbf{A091485}} &{1, 1, 4,
      28, 290, 3996, 68992, 1434112, 34895772, 973450000, 30636233936,
      1074020373504, 41510792057176, \dots}\\

    2,3-cactus graphs &{\checkmark} &{} &{\textbf{A091486}} &{1, 1, 3, 7,
      21, 60, 190, 600, 1977, 6589, 22408, 77050, 268178, 941599, 3333585,
      11882427, 42615480,153653039, \dots}\\

    2,3-cactus graphs &{} &{} &{\textbf{A091487}} &{1, 1, 2, 3, 7, 16,
      41, 106, 304, 880, 2674, 8284, 26347, 85076, 279324, 928043,
      3118915, 10580145, 36199094, 124774041, \dots}\\ 

    \midrule

    3-cactus graphs &{\checkmark} &{\checkmark} &{\textbf{A034940}} &{0,
      0, 3, 0, 75, 0, 5145, 0, 688905, 0, 152193195, 0, 50174679555, 0,
      23089081640625, 0, \dots}\\
    
    3-cactus graphs &{} &{\checkmark} &{\textbf{A034941}} &{0, 0, 1, 0,
      15, 0, 735, 0, 76545, 0, 13835745, 0, 3859590735, 0, 1539272109375,
      0, 831766748637825, 0, \dots}\\

    3-cactus graphs &{\checkmark} &{} &{\textbf{A003080}} &{0, 0, 1, 0,
      2, 0, 5, 0, 13, 0, 37, 0, 111, 0, 345, 0, 1105, 0, 3624, 0, 12099,
      0, 41000, 0, 140647, 0, 487440, 0, 1704115, 0, \dots}\\
    
    3-cactus graphs &{} &{} &{\textbf{A003081}} &{0, 0, 1, 0, 1, 0, 2,
      0, 4, 0, 8, 0, 19, 0, 48, 0, 126, 0, 355, 0, 1037, 0, 3124, 0,
      9676, 0, 30604, 0, 98473, 0, 321572, 0, 1063146, 0, \dots}\\   

    \midrule

    4-cactus graphs &{\checkmark} &{\checkmark} &{} &{0, 0, 0, 12, 0, 0,
      4410, 0, 0, 7560000, 0, 0, 35626991400, 0, 0, 357082280755200, 0, 0,
      6536573599765809600, 0, 0, \dots}\\

    4-cactus graphs &{} &{\checkmark} &{} &{0, 0, 0, 3, 0, 0, 630, 0, 0,
      756000, 0, 0, 2740537800, 0, 0, 22317642547200, 0, 0,
      344030189461358400, 0, 0, \dots}\\

    4-cactus graphs &{\checkmark} &{} &{} &{0, 0, 0, 1, 0, 0, 3, 0, 0,
      11, 0, 0, 46, 0, 0, 208, 0, 0, 1002, 0, 0, 5012, 0, 0, 25863, 0,
      0, 136519, 0, 0, 733902, 0, 0, \dots}\\

    4-cactus graphs &{} &{} &{} &{0, 0, 0, 1, 0, 0, 1, 0, 0, 3, 0, 0, 7,
      0, 0, 25, 0, 0, 88, 0, 0, 366, 0, 0, 1583, 0, 0, 7336, 0, 0,
      34982, 0, 0, 172384, 0, 0, \dots}\\ 
    
    \bottomrule
  \end{tabularx}
  \caption{\label{tab:enum-cacti}%
    The first few terms of the enumerations of some subclasses of cactus
    graphs studied in this paper. The zero terms in the enumeration of
    3-cacti and 4-cacti are due to the fact that the number of vertices in
    a 3-cactus graph must be odd, while the number of vertices in a
    4-cactus graph must have a remainder of 1 modulus 3. Note that the EIS
    sequence for 3-cacti lists the enumeration for graphs of odd size
    only, thus omitting the zero terms.}
\end{table*}

\end{document}